\newcommand{\redcolor}[1]{#1}
\journal{Linear Algebra and its Applications}
\newtheorem{theorem}{Theorem}[section] 
\newtheorem{algorithm}{Algorithm}[section] 
\newtheorem{proposition}[theorem]{Proposition}
\newtheorem{lemma}[theorem]{Lemma}
\newtheorem{corollary}[theorem]{Corollary}
\begin{document}

\begin{frontmatter}




\title{The Complexity of Primal-Dual Fixed Point Methods for Ridge Regression\tnoteref{ftpaper}}
\tnotetext[ftpaper]{The results of this paper were obtained between October 2014 and March 2015, 
during AR's affiliation with the University of Edinburgh. First version of this paper was available on August 31, 2017. This revision: January 2018.}

\author[UFPR]{{Ademir Alves Ribeiro}\corref{cor1}\fnref{cnpq}}
\ead{ademir.ribeiro@ufpr.br}

\author[UoE]{{Peter Richt\'arik}\fnref{epsrc}}
\ead{peter.richtarik@ed.ac.uk}

\fntext[cnpq]{Supported by CNPq, Brazil, Grants 201085/2014-3 and 309437/2016-4.}
\fntext[epsrc]{Supported by the EPSRC Grant EP/K02325X/1, ``Accelerated 
Coordinate Descent Methods for Big Data Optimization''.}

\cortext[cor1]{Corresponding Author}
\address[UFPR]{Department of Mathematics, Federal University of Paran\'a, CP 19081, 
81531-980, Curitiba, PR, Brazil}
\address[UoE]{School of Mathematics, University of Edinburgh, United Kingdom}

\begin{abstract}
	
\end{abstract}

\begin{keyword}
Unconstrained minimization \sep primal-dual methods \sep ridge regression \sep fixed-point methods.


\MSC 65K05 \sep 49M37 \sep 90C30
\end{keyword}

\end{frontmatter}



\section{Introduction}
\label{sec_intro}
Given matrices $A_1,\dots, A_n \in \mathbb{R}^{d\times m}$ encoding $n$ observations (examples), and vectors 
$y_1,\dots,y_n \in \mathbb{R}^m$ encoding associated responses (labels), one is often interested in finding a 
vector $w\in \mathbb{R}^d$ such that, in some precise sense, the product $A_i^T w$ is a good approximation of 
$y_i$  for all $i$. A fundamental approach  to this problem, used in all areas of computational practice, 
is to formulate the problem as an $L_2$-regularized least-squares problem, also known 
as {\em ridge regression}. 
In particular, we consider the {\em primal} ridge regression problem
\begin{equation}
\label{primal_ridge}
\min_{w\in\mathbb{R}^d} P(w) \stackrel{\rm def}{=} 
\frac{1}{2n} \sum_{i=1}^n \|A_i^T w - y_i\|^2 + \frac{\lambda}{2}\|w\|^2 =  
\frac{1}{2n}\|A^Tw-y\|^2+\frac{\lambda}{2}\|w\|^2,
\end{equation}
where $\lambda>0$ is a regularization parameter, $\|\cdot\|$ denotes the standard Euclidean norm. 
In the second and more concise expression we have concatenated the observation matrices and response 
vectors to form a single observation matrix 
$A=[A_1, A_2,  \cdots, A_n]\in\mathbb{R}^{d\times N}$ and a single response vector 
$y=(y_1, y_2, \cdots, y_n) \in\mathbb{R}^{N}$, where  $N=nm$.

With each observation $(A_i,y_i)$ we now associate a dual variable, $\alpha_i\in \mathbb{R}^m$. The 
Fenchel {\em dual} of \eqref{primal_ridge} is also a ridge regression problem:
\begin{equation}
\label{dual_ridge}
\max_{\alpha  \in\mathbb{R}^{N}} D(\alpha)\stackrel{\rm def}{=}
-\frac{1}{2\lambda n^2}\|A\alpha\|^2+\frac{1}{n}\alpha^T y -\frac{1}{2n}\|\alpha\|^2,\end{equation}
where $\alpha = (\alpha_1,\alpha_2,\dots,\alpha_n)\in \mathbb{R}^{N}$.

\medskip
{\noindent\bf Optimality conditions.} The starting point of this work is the observation that 
the {\em optimality conditions} for the primal and dual ridge  regression  problems can be written 
in several {\em different} ways, in the form of a linear system involving the primal and dual variables. 
In particular, we find several different matrix-vector pairs $(M,b)$, where $M\in\mathbb{R}^{(d+N)\times(d+N)}$ 
and $b \in \mathbb{R}^{d+N}$, such that the optimality conditions can be expressed in the form of a 
linear system as
\begin{equation}
\label{eq:fixed_point_system} 
x = Mx + b,
\end{equation}
where $x = (w,\alpha)\in \mathbb{R}^{d+N}$. 

\medskip
{\noindent\bf Fixed point methods.} With each system \eqref{eq:fixed_point_system} one can naturally 
associate a fixed point method performing the iteration $x^{k+1} = M x^k + b$.  However, unless the 
spectrum of $M$ is contained in the unit circle, such a method will not converge \cite{Saad}. To 
overcome this drawback, we utilize the idea of {\em relaxation}. In particular, we pick a relaxation 
parameter $\theta\neq 0$ and replace \eqref{eq:fixed_point_system} with the equivalent system
$$
x=G_{\theta} x + b_{\theta},
$$ 
where $G_{\theta} = (1-\theta) I + \theta M$ and $b_{\theta} = \theta b$.  The choice $\theta=1$ 
recovers \eqref{eq:fixed_point_system}. We then study the convergence of 
the {\em primal-dual fixed point methods} \[x^{k+1} = G_{\theta} x^k + b_{\theta}\] through a careful 
study of the spectra of the iteration matrices $G_{\theta}$.

Our work starts with the following observation:
While all these formulations are necessarily algebraically equivalent, they give rise to different  fixed-point  algorithms, with different  convergence properties. 

\subsection{Contributions and literature review}
It is well known that the role of duality in optimization and machine learning is very important, not only from the 
theoretical point of view but also computationally \cite{Shalev-Shwartz-Zhang13,Shalev-Shwartz-Zhang,Zhang}. 

However, a more recent idea that has generated many contributions is the usage of the primal and dual problems together. 
Primal-dual methods have been employed in convex optimization problems where strong duality holds, obtaining success 
when applied to several types of nonlinear and nonsmooth functions that arise in various application fields, such 
as image processing, machine learning, inverse problems, among others \cite{Chambolle-Pock,Komodakis,Quartz}.

On the other hand, fixed-point-type algorithms are classical tools for solving some structured linear systems. 
In particular, we have the iterative schemes developed by the mathematical economists Arrow, Hurwicz and Uzawa for 
solving saddle point problems \cite{Arrow-Hurwicz,Uzawa}.

In this paper we develop several primal-dual fixed point methods for the Ridge Regression problem. 
Ridge regression was introduced by Hoerl and Kennard \cite{Hoerl62,Hoerl} as a regularization method 
for solving least squares problems with highly correlated predictors. The goal is to reduce the standard 
errors of regression coefficients by imposing a penalty, in the $L_2$ norm, on their size. 

Since then, numerous papers were devoted to the study of ridge regression or even for solving problems with 
a general formulation in which ridge regression is a particular case. Some of these works have considered its 
dual formulation, proposing deterministic and stochastic algorithms that can be applied to the dual problem 
\cite{Dereny,Hawkins,Saunders,Shalev-Shwartz-Zhang13,Vinod,Zhang}.

To the best of our knowledge, the only work that considers a primal-dual fixed point approach to deal 
with ridge regression is \cite{Silva-Ribeiro-Pericaro}, where the authors deal with ill-conditioned problems. 
They present an algorithm based on the gradient method and an accelerated version of this algorithm. 

Here we propose methods based on the optimality conditions for the problem of minimizing the duality gap 
between the ridge regression problems \eqref{primal_ridge} and \eqref{dual_ridge} in different and equivalent 
ways by means of linear systems involving structured matrices. We also study the complexity of the proposed 
methods and prove that our main method achieves the optimal accelerated Nesterov rate.
\redcolor{This theoretical property is supported by numerical experiments indicating that our main 
algorithm is competitive with the conjugate gradient method.}

\subsection{Outline}
In Section \ref{sec_optim} we formulate the optimality conditions for the problem of minimizing the duality gap 
between \eqref{primal_ridge} and \eqref{dual_ridge} in two different, but equivalent, ways by means of linear 
systems involving structured matrices. 
We also establish the duality relationship between the problems \eqref{primal_ridge} and 
\eqref{dual_ridge}. In Section \ref{sec_mfp} we describe a family of (parameterized) fixed point 
methods applied to the reformulations for the optimality conditions. We present the convergence 
analysis and complexity results for these methods. Section \ref{sec_qtz} brings the main 
contribution of this work, with an accelerated version of the methods  described in 
Section \ref{sec_mfp}. In Section \ref{sec_qtze} we discuss some variants of our accelerated algorithm. 
In Section \ref{sec_num} we perform some numerical experiments. Finally, concluding remarks close our text in 
Section \ref{sec_concl}.

\section{Separable and Coupled Optimality Conditions}
\label{sec_optim}
Defining $x = (w,\alpha)\in \mathbb{R}^{d+N}$, our primal-dual problem consists 
of minimizing the duality gap between the problems \eqref{primal_ridge} and \eqref{dual_ridge}, that is  
\begin{equation}
\label{pd_prob}
\min_{x\in\mathbb{R}^{d+N}} f(x)\stackrel{\rm def}{=}P(w)-D(\alpha).
\end{equation}
This is a quadratic strongly convex problem and therefore admits a unique global solution $x^*\in\mathbb{R}^{d+N}$.

\subsection{A separable system}
Note that $\nabla f(x)=\left(\begin{array}{r}\nabla P(w) \\ -\nabla D(\alpha)\end{array}\right)$, where 
\begin{equation}
\label{gradPD}
\nabla P(w)=\frac{1}{n}A(A^Tw-y)+\lambda w
\quad\mbox{and}\quad
\nabla D(\alpha)=-\frac{1}{\lambda n^2}A^TA\alpha-\frac{1}{n}\alpha+\frac{1}{n}y.
\end{equation}
So, the first and natural way of writing the optimality conditions for problem \eqref{pd_prob} is 
just to set the expressions given in \eqref{gradPD} equal to zero, which can be written as 
\begin{equation}
\label{optM1}
\boxed{
\left(\begin{array}{r} w \\ \alpha \end{array}\right)=-\frac{1}{\lambda n}
\left(\begin{array}{cc} AA^T & 0 \\ 0 & A^TA\end{array}\right)
\left(\begin{array}{r} w \\ \alpha \end{array}\right)+\frac{1}{\lambda n}
\left(\begin{array}{c} Ay \\ \lambda ny \end{array}\right).
}
\end{equation}

\subsection{A coupled system}
In order to derive the duality between \eqref{primal_ridge} and \eqref{dual_ridge}, as well as 
to reformulate the optimality conditions for problem \eqref{pd_prob}, note that 
\begin{equation}
\label{primal_fc}
P(w)=\frac{1}{n}\sum_{i=1}^{n}\phi_i(A_i^Tw)+\lambda g(w),
\end{equation}
where $\phi_i(z)=\frac{1}{2}\|z-y_i\|^2$ and $g(w)=\frac{1}{2}\|w\|^2$. 

Now, recall that the Fenchel conjugate of a convex function $\xi:\mathbb{R}^l\to\mathbb{R}$ is 
$\xi^*:\mathbb{R}^l\to\mathbb{R}\cup\{\infty\}$ defined by 
$$
\xi^*(u)\stackrel{\rm def}{=}\sup_{s\in\mathbb{R}^l}\{s^Tu-\xi(s)\}.
$$
Note that if $\xi$ is strongly convex, then $\xi^*(u)<\infty$ for all $u\in\mathbb{R}^l$. 
Indeed, in this case $\xi$ is bounded below by a strongly convex quadratic function, 
implying that the ``$\sup$'' above is in fact a ``$\max$''. 

It is easily seen that 
$\phi_i^*(s)=\frac{1}{2}\|s\|^2+s^Ty_i$ and $ g^*(u)=\frac{1}{2}\|u\|^2$. 
Furthermore, we have 
\begin{equation}
\label{dual_fc}
D(\alpha)=-\lambda g^*\left(\frac{1}{\lambda n}\sum_{i=1}^{n}A_i\alpha_i\right)
-\frac{1}{n}\sum_{i=1}^{n}\phi_i^*(-\alpha_i). 
\end{equation}
If we write 
\begin{equation}
\label{alphabar}
\bar\alpha \stackrel{\rm def}{=} \frac{1}{\lambda n} A \alpha = \frac{1}{\lambda n}\sum_{i=1}^{n}A_i\alpha_i,
\end{equation}
the duality gap can be written as 
$$
P(w)-D(\alpha)=\lambda\big(g(w)+g^*(\bar\alpha)-w^T\bar\alpha\big)+
\frac{1}{n}\sum_{i=1}^{n}\Big(\phi_i(A_i^Tw)+\phi_i^*(-\alpha_i)+\alpha_i^TA_i^Tw\Big)
$$
and the weak duality follows immediately from the fact that 
$$
g(w)+g^*(\bar\alpha)-w^T\bar\alpha\geq 0
\quad\mbox{and}\quad
\phi_i(A_i^Tw)+\phi_i^*(-\alpha_i)+\alpha_i^TA_i^Tw\geq 0.
$$
Strong duality occurs when these quantities vanish, which is precisely the same as 
$
w=\nabla g^*(\bar\alpha)$
and $
\alpha_i=-\nabla\phi_i(A_i^Tw)
$,
or, equivalently, 
$
\bar{\alpha} = \nabla g(w)$ and $A_i^T w = \nabla \phi_i^*(-\alpha_i).
$ Therefore, another way to see the optimality conditions 
for problem \eqref{pd_prob} is by the relations 
\begin{equation}
\label{optM2w}
w=\bar\alpha=\frac{1}{\lambda n}A\alpha\quad\mbox{and}\quad \alpha=y-A^Tw.
\end{equation}
This is equivalent to 
\begin{equation}
\label{optM2}
\boxed{
\left(\begin{array}{r} w \\ \alpha \end{array}\right)=-\frac{1}{\lambda n}
\left(\begin{array}{cr} 0 & -A \\ \lambda nA^T & 0 \end{array}\right)
\left(\begin{array}{r} w \\ \alpha \end{array}\right)+
\left(\begin{array}{c} 0 \\ y \end{array}\right).
}
\end{equation}

\subsection{Compact form}
Both reformulations of the optimality conditions, \eqref{optM1} and \eqref{optM2}, can be viewed 
in the compact form 
\begin{equation}
\label{optimq}
x=Mx+b,
\end{equation}
for some $M\in\mathbb{R}^{(d+N)\times(d+N)}$ and $b\in\mathbb{R}^{d+N}$. Let us denote 
\begin{equation}
\label{M1_M2}
M_1=-\frac{1}{\lambda n}\left(\begin{array}{cc} AA^T & 0 \\ 0 & A^TA\end{array}\right)
\quad\mbox{and}\quad
M_2=-\frac{1}{\lambda n}\left(\begin{array}{cr} 0 & -A \\ \lambda nA^T & 0 \end{array}\right)
\end{equation}
the matrices associated with the optimality conditions formulated as \eqref{optM1} and 
\eqref{optM2}, respectively.
Also, let
\begin{equation}
\label{b1_b2}
b_1 = \frac{1}{\lambda n}\left(\begin{array}{c} Ay \\ \lambda ny \end{array}\right)
\quad\mbox{and}\quad
b_2=\left(\begin{array}{c} 0 \\ y   \end{array}\right).
\end{equation}
Thus, we can \redcolor{rewrite} \eqref{optM1} and \eqref{optM2} as 
\begin{equation}
\label{mfp1_mfp2}
x=M_1x+b_1\quad\mbox{and}\quad x=M_2x+b_2,
\end{equation}
respectively.

\section{Primal-Dual Fixed Point Methods}
\label{sec_mfp}
A method that arises immediately from the relation \eqref{optimq} is given by the scheme 
$$
x^{k+1}=Mx^k+b.
$$
However, unless the spectrum of $M$ is contained in the unit circle, this scheme will not 
converge. To overcome this drawback, we utilize the idea of {\em relaxation}. More precisely, 
we consider a relaxation parameter $\theta\neq 0$ and replace \eqref{optimq} with the equivalent system
$$
x=(1-\theta)x+\theta(Mx+b).
$$ 
Note that the choice $\theta=1$ recovers \eqref{optimq}.

The proposed algorithm is then given by the following framework.

\begin{center}
\boxed{
\begin{minipage}{10.5cm}
\begin{algorithm} {\bf Primal-Dual Fixed Point Method}
\label{alg_mfp}
\end{algorithm}
\begin{tabbing}
\hspace*{7mm}\= \hspace*{7mm}\= \hspace*{7mm}\= \hspace*{7mm}\=
\hspace*{7mm}\= \hspace*{7mm}\= \kill
{\sc input}: matrix $M\in\mathbb{R}^{(d+N)\times(d+N)}$, vector $b\in\mathbb{R}^{d+N}$,
parameter $\theta>0$ \\
{\sc starting point}: $x^0\in\mathbb{R}^{d+N}$ \\ 
{\sc repeat for $k=0,1,2,\dots$}
\+ \\
{\sc set} $x^{k+1}=(1-\theta)x^k+\theta(Mx^k+b)$ 
\end{tabbing}
\end{minipage}
}
\end{center}
As we shall see later, the use of the relaxation parameter $\theta$ enables us to prove 
convergence of Algorithm \ref{alg_mfp} with $M=M_1$ and $b=b_1$ or $M=M_2$ and $b=b_2$, chosen according 
to \eqref{M1_M2} and \eqref{b1_b2}, independent of the spectral radius of these matrices. 

Let us denote 
\begin{equation}
\label{G_theta}
G(\theta)=(1-\theta)I+\theta M
\end{equation}
and let $x^*$ be the solution of the problem \eqref{pd_prob}. 
Then $x^*=Mx^*+b$ with $M=M_1$ and $b=b_1$ or $M=M_2$ and $b=b_2$. Therefore, 
$
x^*=G(\theta)x^*+\theta b.
$
Further, the iteration of Algorithm \ref{alg_mfp} can be written as 
$x^{k+1}=G(\theta)x^k+\theta b$. Thus, 
\begin{equation}
\label{distxk}
\|x^k-x^*\|\leq\|G(\theta)^k\|\|x^0-x^*\|
\end{equation}
and consequently the convergence of the algorithm depends on the spectrum of $G(\theta)$. 
More precisely, it converges if the spectral radius of $G(\theta)$ is less than $1$, because 
in this case we have $G(\theta)^k\to 0$. 

In fact, we will address the following questions: 
\begin{itemize}
\item What is the range for $\theta$ so that this scheme converges?
\item What is the best choice of $\theta$?
\item What is the rate of convergence?
\item How the complexity of this algorithm compares with the known ones?
\end{itemize}

\subsection{Convergence analysis}
\label{sec_conv}
In this section we study the convergence of Algorithm \ref{alg_mfp} and answer the questions 
raised above. To this end we point out some properties of the iteration matrices and uncover 
interesting connections between the complexity bounds of the variants of the fixed 
point scheme we consider. These connections follow from a close link between the spectral 
properties of the associated matrices. 

For this purpose, let 
\begin{equation}
\label{svdA}
A=U\Sigma V^T
\end{equation}
be the singular value decomposition of $A$. That is, $U\in\mathbb{R}^{d\times d}$ and 
$V\in\mathbb{R}^{N\times N}$ are orthogonal matrices and 
\begin{eqnarray}
\label{Sigma_A}
& \Sigma = \left(\begin{array}{ccc}\widetilde\Sigma & & 0 \\ 0 & & 0 \end{array}\right) 
& \hspace{-.35cm}\begin{array}{c} p \\ d-p \end{array}  \\
& \begin{array}{ccc}
\hspace{.75cm} &  p  & N-p   \nonumber \\
\end{array}  
\end{eqnarray}
where $\widetilde\Sigma={\rm diag}(\sigma_1,\ldots,\sigma_p)$ brings the (nonzero) singular 
values $\sigma_1\geq\cdots\geq\sigma_p>0$ of $A$.

First, we state a basic linear algebra result (the proof is straightforward by induction).

\begin{proposition}
\label{pr_det}
Let $Q_j\in\mathbb{R}^{l\times l}$, $j=1,\ldots,4$, be diagonal matrices whose diagonal 
entries are components of $\alpha,\beta,\gamma,\delta\in\mathbb{R}^l$, respectively. Then 
$$
\det\left(\begin{array}{cc} Q_1 & Q_2 \\ Q_3  & Q_4 \end{array}\right)=
\prod_{j=1}^{l}\left(\alpha_j\delta_j-\beta_j\gamma_j\right). 
$$
\end{proposition}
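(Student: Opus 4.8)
The plan is to reduce the $2l\times 2l$ determinant to a product of $l$ determinants of $2\times 2$ matrices by means of a single simultaneous permutation of rows and columns. First I would introduce the ``perfect shuffle'' permutation $\pi$ on $\{1,\dots,2l\}$ that interleaves the index $j\le l$ labelling the diagonal entries of the blocks with the shifted index $l+j$; in terms of old positions it reads the new positions $1,2,3,4,\dots$ as the old positions $1,l+1,2,l+2,\dots$, so that $\pi(2j-1)=j$ and $\pi(2j)=l+j$. Let $P$ denote the associated permutation matrix and write $M=\left(\begin{smallmatrix} Q_1 & Q_2 \\ Q_3 & Q_4\end{smallmatrix}\right)$; I would then consider $P^{T}MP$, which reorders the rows and the columns of $M$ by the same $\pi$.

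The key step is to check that this reordered matrix is block diagonal with the $2\times 2$ diagonal blocks $\left(\begin{smallmatrix}\alpha_j & \beta_j \\ \gamma_j & \delta_j\end{smallmatrix}\right)$. This is where I would use the hypothesis that all four blocks are diagonal: the $(2j-1,2j-1)$, $(2j-1,2j)$, $(2j,2j-1)$ and $(2j,2j)$ entries of the shuffled matrix are exactly the $j$th diagonal entries $\alpha_j,\beta_j,\gamma_j,\delta_j$ of $Q_1,Q_2,Q_3,Q_4$, while every entry coupling two distinct indices $j\neq j'$ vanishes because the off-diagonal entries of each $Q_i$ are zero. Since conjugating by a permutation matrix applies $\pi$ to both rows and columns, the two sign factors cancel, $\det(P^{T}MP)=(\operatorname{sgn}\pi)^{2}\det M=\det M$; and the determinant of a block-diagonal matrix is the product of the blocks' determinants. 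Evaluating each $2\times 2$ determinant as $\alpha_j\delta_j-\beta_j\gamma_j$ then yields the claimed formula $\prod_{j=1}^{l}(\alpha_j\delta_j-\beta_j\gamma_j)$.

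I expect no serious obstacle here: the entire content is the bookkeeping of verifying the block-diagonal form and the cancellation of the permutation signs, both of which are routine. An alternative, matching the parenthetical remark in the statement, is a direct induction on $l$: expanding the determinant along the first column leaves two cofactors weighted by $\alpha_1$ and $\gamma_1$, and the single-entry rows that the diagonal structure creates in those cofactors can be expanded in turn to peel off the factor $\alpha_1\delta_1-\beta_1\gamma_1$ and recover a determinant of the same form of size $2(l-1)$. The only delicate point in that route is tracking the signs $(-1)^{i+j}$ through the repeated expansions, which is precisely what the permutation argument sidesteps.
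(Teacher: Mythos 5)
Your proof is correct, but it takes a different route from the paper, which offers no written argument at all: the proposition is dispatched there with the parenthetical remark that the proof is ``straightforward by induction,'' i.e.\ precisely the cofactor-expansion argument you sketch as your alternative at the end. Your primary argument --- conjugating $M=\left(\begin{smallmatrix} Q_1 & Q_2 \\ Q_3 & Q_4\end{smallmatrix}\right)$ by the perfect-shuffle permutation matrix $P$ to exhibit $P^{T}MP$ as a direct sum of the $2\times 2$ blocks $\left(\begin{smallmatrix}\alpha_j & \beta_j \\ \gamma_j & \delta_j\end{smallmatrix}\right)$ --- is a clean, fully worked-out alternative. Its advantages are exactly the ones you identify: the two permutation signs cancel since $\det(P)^2=1$, so no $(-1)^{i+j}$ bookkeeping is needed, and the block-diagonal picture makes it transparent \emph{why} the product formula holds (the matrix is a direct sum of $2\times 2$ systems in disguise; indeed this is the same structural fact the paper exploits later when it permutes rows and columns of $tI-M_2$ in Lemma~\ref{pr_spectr}). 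What the induction buys instead is that it uses nothing beyond Laplace expansion, at the cost of the sign-tracking you flag as its only delicate point. Both routes are valid; yours is arguably the more illuminating one, and your verification of the block-diagonal form (diagonal entries $\alpha_j,\beta_j,\gamma_j,\delta_j$ in positions $(2j-1,2j-1)$, $(2j-1,2j)$, $(2j,2j-1)$, $(2j,2j)$, all cross-index entries vanishing by diagonality of the $Q_i$) is exactly the right bookkeeping.
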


The next result is crucial for the convergence analysis and complexity study of Algorithm \ref{alg_mfp}. 

\begin{lemma}
\label{pr_spectr}
The characteristic polynomials of the matrices $M_1$ and $M_2$, defined in \eqref{M1_M2}, 
are 
$$
p_1(t)=t^{N+d-2p}\prod_{j=1}^{p}\left(t+\frac{1}{\lambda n}\sigma_j^2\right)^2
\quad\mbox{and}\quad
p_2(t)=t^{N+d-2p}\prod_{j=1}^{p}\left(t^2+\frac{1}{\lambda n}\sigma_j^2\right),
$$
respectively.
\end{lemma}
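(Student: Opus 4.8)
The plan is to use the singular value decomposition \eqref{svdA}--\eqref{Sigma_A} to bring both $M_1$ and $M_2$ into (block) diagonal form, so that the characteristic polynomials can be read off as products of small determinants. Since conjugation by an orthogonal matrix preserves the characteristic polynomial, the natural tool is the block orthogonal matrix ${\rm diag}(U,V)$. The matrix $M_1$ is the decoupled, easy case, while $M_2$, whose off-diagonal blocks couple $w$ and $\alpha$, is where the real work lies.

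For $M_1$, I would first note that it is block diagonal, so that
\[ p_1(t)=\det\Big(tI_d+\frac{1}{\lambda n}AA^T\Big)\,\det\Big(tI_N+\frac{1}{\lambda n}A^TA\Big). \]
By \eqref{svdA} one has $AA^T=U(\Sigma\Sigma^T)U^T$ and $A^TA=V(\Sigma^T\Sigma)V^T$, so $AA^T$ has eigenvalues $\sigma_1^2,\dots,\sigma_p^2$ together with $0$ of multiplicity $d-p$, while $A^TA$ has the same nonzero eigenvalues together with $0$ of multiplicity $N-p$. Substituting these eigenvalues gives $t^{d-p}\prod_{j=1}^p\big(t+\frac{1}{\lambda n}\sigma_j^2\big)$ and $t^{N-p}\prod_{j=1}^p\big(t+\frac{1}{\lambda n}\sigma_j^2\big)$, whose product is exactly the claimed $p_1$.

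For $M_2$, conjugating by ${\rm diag}(U,V)$ replaces $A$ by $\Sigma$ in the off-diagonal blocks, so that
\[ p_2(t)=\det\begin{pmatrix} tI_d & -\frac{1}{\lambda n}\Sigma \\ \Sigma^T & tI_N \end{pmatrix}. \]
I see two routes. Via the Schur complement (valid for $t\neq 0$), this equals $t^N\det\big(tI_d+\frac{1}{\lambda n t}\Sigma\Sigma^T\big)=t^{N-d}\det\big(t^2 I_d+\frac{1}{\lambda n}\Sigma\Sigma^T\big)$; inserting the eigenvalues of $\Sigma\Sigma^T$ (the same as those of $AA^T$) yields $t^{N-d}\,t^{2(d-p)}\prod_{j=1}^p\big(t^2+\frac{1}{\lambda n}\sigma_j^2\big)=t^{N+d-2p}\prod_{j=1}^p\big(t^2+\frac{1}{\lambda n}\sigma_j^2\big)$, and the identity extends to $t=0$ since both sides are polynomials. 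Alternatively, I would permute rows and columns so that the $p$ coordinates carrying a nonzero $\sigma_j$ pair up into $2\times2$ blocks with diagonal entries $t$ and off-diagonal entries $-\frac{1}{\lambda n}\sigma_j$ and $\sigma_j$, each of determinant $t^2+\frac{1}{\lambda n}\sigma_j^2$ (precisely Proposition \ref{pr_det} applied to the diagonal blocks), while the remaining $d-p$ and $N-p$ uncoupled coordinates each contribute a diagonal $t$, accounting for the factor $t^{N+d-2p}$.

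The main obstacle is the coupled structure of $M_2$ combined with the fact that $\Sigma$ is rectangular ($d\neq N$ in general), so Proposition \ref{pr_det} --- stated for square diagonal blocks --- does not apply verbatim. The care required is to separate the $p$ genuinely coupled coordinates, which yield the quadratic factors $t^2+\frac{1}{\lambda n}\sigma_j^2$, from the $d-p$ and $N-p$ coordinates tied to zero singular values, which yield the pure power $t^{N+d-2p}$. The Schur-complement route sidesteps the block-size mismatch cleanly, but carries the $t\neq 0$ caveat and hence needs the polynomial-continuity argument to conclude for every $t$.
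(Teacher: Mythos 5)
Your proposal is correct, and your second (``alternatively'') route is essentially the paper's own proof: the paper writes $M_2=W\Sigma_2W^T$ with $W=\mathrm{diag}(U,V)$, permutes block rows and columns so that the $p$ coordinates carrying nonzero singular values form a $2p\times 2p$ leading block with diagonal blocks $tI$ and off-diagonal blocks $c\widetilde\Sigma$ and $\widetilde\Sigma$ (where $c=-\tfrac{1}{\lambda n}$), applies Proposition \ref{pr_det} to that block, and lets the remaining $(d-p)+(N-p)$ uncoupled coordinates supply the factor $t^{N+d-2p}$ --- exactly your pairing into $2\times 2$ blocks; the treatment of $M_1$ is likewise the same (the paper simply calls it straightforward). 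Your primary route, via the Schur complement, is a genuinely different and equally valid argument: it trades the permutation bookkeeping and Proposition \ref{pr_det} for the identity $p_2(t)=t^{N-d}\det\bigl(t^2I_d+\tfrac{1}{\lambda n}\Sigma\Sigma^T\bigr)$, at the cost of the restriction $t\neq 0$, which you correctly repair by observing that two polynomials agreeing for all $t\neq 0$ agree identically. What each buys: the Schur route sidesteps entirely the coupled/uncoupled coordinate split forced by the rectangularity of $\Sigma$ (your diagnosis of why Proposition \ref{pr_det} cannot be applied verbatim is precisely the issue the paper's permutation step is designed to fix), while the paper's route is purely finite algebra --- products of $2\times 2$ determinants --- and never divides by $t$, so no limiting or identity-extension argument is needed.
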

\begin{proof}
Let $c=-\dfrac{1}{\lambda n}$. From \eqref{svdA} and \eqref{Sigma_A}, we can write 
$M_1=W\Sigma_1W^T$ and $M_2=W\Sigma_2W^T$, where $W=\left(\begin{array}{cc} U & 0 \\ 0 & V \end{array}\right)$, 
$\Sigma_1=\left(\begin{array}{cc}c\Sigma\Sigma^T & 0 \\ 0 & c\Sigma^T\Sigma \end{array}\right)$ and
\begin{eqnarray}
& \Sigma_2= \left(\begin{array}{rccrcc} 
0 & & 0 &   -c\widetilde\Sigma & & 0 \\ 
0 & & 0 &  0  & & 0  \\ 
-\widetilde\Sigma & & 0 &  0 & & 0 \\ 
0 & & 0 &  0 & & 0 
\end{array}\right)
& \hspace{-.35cm}\begin{array}{c} p \\ d-p \\ p \\ N-p. \end{array}  \nonumber \\
& \begin{array}{ccccc}
\hspace{1.25cm} &  p & d-p & \hspace{.25cm} p  & N-p   \nonumber \\
\end{array}
\end{eqnarray}
The evaluation of $p_1(t)=\det(tI-M_1)=\det(tI-\Sigma_1)$ is straightforward and 
$$
p_2(t)=\det(tI-M_2)=\det\left(\begin{array}{cccc} 
tI & 0 &   c\widetilde\Sigma & 0 \\ 0 & tI &  0  & 0  \\ 
\widetilde\Sigma & 0 &  tI & 0 \\ 0 & 0 &  0 & tI 
\end{array}\right) =\det\left(\begin{array}{cccc} 
tI & c\widetilde\Sigma & 0 & 0 \\ \widetilde\Sigma & tI &  0  & 0  \\ 
0 & 0 &  tI & 0 \\ 0 & 0 &  0 & tI 
\end{array}\right). 
$$
The result then follows from Proposition \ref{pr_det}.
\end{proof}

\medskip

The following result follows 
directly from Lemma~\ref{pr_spectr} and the fact that $M_1$ is symmetric.

\begin{corollary}
\label{spectr}
The spectral radii of $M_1$ and $M_2$ are, respectively,  
$$
\rho_1=\|M_1\|=\dfrac{\sigma_1^2}{\lambda n}=\dfrac{\|A\|^2}{\lambda n}
\quad\mbox{and}\quad
\rho_2=\dfrac{\sigma_1}{\sqrt{\lambda n}}=\dfrac{\|A\|}{\sqrt{\lambda n}}.
$$
\end{corollary}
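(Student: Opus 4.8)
The plan is to read the spectra of $M_1$ and $M_2$ directly off the characteristic polynomials supplied by Lemma~\ref{pr_spectr}, compute each spectral radius as the largest modulus among the roots, and finally invoke the symmetry of $M_1$ to identify its spectral radius with its operator norm.

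First I would factor $p_1(t)$. Its roots are $t=0$, occurring with multiplicity $N+d-2p$, together with $t=-\sigma_j^2/(\lambda n)$ for $j=1,\dots,p$, each a double root. All of these are real, so their moduli are $0$ and $\sigma_j^2/(\lambda n)$; since $\sigma_1\geq\cdots\geq\sigma_p>0$, the largest is $\sigma_1^2/(\lambda n)$, giving $\rho_1=\sigma_1^2/(\lambda n)$. Turning to $p_2(t)$, whose roots are $t=0$ (with multiplicity $N+d-2p$) and the solutions of $t^2=-\sigma_j^2/(\lambda n)$, namely the purely imaginary numbers $t=\pm i\,\sigma_j/\sqrt{\lambda n}$, $j=1,\dots,p$, the moduli are $\sigma_j/\sqrt{\lambda n}$, whose maximum is $\sigma_1/\sqrt{\lambda n}$; hence $\rho_2=\sigma_1/\sqrt{\lambda n}$.

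To complete the chain of equalities I would use two standard facts. Since $M_1$ is symmetric it is normal, so its operator $2$-norm coincides with its spectral radius, yielding $\|M_1\|=\rho_1$. And by definition of the SVD in \eqref{svdA}--\eqref{Sigma_A}, the largest singular value $\sigma_1$ equals $\|A\|$, so that $\rho_1=\|A\|^2/(\lambda n)$ and $\rho_2=\|A\|/\sqrt{\lambda n}$, as claimed.

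The computation itself is immediate; the only point deserving attention is recognizing why the norm identity $\|M\|=\rho$ is asserted for $M_1$ but not for $M_2$. The eigenvalues of $M_2$ are purely imaginary and $M_2$ is not normal (its off-diagonal blocks $-c\widetilde\Sigma$ and $-\widetilde\Sigma$ are unequal), so there is no reason for its spectral radius and operator norm to agree; only the spectral radius can be read off the characteristic polynomial. Keeping the stronger claim restricted to the symmetric matrix $M_1$ is therefore the one subtlety to respect in writing the argument.
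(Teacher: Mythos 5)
Your proposal is correct and follows exactly the paper's route: the paper derives this corollary directly from Lemma~\ref{pr_spectr} (reading the eigenvalues off $p_1$ and $p_2$) together with the symmetry of $M_1$, which is precisely what you do, only with the details written out. Your closing remark on why the identity $\|M\|=\rho$ is claimed only for the symmetric matrix $M_1$ and not for the non-normal $M_2$ is a correct and worthwhile observation.
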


From Corollary \ref{spectr} we conclude that if $\sigma_1<\sqrt{\lambda n}$, then 
$\rho_1\leq\rho_2<1$. So, $M_1^k\to 0$ and $M_2^k\to 0$, which in turn implies that the pure 
fixed point method, that is, Algorithm \ref{alg_mfp} with $\theta=1$, converges. However, 
if $\sigma_1\geq\sqrt{\lambda n}$, we cannot guarantee convergence of the pure method. 

Now we shall see that Algorithm \ref{alg_mfp} converges for a broad range of the 
parameter $\theta$, without any assumption on $\sigma_1$,  $\lambda$ or $n$. 
We begin with the analysis of the framework that uses $M_1$ and $b_1$, defined in 
\eqref{M1_M2} and \eqref{b1_b2}.

\subsection{Fixed Point Method based on $M_1$}

\begin{center}
\boxed{
\begin{minipage}{10.3cm}
\begin{algorithm}  {\bf Primal-Dual Fixed Point Method; $M=M_1$}
\label{alg_mfp1}
\end{algorithm}
\begin{tabbing}
\hspace*{7mm}\= \hspace*{7mm}\= \hspace*{7mm}\= \hspace*{7mm}\=
\hspace*{7mm}\= \hspace*{7mm}\= \kill
{\sc input}: $M=M_1$, $b=b_1$,
parameter $\theta>0$ \\
{\sc starting point}: $x^0\in\mathbb{R}^{d+N}$ \\ 
{\sc repeat for $k=0,1,2,\dots$}
\+ \\
{\sc set} $x^{k+1}=(1-\theta)x^k+\theta(Mx^k+b)$ 
\end{tabbing}
\end{minipage}
}
\end{center}

\begin{theorem}
\label{th_mfp1}
Let $x^0\in\mathbb{R}^{d+N}$ be an arbitrary starting point and consider the sequence 
$(x^k)_{k\in\mathbb{N}}$ generated by Algorithm~\ref{alg_mfp1} with 
$\theta\in\left(0,\dfrac{2\lambda n}{\lambda n+\sigma_1^2}\right)$. 
Then the sequence $(x^k)$ converges to the (unique) solution of the problem \eqref{pd_prob} 
at a linear rate of 
$
\rho_1(\theta)\stackrel{\rm def}{=}\max\left\{
\left|1-\theta\left(1+\dfrac{\sigma_1^2}{\lambda n}\right)\right|,1-\theta
\right\}
$ 
Furthermore, if we choose 
$\theta_1^*\stackrel{\rm def}{=}\dfrac{2\lambda n}{2\lambda n+\sigma_1^2}$, then the 
(theoretical) convergence rate is optimal and it is equal to 
$
\rho_1^*\stackrel{\rm def}{=}\dfrac{\sigma_1^2}{2\lambda n+\sigma_1^2}=
1-\theta_1^*.
$
\end{theorem}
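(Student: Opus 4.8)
The plan is to reduce the whole statement to the eigenvalues of the symmetric iteration matrix $G(\theta)=(1-\theta)I+\theta M_1$ and then solve a one-variable min--max problem in $\theta$. First I would diagonalize using Lemma~\ref{pr_spectr}: since its characteristic polynomial is $p_1(t)=t^{N+d-2p}\prod_{j=1}^{p}(t+\frac{\sigma_j^2}{\lambda n})^2$ and $M_1$ is symmetric, the eigenvalues of $M_1$ are real, namely $0$ (with multiplicity $N+d-2p$) and $-\frac{\sigma_j^2}{\lambda n}$ (each with multiplicity two) for $j=1,\dots,p$. Hence the eigenvalues of $G(\theta)$ are $1-\theta$ and $1-\theta\left(1+\frac{\sigma_j^2}{\lambda n}\right)$, $j=1,\dots,p$. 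Because $G(\theta)$ is symmetric, $\|G(\theta)^k\|=\rho(G(\theta))^k$, so the bound \eqref{distxk} already upgrades to linear convergence at the exact rate $\rho(G(\theta))$, the spectral radius; it remains to identify this radius and minimize it over $\theta$.

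Next I would compute the spectral radius. Writing $\mu_j=1+\frac{\sigma_j^2}{\lambda n}$, so that $1<\mu_p\le\cdots\le\mu_1$, the radius is $\max\{|1-\theta|,\ \max_j|1-\theta\mu_j|\}$. The key observation is that for fixed $\theta>0$ the map $\mu\mapsto|1-\theta\mu|$ is convex (V-shaped), so its maximum over $\mu_j\in[\mu_p,\mu_1]$ is attained at an endpoint; thus $\max_j|1-\theta\mu_j|=\max\{|1-\theta\mu_1|,|1-\theta\mu_p|\}$. A short case check using $\mu_p>1$ then shows the $\mu_p$ endpoint is always dominated: when $1-\theta\mu_p\ge0$ it is at most $1-\theta$, and when $1-\theta\mu_p<0$ it is at most $|1-\theta\mu_1|$. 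This yields $\rho(G(\theta))=\max\{|1-\theta\mu_1|,\,1-\theta\}=\rho_1(\theta)$, where dropping the absolute value on $1-\theta$ is harmless because for $\theta>1$ the term $|1-\theta\mu_1|$ already dominates $|1-\theta|$. Requiring both $|1-\theta|<1$ and $|1-\theta\mu_1|<1$ forces exactly $0<\theta<2/\mu_1=\frac{2\lambda n}{\lambda n+\sigma_1^2}$, which is the stated interval and guarantees $\rho_1(\theta)<1$, hence $G(\theta)^k\to0$ and convergence to the unique solution $x^*$ of \eqref{pd_prob}.

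Finally I would optimize $\theta$ on $(0,2/\mu_1)$. There the branch $1-\theta$ is decreasing, while $|1-\theta\mu_1|$ decreases to $0$ at $\theta=1/\mu_1$ and then increases as $\theta\mu_1-1$. For small $\theta$ the decreasing branch dominates, and past the crossing point the increasing branch dominates, so $\rho_1(\theta)$ is minimized exactly where the two graphs meet, i.e.\ where $\theta\mu_1-1=1-\theta$, giving $\theta(\mu_1+1)=2$. This produces $\theta_1^*=\frac{2}{\mu_1+1}=\frac{2\lambda n}{2\lambda n+\sigma_1^2}$ and optimal rate $\rho_1^*=1-\theta_1^*=\frac{\sigma_1^2}{2\lambda n+\sigma_1^2}$; one checks $\theta_1^*\in(1/\mu_1,1)\subset(0,2/\mu_1)$ since $\mu_1>1$, confirming the crossing lies in the right regime.

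I expect the main obstacle to be the spectral-radius simplification in the second step: showing that among the $p+1$ distinct eigenvalue magnitudes only the two extreme ones, associated with $\sigma_1$ and with the zero eigenvalue, can attain the maximum. This is precisely where the convexity of $\mu\mapsto|1-\theta\mu|$ and the case analysis for the $\mu_p$ endpoint are needed; once that reduction is in hand, the convergence range and the optimal-$\theta$ crossing computation are routine.
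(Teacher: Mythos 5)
Your proposal is correct and follows essentially the same route as the paper's proof: identify the eigenvalues of $G_1(\theta)$ via Lemma~\ref{pr_spectr}, use symmetry to equate the norm with the spectral radius, derive the admissible range from $\left|1-\theta\left(1+\sigma_1^2/(\lambda n)\right)\right|<1$, and pick $\theta_1^*$ where the two branches of $\rho_1(\theta)$ cross. The only difference is one of care rather than substance: you explicitly justify (via convexity of $\mu\mapsto|1-\theta\mu|$ and the endpoint case check) that only the eigenvalues associated with $\sigma_1$ and with the zero singular values can attain the maximum modulus, a reduction the paper asserts without proof.
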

\begin{proof}
We claim that the spectral radius of 
$G_1(\theta)\stackrel{\rm def}{=}(1-\theta)I+\theta M_1$ is $\rho_1(\theta)$ and also 
coincides with $\|G_1(\theta)\|$. Using Lemma \ref{pr_spectr}, we 
conclude that the eigenvalues of this matrix are  
$$
\left\{1-\theta-\dfrac{\theta\sigma_j^2}{\lambda n}\;,\quad j=1,\ldots,p
\right\}\cup\{1-\theta\}.
$$
So, its spectral radius is 
$$
\max\left\{
\left|1-\theta\left(1+\dfrac{\sigma_1^2}{\lambda n}\right)\right|,1-\theta
\right\}=\rho_1(\theta).
$$ 
Since $G_1(\theta)$ is symmetric, this quantity coincides with $\|G_1(\theta)\|$.
Furthermore, the admissible values for $\theta$, that is, the ones such that the eigenvalues 
have modulus less than one, can be found by solving 
$$
\left|1-\theta\left(1+\dfrac{\sigma_1^2}{\lambda n}\right)\right|<1,
$$
which immediately gives $0<\theta<\dfrac{2\lambda n}{\lambda n+\sigma_1^2}$. 
So, the linear convergence of Algorithm \ref{alg_mfp} is guaranteed for any 
$\theta\in\left(0,\dfrac{2\lambda n}{\lambda n+\sigma_1^2}\right)$.
Finally, note that the solution of the problem
$$
\min_{\theta>0}\rho_1(\theta)
$$
is achieved when 
$
\theta\left(1+\dfrac{\sigma_1^2}{\lambda n}\right)-1=1-\theta, 
$
yielding $\theta_1^*=\dfrac{2\lambda n}{2\lambda n+\sigma_1^2}$ and 
the optimal convergence rate $\rho_1^*=\dfrac{\sigma_1^2}{2\lambda n+\sigma_1^2}$. 
\end{proof}

\medskip

The top picture of Figure \ref{fig_thM1} illustrates the eigenvalues of 
$G_1(\theta)$ (magenta squares) together with the eigenvalues of 
$M_1$ (blue triangles), for a fixed value of the parameter $\theta$. The one farthest from the origin is 
$1-\theta-\dfrac{\theta\sigma_1^2}{\lambda n}$ or $1-\theta$. On the bottom we show the 
two largest (in absolute value) eigenvalues of $G_1(\theta)$ corresponding to the optimal 
choice of $\theta$. 
\begin{figure}[htbp]
\centering
\includegraphics[scale=0.45]{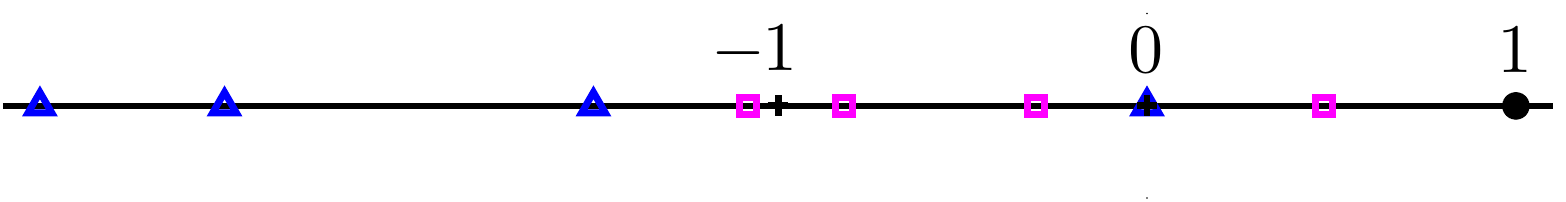}
\includegraphics[scale=0.45]{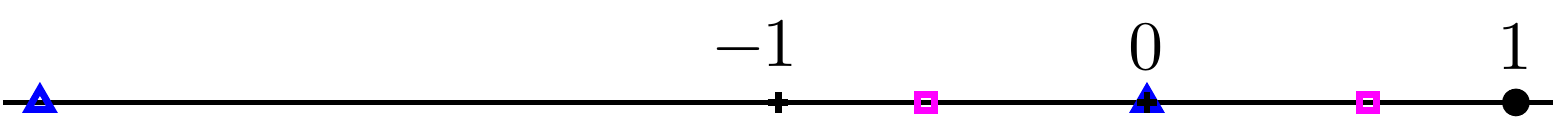}
\caption{Eigenvalues of $G_1(\theta)$ (magenta squares) and $M_1$ (blue triangles).}
\label{fig_thM1}
\end{figure}

Now we analyze the fixed point framework that employs $M_2$ and $b_2$, defined in 
\eqref{M1_M2} and \eqref{b1_b2}.

\subsection{Fixed Point Method based on $M_2$}

\begin{center}
\boxed{
\begin{minipage}{10.65cm}
\begin{algorithm}  {\bf (Primal-Dual Fixed Point Method; $M=M_2$)}
\label{alg_mfp2}
\end{algorithm}
\begin{tabbing}
\hspace*{7mm}\= \hspace*{7mm}\= \hspace*{7mm}\= \hspace*{7mm}\=
\hspace*{7mm}\= \hspace*{7mm}\= \kill
{\sc input}: $M=M_2$, $b=b_2$,
parameter $\theta>0$ \\
{\sc starting point}: $x^0\in\mathbb{R}^{d+N}$ \\ 
{\sc repeat for $k=0,1,2,\dots$}
\+ \\
{\sc set} $x^{k+1}=(1-\theta)x^k+\theta(Mx^k+b)$
\end{tabbing}
\end{minipage}
}
\end{center}

\begin{theorem}
\label{th_mfp2}
Let $x^0\in\mathbb{R}^{d+N}$ be an arbitrary starting point and consider the sequence $(x^k)_{k\in\mathbb{N}}$ 
generated by Algorithm~\ref{alg_mfp2} with 
$\theta\in\left(0,\dfrac{2\lambda n}{\lambda n+\sigma_1^2}\right)$. 
Then the sequence $(x^k)$ converges to the (unique) solution of the problem \eqref{pd_prob} 
at an asymptotic convergence rate of 
$
\rho_2(\theta)\stackrel{\rm def}{=}\sqrt{(1-\theta)^2+\dfrac{\theta^2\sigma_1^2}{\lambda n}}.
$ 
Furthermore, if we choose $\theta_2^*\stackrel{\rm def}{=}\dfrac{\lambda n}{\lambda n+\sigma_1^2}$, 
then the (theoretical) convergence rate is optimal and it is equal to 
$
\rho_2^*\stackrel{\rm def}{=}\dfrac{\sigma_1}{\sqrt{\lambda n+\sigma_1^2}}=
\sqrt{1-\theta_2^*}.
$ 
\end{theorem}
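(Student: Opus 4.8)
The plan is to follow the template of the proof of Theorem~\ref{th_mfp1}: extract the eigenvalues of $G_2(\theta)=(1-\theta)I+\theta M_2$ from the characteristic polynomial of $M_2$, identify the spectral radius, and then read off the convergence range, the rate, and the optimal parameter. The one genuinely new ingredient is that $G_2(\theta)$ is not symmetric, so the identity ``spectral radius $=$ operator norm'' used for $M_1$ is unavailable; this is precisely why the conclusion is phrased as an \emph{asymptotic} rate rather than a per-iteration linear rate.

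First I would read the spectrum of $M_2$ off Lemma~\ref{pr_spectr}. The factor $t^{N+d-2p}$ yields the eigenvalue $0$ with multiplicity $N+d-2p$, and each quadratic factor $t^2+\sigma_j^2/(\lambda n)$ yields the purely imaginary conjugate pair $\pm i\sigma_j/\sqrt{\lambda n}$. Since the eigenvalues of $G_2(\theta)$ are $1-\theta+\theta\mu$ as $\mu$ ranges over the eigenvalues of $M_2$, they are $1-\theta$ together with the pairs $1-\theta\pm i\theta\sigma_j/\sqrt{\lambda n}$, $j=1,\dots,p$. The modulus of such a pair is $\sqrt{(1-\theta)^2+\theta^2\sigma_j^2/(\lambda n)}$, which increases with $\sigma_j$ and already dominates $|1-\theta|$; hence the spectral radius of $G_2(\theta)$ equals $\rho_2(\theta)$.

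Next I would impose $\rho_2(\theta)<1$. Squaring and simplifying, $(1-\theta)^2+\theta^2\sigma_1^2/(\lambda n)<1$ reduces to $\theta\big(\theta(1+\sigma_1^2/(\lambda n))-2\big)<0$, which for $\theta>0$ is equivalent to $\theta<2\lambda n/(\lambda n+\sigma_1^2)$, exactly the stated interval. On this interval $\rho_2(\theta)<1$ forces $G_2(\theta)^k\to 0$, so by \eqref{distxk} the iterates converge to $x^*$. Because $G_2(\theta)$ is non-symmetric, $\|G_2(\theta)^k\|$ need not equal $\rho_2(\theta)^k$; the correct assertion is that $\lim_{k\to\infty}\|G_2(\theta)^k\|^{1/k}=\rho_2(\theta)$ by the Gelfand spectral-radius formula, which is what ``asymptotic convergence rate $\rho_2(\theta)$'' means. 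Here I would also note that each $2\times 2$ block of $M_2$ has two distinct eigenvalues, so $M_2$, and hence $G_2(\theta)$, is diagonalizable over $\mathbb{C}$; this upgrades the asymptotic statement to a clean bound $\|G_2(\theta)^k\|\le C\rho_2(\theta)^k$ with $C$ depending only on the eigenvector conditioning.

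Finally, for the optimal parameter I would minimize the smooth convex function $\theta\mapsto\rho_2(\theta)^2=(1-\theta)^2+\theta^2\sigma_1^2/(\lambda n)$. Setting its derivative to zero gives $-(1-\theta)+\theta\sigma_1^2/(\lambda n)=0$, that is $\theta_2^*=\lambda n/(\lambda n+\sigma_1^2)$; substituting back and simplifying yields $\rho_2^*=\sigma_1/\sqrt{\lambda n+\sigma_1^2}=\sqrt{1-\theta_2^*}$. The main obstacle, as flagged above, is conceptual rather than computational: resisting the temptation to claim a Euclidean per-step contraction and instead framing the rate asymptotically, justified by Gelfand's formula or by the diagonalizability of $G_2(\theta)$.
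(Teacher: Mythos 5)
Your proof is correct and follows essentially the same route as the paper: read the eigenvalues of $G_2(\theta)$ from Lemma~\ref{pr_spectr}, identify the spectral radius $\rho_2(\theta)$, obtain the admissible range by solving $\rho_2(\theta)<1$, invoke $\|G_2(\theta)^k\|^{1/k}\to\rho_2(\theta)$ for the asymptotic rate, and minimize $(1-\theta)^2+\theta^2\sigma_1^2/(\lambda n)$ to get $\theta_2^*$ and $\rho_2^*$. Your closing observation that $M_2$, and hence $G_2(\theta)$, is diagonalizable over $\mathbb{C}$ — which upgrades the asymptotic statement to a bound $\|G_2(\theta)^k\|\le C\,\rho_2(\theta)^k$ — is a correct refinement that the paper's proof does not include, but it does not change the underlying approach.
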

\begin{proof}
First, using Lemma \ref{pr_spectr}, we conclude that the eigenvalues of 
$G_2(\theta)\stackrel{\rm def}{=}(1-\theta)I+\theta M_2$ are 
$$
\left\{1-\theta\pm\dfrac{\theta\sigma_j}{\sqrt{\lambda n}}i\;,\quad j=1,\ldots,p
\right\}\cup\{1-\theta\},
$$
where $i=\sqrt{-1}$. The two ones with largest modulus are 
$1-\theta\pm\dfrac{\theta\sigma_1}{\sqrt{\lambda n}}i$ (see Figure \ref{fig_thM2}). 
So, the spectral radius of $G_2(\theta)$ is 
$$
\sqrt{(1-\theta)^2+\dfrac{\theta^2\sigma_1^2}{\lambda n}}=\rho_2(\theta).
$$ 
Further, the values of $\theta$ for which the eigenvalues of $G_2(\theta)$ have modulus 
less than one can be found by solving 
$(1-\theta)^2+\dfrac{\theta^2\sigma_1^2}{\lambda n}<1$ giving 
$$
0<\theta<\dfrac{2\lambda n}{\lambda n+\sigma_1^2}.
$$
The asymptotic convergence follows from the fact that 
$\|G_2(\theta)^k\|^{1/k}\to\rho_2(\theta)$. Indeed, using \eqref{distxk} we conclude that 
$$
\left(\dfrac{\|x^k-x^*\|}{\|x^0-x^*\|}\right)^{1/k}\leq \|G_2(\theta)^k\|^{1/k}\to\rho_2(\theta).
$$
This means that given $\gamma>0$, there exists $k_0\in\mathbb{N}$ such that 
$$
\|x^k-x^*\|\leq(\rho_2(\theta)+\gamma)^k\|x^0-x^*\|
$$
for all $k\geq k_0$. Finally, the optimal parameter $\theta_2^*$ and the corresponding 
optimal rate $\rho_2^*$ can be obtained directly by solving 
$$
\min_{\theta>0}\; (1-\theta)^2+\dfrac{\theta^2\sigma_1^2}{\lambda n}.
$$
\end{proof}

\medskip

The left picture of Figure \ref{fig_thM2} illustrates, in the complex plane, the eigenvalues of 
$G_2(\theta)$ (magenta squares) together with the eigenvalues of $M_2$ (blue triangles), for a fixed 
value of the parameter $\theta$. On the right we show, for each $\theta\in(0,1)$, 
{one of the two eigenvalues of $G_2(\theta)$ farthest from the origin.} 
The dashed segment corresponds to the admissible values for $\theta$, that is, the 
eigenvalues with modulus less than one. The square corresponds to the optimal choice of 
$\theta$. 

\begin{figure}[htbp]
\centering
\includegraphics[scale=0.42]{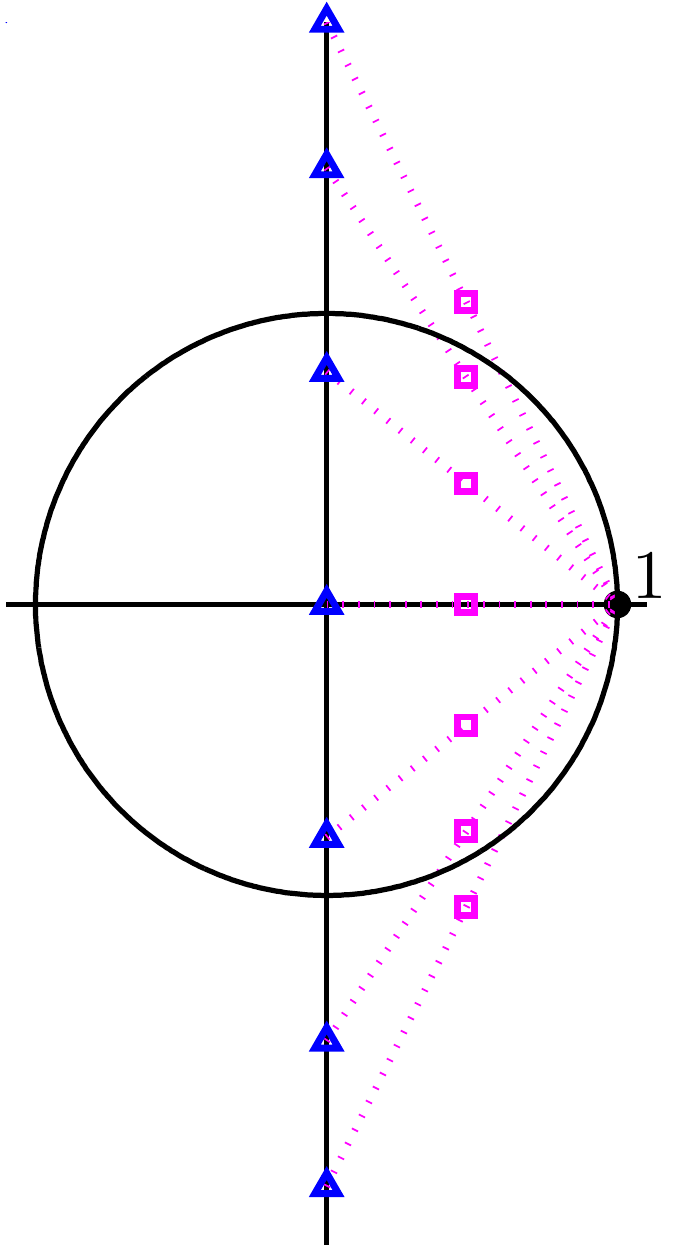}
\hspace{1.25cm}
\includegraphics[scale=0.42]{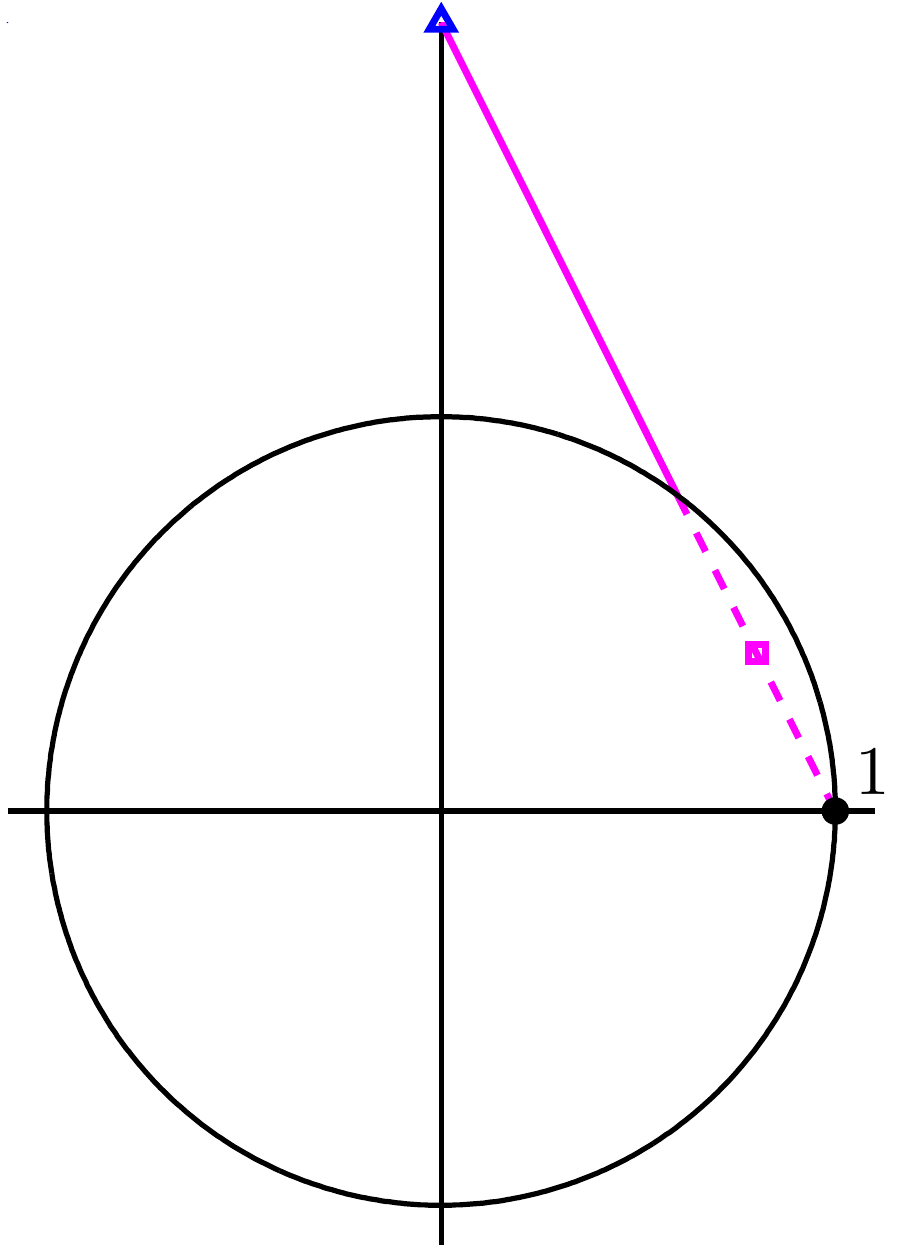}
\caption{Eigenvalues of $G_2(\theta)$ (magenta squares) and $M_2$ (blue triangles), 
represented in the complex plane.}
\label{fig_thM2}
\end{figure}

\subsection{Comparison of the rates}
We summarize the discussion above in Table \ref{table1} which brings the comparison between the 
pure ($\theta=1$) and optimal ($\theta=\theta_j^*$, $j=1,2$) versions of Algorithm~\ref{alg_mfp}. 
We can see that the convergence rate of the optimal version is 
$\lambda n/(2\lambda n+\sigma_1^2)$ times the one of the pure version if $M_1$ is 
employed \redcolor{(Algorithm \ref{alg_mfp1})} and $\sqrt{\lambda n/(\lambda n+\sigma_1^2)}$ 
times the pure version when using $M_2$ \redcolor{(Algorithm \ref{alg_mfp2})}. Moreover, in any case, 
employing $M_1$ provides faster convergence. This can be seen in Figure \ref{fig_fxp}, where 
Algorithm \ref{alg_mfp} was applied to solve the problem \eqref{pd_prob}. The dimensions considered 
were $d=200$, $m=1$ and $n=5000$ (so that the total dimension is $d+N=d+nm=5200$). 

We also mention that the pure version does not require the knowledge of $\sigma_1$, but it 
may not converge. On the other hand, the optimal version always converges, but 
$\theta$ depends on $\sigma_1$. 

\begin{table}[htbp]
\begin{center}
\renewcommand{\arraystretch}{2.25}
\begin{tabular}{|c|c|c|}
\hline
 & \redcolor{PDFP1($\theta$)} & \redcolor{PDFP2($\theta$)}  \\
\cline{1-3}
Range of $\theta$ & 
$\left(0,\dfrac{2\lambda n}{\lambda n+\sigma_1^2}\right)$ & 
$\left(0,\dfrac{2\lambda n}{\lambda n+\sigma_1^2}\right)$   \\
\cline{1-3}
Pure ($\theta=1$)  & $\dfrac{\sigma_1^2}{\lambda n}$ & 
$\dfrac{\sigma_1}{\sqrt{\lambda n}}$   \\
\cline{1-3}
Optimal ($\theta=\theta_j^*$)  & $\dfrac{\sigma_1^2}{2\lambda n+\sigma_1^2}=1-\theta_1^*$ & 
$\sqrt{\dfrac{\sigma_1^2}{\lambda n+\sigma_1^2}}=\sqrt{1-\theta_2^*}$   \\
\hline
\end{tabular}
\end{center}
\caption{\redcolor{Ranges of convergence and convergence rates of pure and optimal versions 
of Algorithm \ref{alg_mfp1} (the one that uses $M_1$), indicated by PDFP1($\theta$), and 
Algorithm \ref{alg_mfp2} (the one that uses $M_2$), PDFP2($\theta$).}}
\label{table1}
\end{table}

\begin{figure}[htbp]
\centering
\includegraphics[scale=0.45]{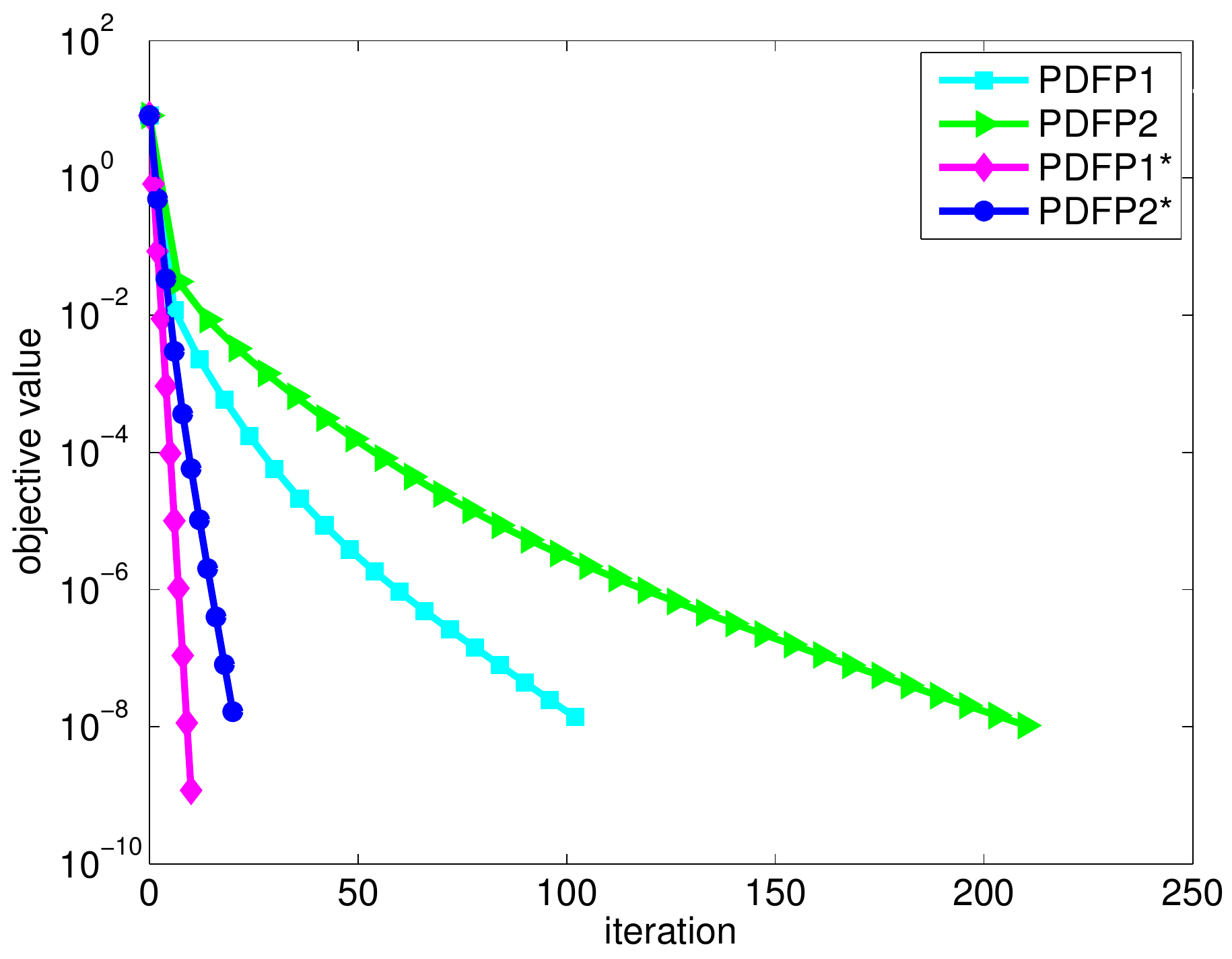}
\caption{\redcolor{Performance of pure and optimal versions of Algorithms~\ref{alg_mfp1} and 
\ref{alg_mfp2} applied to solve the problem \eqref{pd_prob}. The picture shows the objective values 
against the number of iterations. The dimensions considered were 
$d=200$, $m=1$ and $n=5000$ (so that the total dimension is $d+N=d+nm=5200$). The matrix 
$A\in\mathbb{R}^{d\times N}$ and the vector $y\in\mathbb{R}^{N}$ were randomly generated. 
For simplicity of notation we have denoted the pure and optimal versions of Algorithm~\ref{alg_mfp1}
by PDFP1 and PDFP1*, respectively. Analogously, for Algorithm~\ref{alg_mfp2}, we used 
PDFP2 and PDFP2* to denote the pure and optimal versions, respectively.}}
\label{fig_fxp}
\end{figure}

\subsection{Direct relationship between the iterates of the two methods}
Another relation regarding the employment of $M_1$ or $M_2$ in the pure version of 
Algorithm~\ref{alg_mfp}, which is also illustrated in Figure \ref{fig_fxp}, is that 
one step of the method with $M_1$ corresponds exactly to two steps of the one with $M_2$. 
Indeed, note first that $M_2^2=M_1$. Thus, denoting the current point by $x$ and the next 
iterate by $x_M^+$, in view of \eqref{mfp1_mfp2} we have 
\begin{align*}
x_{M_2}^{++} & =  M_2x_{M_2}^++b_2 =
M_2(M_2x+b_2)+b_2 =
 M_1x+M_2b_2+b_2 \\
& =  M_1x+b_1
=  x_{M_1}^+ .
\end{align*}

In Section \ref{sec_qtz} we shall see how this behavior can invert with a small change in the 
computation of the dual variable.

\subsection{Complexity results}
\label{sec_cpx}
In order to establish the complexity of Algorithm~\ref{alg_mfp} we need to 
calculate the condition number of the objective function, defined in \eqref{pd_prob}. 
Note that the Hessian of $f$ is given by 
$$
\nabla^2f=\dfrac{1}{n}\left(\begin{array}{cc} AA^T+\lambda nI & 0 \\ 
0 & \frac{1}{\lambda n}A^TA+I \end{array}\right)
$$
Let us consider two cases:
\begin{itemize}
\item If $\lambda n\geq 1$, then 
$\sigma_1^2+\lambda n\geq\sigma_1^2+1\geq\dfrac{\sigma_1^2}{\lambda n}+1$, 
which in turn implies that the largest eigenvalue of $\nabla^2f$ is 
$L=\dfrac{\sigma_1^2+\lambda n}{n}$. 
The smallest eigenvalue is 
$$
\left\{\begin{array}{l}
\dfrac{1}{n},\, \mbox{if } d<N  \vspace{.152cm} 
\\
\dfrac{\sigma_d^2}{\lambda n^2}+\dfrac{1}{n},\, \mbox{if } d=N  \vspace{.152cm} 
\\
\min\left\{\lambda,\dfrac{\sigma_N^2}{\lambda n^2}+\dfrac{1}{n}\right\}, \, \mbox{if } d>N. \vspace{.152cm} 
\end{array}\right.
$$
Therefore, if $d<N$, the condition number of $\nabla^2f$ is 
the condition number of $\nabla^2f$ is 
\begin{equation}
\label{cond_f1}
\sigma_1^2+\lambda n.
\end{equation}

\item If $\lambda n<1$, then 
$\sigma_1^2+\lambda n<\sigma_1^2+1<\dfrac{\sigma_1^2}{\lambda n}+1$, 
which in turn implies that the largest eigenvalue of $\nabla^2f$ is 
$L=\dfrac{\sigma_1^2+\lambda n}{\lambda n^2}$. The smallest eigenvalue is 
$$
\left\{\begin{array}{l}
\min\left\{\dfrac{\sigma_d^2}{n}+\lambda,\dfrac{1}{n}\right\}, \, \mbox{if } d<N \vspace{.152cm} \\
\dfrac{\sigma_d^2}{n}+\lambda,\, \mbox{if } d=N  \vspace{.152cm} \\
\lambda,\, \mbox{if } d>N. 
\end{array}\right.
$$

So, assuming that $d<N$, the condition number is 
$\dfrac{\sigma_1^2+\lambda n}{\lambda n^2\min\left\{\dfrac{\sigma_d^2}{n}+\lambda,\dfrac{1}{n}\right\}}$.
If $A$ is rank deficient, then the condition number is 
\begin{equation}
\label{cond_f2}
\dfrac{\sigma_1^2+\lambda n}{(\lambda n)^2}.
\end{equation}
\end{itemize}

We stress that despite the analysis was made in terms of the sequence $x^k=(w^k,\alpha^k)$, 
the linear convergence also applies to objective values. Indeed, since $f$ is $L$-smooth, 
we have 
$$
f(x^k)\leq f(x^*)+\nabla f(x^*)^T(x^k-x^*)+\dfrac{L}{2}\|x^k-x^*\|^2=\dfrac{L}{2}\|x^k-x^*\|^2,
$$
where the equality follows from the fact that the optimal objective value is zero. 
Therefore, if we want to get $f(x^k)-f(x^*)<\varepsilon$ and we have linear convergence 
rate $\rho$ on the sequence $(x^k)$, then it is enough to enforce 
$$
\dfrac{L}{2}\rho^{2k}\|x^0-x^*\|^2<\varepsilon,
$$
or equivalently, 
\begin{equation}
\label{cpx0}
k>\dfrac{-1}{2\log\rho}\log\left(\dfrac{\|x^0-x^*\|^2L}{2\varepsilon}\right).
\end{equation}
Using the estimate $\log(1-\theta)\approx-\theta$, we can approximate the \redcolor{right hand} side of 
\eqref{cpx0} by 
\begin{equation}
\label{cpx0ap1}
\dfrac{1}{2\theta_1^*}\log\left(\dfrac{\|x^0-x^*\|^2L}{2\varepsilon}\right),
\end{equation}
in the case $M_1$ is used and by 
\begin{equation}
\label{cpx0ap2}
\dfrac{1}{\theta_2^*}\log\left(\dfrac{\|x^0-x^*\|^2L}{2\varepsilon}\right),
\end{equation}
if we use $M_2$.

In order to estimate the above expressions in terms of the condition number, let us consider 
the more common case $\lambda n\geq 1$. Then the condition number of 
$\nabla^2f$ is given by \eqref{cond_f1}, that is, 
\begin{equation}
\label{cond_f}
\kappa\stackrel{\rm def}{=}\sigma_1^2+\lambda n.
\end{equation}
So, if we use $M_1$, the complexity is proportional to 
\begin{equation}
\label{cpx1}
\dfrac{1}{2\theta_1^*}=\dfrac{\sigma_1^2+2\lambda n}{4\lambda n}=
\dfrac{\kappa+\lambda n}{4\lambda n}.
\end{equation}
If we use $M_2$, the complexity is proportional to 
\begin{equation}
\label{cpx2}
\dfrac{1}{\theta_2^*}=\dfrac{\lambda n+\sigma_1^2}{\lambda n}=
\dfrac{\kappa}{\lambda n}.
\end{equation}

\section{Accelerated Primal-Dual Fixed Point Method}
\label{sec_qtz}
Now we present our main contribution. When we employ Algorithm \ref{alg_mfp2}, 
the primal and dual variables are mixed in two equations. More precisely, in view of 
\eqref{alphabar} the iteration in this case can be rewritten as 
$$
\left\{
\begin{array}{l}
w^{k+1}=(1-\theta)w^k+\theta\bar\alpha^k \\
\alpha^{k+1}=(1-\theta)\alpha^k+\theta(y-A^Tw^k).
\end{array}
\right.
$$
The idea here is to apply block Gauss-Seidel to this system. That is, we use the freshest $w$ 
to update $\alpha$. Let us state formally the method by means of the following framework.

\begin{center}
\boxed{
\begin{minipage}{9.6cm}
\begin{algorithm} {\bf Accelerated Fixed Point Method}
\label{alg_qtz}
\end{algorithm}
\begin{tabbing}
\hspace*{7mm}\= \hspace*{7mm}\= \hspace*{7mm}\= \hspace*{7mm}\=
\hspace*{7mm}\= \hspace*{7mm}\= \kill
{\sc input}: matrix $A\in\mathbb{R}^{d\times N}$, vector $y\in\mathbb{R}^N$, 
parameter $\theta\in(0,1]$ \\ 
{\sc starting points}:  $w^0\in\mathbb{R}^d$ and $\alpha^0\in\mathbb{R}^N$ \\ 
{\sc repeat for $k=0,1,2,\dots$}
\+ \\
{\sc set} $w^{k+1}=(1-\theta)w^k+\theta\bar\alpha^k$ \\
{\sc set} $\alpha^{k+1}=(1-\theta)\alpha^k+\theta(y-A^Tw^{k+1})$ 
\end{tabbing}
\end{minipage}
}
\end{center}

Due to this modification, we can achieve faster convergence. 
This algorithm is a deterministic version of a randomized primal-dual algorithm (Quartz) 
proposed and analyzed by Qu, Richt\'arik and Zhang \cite{Quartz}.  

\subsection{Convergence analysis}
\label{sec_cvqtz}
In this section we study the convergence of Algorithm \ref{alg_qtz}. We shall 
determine all values for the parameter $\theta$ for which this algorithm converges as 
well as the one giving the best convergence rate. 

To this end, we start by showing that Algorithm \ref{alg_qtz} can be viewed as a fixed 
point scheme. Then we determine the ``dynamic'' spectral properties of the associated matrices, 
which are parameterized by $\theta$. 

First, note that the iteration of our algorithm can be written as
$$
\left(\begin{array}{cc} I & 0 \\ \theta A^T & I \end{array}\right)
\left(\begin{array}{c} w^{k+1} \\ \alpha^{k+1}\end{array}\right)=
\left(\begin{array}{cc} (1-\theta)I & \frac{\theta}{\lambda n}A  \vspace{.15cm}\\ 
0 & (1-\theta)I \end{array}\right)
\left(\begin{array}{c} w^k \\ \alpha^k\end{array}\right)+
\left(\begin{array}{c} 0 \\ \theta y \end{array}\right)
$$
or in a compact way as 
\begin{equation}
\label{qtzfixp}
x^{k+1}=G_3(\theta)x^k+f
\end{equation}
with 
\begin{equation}
\label{Gqtz}
G_3(\theta)=
(1-\theta)I+\theta\left(\begin{array}{cc} 0 & \frac{1}{\lambda n}A \vspace{.15cm}\\ 
(\theta-1)A^T & -\frac{\theta}{\lambda n}A^TA \end{array}\right)
\end{equation}
and 
$
f=\left(\begin{array}{c} 0 \\ \theta y \end{array}\right).
$

We know that if the spectral radius of $G_3(\theta)$ is less that $1$, 
then the sequence defined by \eqref{qtzfixp} converges. Indeed, in this case the limit point 
is just $x^*$, the solution of the problem \eqref{pd_prob}. This follows from the fact that 
$x^*=G_3(\theta)x^*+f$.

Next lemma provides the spectrum of $G_3(\theta)$. 
\begin{lemma}
\label{lm_eigGqtz}
The eigenvalues of the matrix $G_3(\theta)$, defined in \eqref{Gqtz}, are given by 
$$
\dfrac{1}{2\lambda n}
\left\{2(1-\theta)\lambda n-\theta^2\sigma_j^2\pm
\theta\sigma_j\sqrt{\theta^2\sigma_j^2-4(1-\theta)\lambda n}
\;,\quad j=1,\ldots,p \right\}\cup\{1-\theta\}.
$$
\end{lemma}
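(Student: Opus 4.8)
The plan is to diagonalize $G_3(\theta)$ into a block structure using the singular value decomposition, exactly in the spirit of the proof of Lemma~\ref{pr_spectr}, and then to read the eigenvalues off small blocks. First I would write $G_3(\theta)$ explicitly as the $2\times2$ block matrix
\[
G_3(\theta)=\begin{pmatrix} (1-\theta)I & \frac{\theta}{\lambda n}A \\ \theta(\theta-1)A^T & (1-\theta)I-\frac{\theta^2}{\lambda n}A^TA \end{pmatrix},
\]
and conjugate it by the orthogonal matrix $W=\begin{pmatrix} U & 0 \\ 0 & V \end{pmatrix}$ built from the SVD \eqref{svdA}. Since $U^TAV=\Sigma$, $V^TA^TU=\Sigma^T$ and $V^TA^TAV=\Sigma^T\Sigma$, every occurrence of $A$ is replaced by $\Sigma$, giving the similar (hence isospectral) matrix
\[
W^TG_3(\theta)W=\begin{pmatrix} (1-\theta)I & \frac{\theta}{\lambda n}\Sigma \\ \theta(\theta-1)\Sigma^T & (1-\theta)I-\frac{\theta^2}{\lambda n}\Sigma^T\Sigma \end{pmatrix}.
\]

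Because $\Sigma$ has the diagonal form \eqref{Sigma_A}, this matrix couples only the $j$-th left singular direction with the $j$-th right singular direction. I would then permute the $d+N$ coordinates so that, for each $j=1,\dots,p$, the primal coordinate is placed next to its matching dual coordinate; this turns $W^TG_3(\theta)W$ into a block-diagonal matrix consisting of the $p$ blocks
\[
B_j=\begin{pmatrix} 1-\theta & \frac{\theta\sigma_j}{\lambda n} \\ \theta(\theta-1)\sigma_j & 1-\theta-\frac{\theta^2\sigma_j^2}{\lambda n} \end{pmatrix},\qquad j=1,\dots,p,
\]
together with $d+N-2p$ scalar blocks equal to $1-\theta$, arising from the $d-p$ and $N-p$ zero singular directions. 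These scalar blocks immediately account for the eigenvalue $1-\theta$ in the statement.

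It then remains to compute the eigenvalues of each $B_j$. The key (and slightly surprising) point is the behaviour of its invariants: $\operatorname{tr}B_j=2(1-\theta)-\frac{\theta^2\sigma_j^2}{\lambda n}$, while in $\det B_j$ the cross term $\frac{\theta\sigma_j}{\lambda n}\cdot\theta(\theta-1)\sigma_j$ cancels exactly against the off-diagonal contribution, leaving $\det B_j=(1-\theta)^2$. Solving $t^2-(\operatorname{tr}B_j)t+(1-\theta)^2=0$ produces a discriminant that factors cleanly as $\frac{\theta^2\sigma_j^2}{(\lambda n)^2}\big(\theta^2\sigma_j^2-4(1-\theta)\lambda n\big)$, and the two roots are precisely the $\pm$ pair displayed in the lemma. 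I expect the main obstacle to be organisational rather than conceptual: one must set up the permutation correctly and keep track of the unequal dimensions $d$ and $N$, the surplus $d+N-2p$ directions being exactly what supply the single eigenvalue $1-\theta$. A fully equivalent route that sidesteps the explicit permutation is to compute $\det\big(tI-W^TG_3(\theta)W\big)$ directly by invoking Proposition~\ref{pr_det} on the active coordinates, which yields $\prod_{j=1}^p\big(t^2-(\operatorname{tr}B_j)t+(1-\theta)^2\big)\,(t-(1-\theta))^{d+N-2p}$ and the same eigenvalues.
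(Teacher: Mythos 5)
Your proposal is correct and follows essentially the same route as the paper: SVD conjugation by $W=\bigl(\begin{smallmatrix} U & 0 \\ 0 & V\end{smallmatrix}\bigr)$, reduction to $p$ small blocks plus $d+N-2p$ copies of the eigenvalue $1-\theta$, and the quadratic formula (your trace/determinant computation on the $2\times 2$ blocks $B_j$ is equivalent to the paper's use of Proposition~\ref{pr_det} on the permuted determinant). The only cosmetic difference is that the paper first splits off the affine part, computing the spectrum of $M_3(\theta)$ and then applying $\mu\mapsto(1-\theta)+\theta\mu$, whereas you carry the $(1-\theta)I$ inside the blocks and exploit the clean cancellation $\det B_j=(1-\theta)^2$; both yield the same characteristic polynomial $t^2-\bigl(2(1-\theta)-\tfrac{\theta^2\sigma_j^2}{\lambda n}\bigr)t+(1-\theta)^2$ per singular value.
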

\begin{proof}
Consider the matrix 
$
M_3(\theta)\stackrel{\rm def}{=}
\left(\begin{array}{cc} 0 & \frac{1}{\lambda n}A \vspace{.15cm}\\ 
(\theta-1)A^T & -\frac{\theta}{\lambda n}A^TA \end{array}\right).
$
Using the singular value decomposition of $A$, given in \eqref{svdA}, we can write 
$$
M_3(\theta)=
\left(\begin{array}{cc} U & 0 \\ 0 & V \end{array}\right)
\left(\begin{array}{cc} 0 & \frac{1}{\lambda n}\Sigma \vspace{.15cm}\\ 
(\theta-1)\Sigma^T & -\frac{\theta}{\lambda n}\Sigma^T\Sigma \end{array}\right)
\left(\begin{array}{cc} U^T & 0 \\ 0 & V^T \end{array}\right).
$$
Therefore, the eigenvalues of $M_3(\theta)$ are the same as the ones of 
\begin{eqnarray}
& \left(\begin{array}{cc} 0 & \frac{1}{\lambda n}\Sigma \vspace{.15cm}\\ 
(\theta-1)\Sigma^T & -\frac{\theta}{\lambda n}\Sigma^T\Sigma \end{array}\right)
= 
\left(\begin{array}{ccccccc} 
0 & & 0 & &   -c\widetilde\Sigma & & 0 \\ 
0 & & 0 & &  0  & & 0  \\ 
(\theta-1)\widetilde\Sigma & & 0 & &  \theta c\widetilde\Sigma^2 & & 0 \\ 
0 & & 0 & &  0 & & 0 
\end{array}\right) 
& \hspace{-.5cm}\begin{array}{c} p \\ d-p \\ p \\ N-p \end{array}  \nonumber \\
& \begin{array}{ccccc}
\hspace{5.28cm} &  p & \hspace{.6cm} d-p & \hspace{.35cm} p  & \hspace{.125cm}N-p   \nonumber \\
\end{array}
\end{eqnarray}
where $c=-\dfrac{1}{\lambda n}$ and $\widetilde\Sigma$ is defined in \eqref{Sigma_A}. 
The characteristic polynomial of this matrix is 
$$
\begin{array}{rcl}
p_{\theta}(t)&=&\det\left(\begin{array}{ccccccc} 
tI & & 0 & &   c\widetilde\Sigma & &  0 \\ 0 & & tI & &  0  & & 0  \\ 
(1-\theta)\widetilde\Sigma & & 0 & &  tI-\theta c\widetilde\Sigma^2 & & 0 
\\ 0 & & 0 & &  0 & & tI 
\end{array}\right)
\\& =&\det\left(\begin{array}{cccc} 
tI & c\widetilde\Sigma & 0 & 0 \\ (1-\theta)\widetilde\Sigma & tI-\theta c\widetilde\Sigma^2 &  0  & 0  \\ 
0 & 0 &  tI & 0 \\ 0 & 0 &  0 & tI 
\end{array}\right). 
\end{array}
$$
Using Proposition \ref{pr_det} and denoting $q=N+d-2p$, we obtain 
$$
p_{\theta}(t)=t^{q}\prod_{j=1}^{p}\Big(t(t-\theta c\sigma_j^2)-c(1-\theta)\sigma_j^2\Big)=
t^{q}\prod_{j=1}^{p}\left(t^2+\dfrac{\theta\sigma_j^2}{\lambda n}t+
\dfrac{(1-\theta)\sigma_j^2}{\lambda n}\right).
$$
Thus, the eigenvalues of $M_3(\theta)$ are 
$$
\dfrac{1}{2\lambda n}
\left\{-\theta\sigma_j^2\pm
\sigma_j\sqrt{\theta^2\sigma_j^2-4(1-\theta)\lambda n}
\;,\quad j=1,\ldots,p \right\}\cup\{0\},
$$
so that the eigenvalues of $G_3(\theta)=(1-\theta)I+\theta M_3(\theta)$ are 
$$
\dfrac{1}{2\lambda n}
\left\{2(1-\theta)\lambda n-\theta^2\sigma_j^2\pm
\theta\sigma_j\sqrt{\theta^2\sigma_j^2-4(1-\theta)\lambda n}
\;,\quad j=1,\ldots,p \right\}\cup\{1-\theta\},
$$
giving the desired result.
\end{proof}

\medskip

Figure \ref{fig_eigqtz1} illustrates, in the complex plane, the spectrum of the matrix 
$G_3(\theta)$ for many different values of $\theta$. We used $n=250$, $d=13$, $m=1$ 
(therefore $N=250$), $\lambda=0.3$ and a random matrix $A\in\mathbb{R}^{d\times N}$. 
The pictures point out the fact that for some range of $\theta$ the spectrum is contained in 
a circle and for other values of $\theta$ some of the eigenvalues remain in a circle while others 
are distributed along the real line, moving monotonically as this parameter changes. These 
statements will be proved in the sequel. 

\begin{figure}[htbp]
\centering
\includegraphics[scale=0.248]{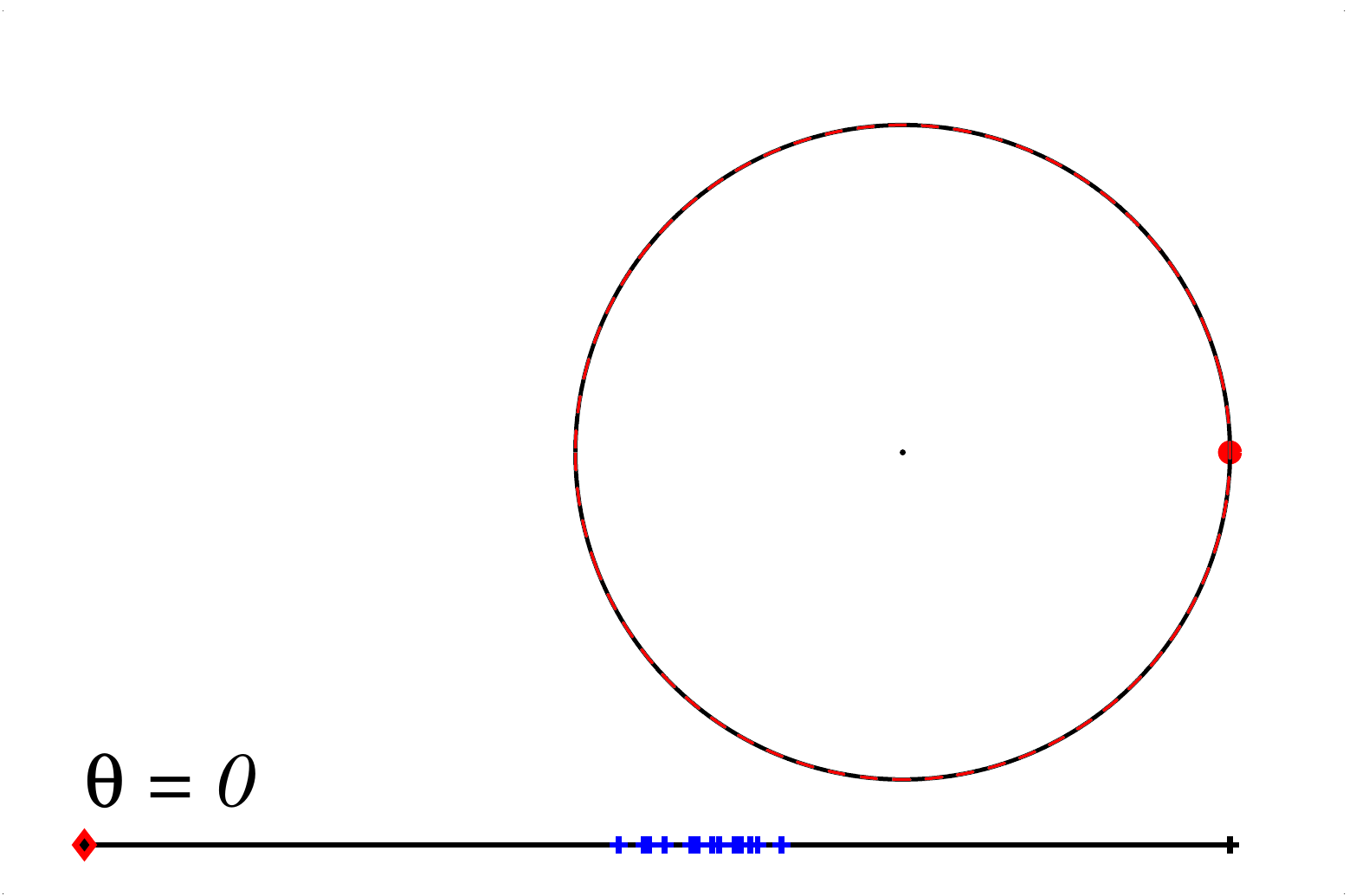}
\includegraphics[scale=0.248]{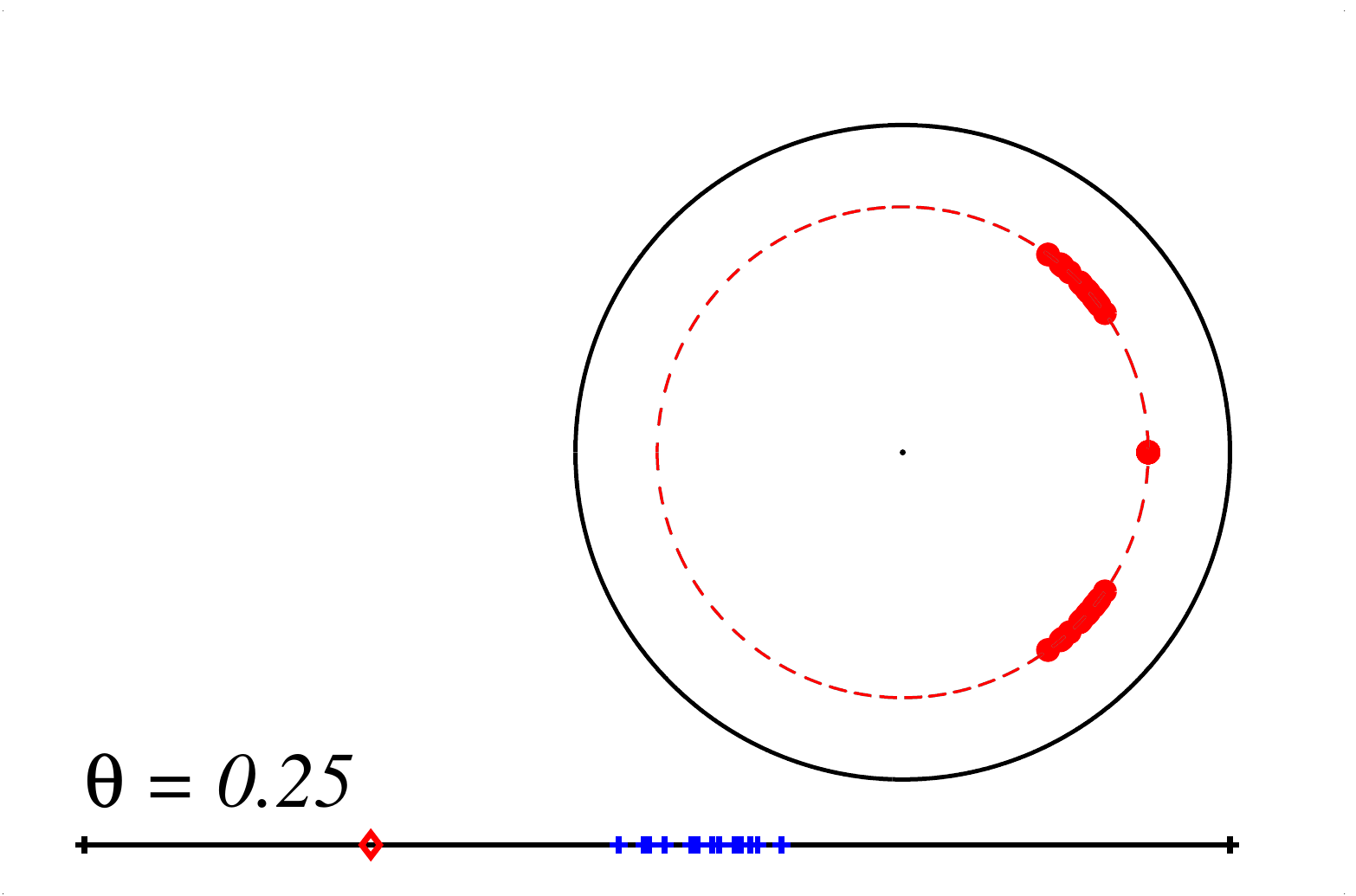}
\includegraphics[scale=0.248]{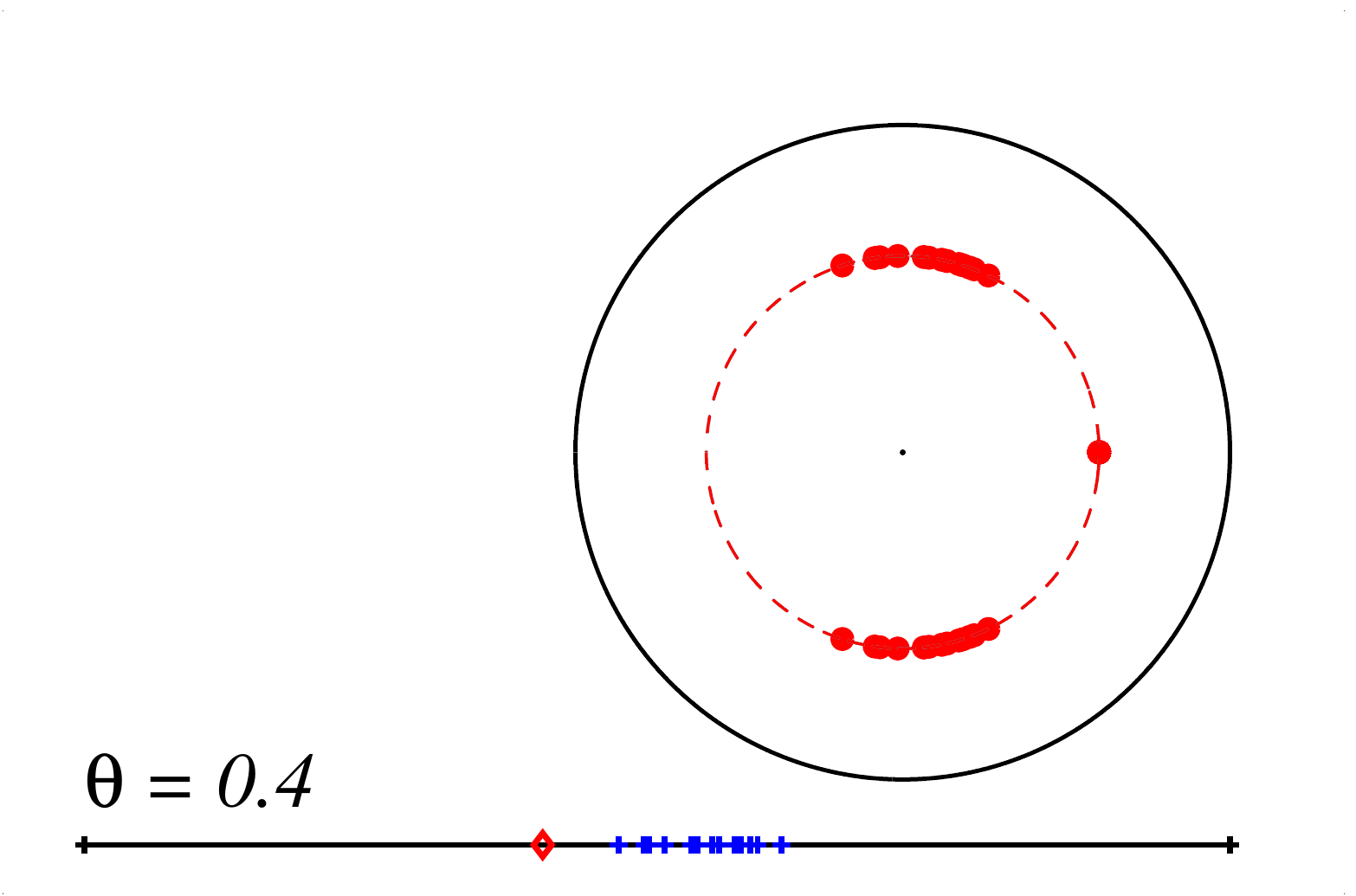}
\includegraphics[scale=0.248]{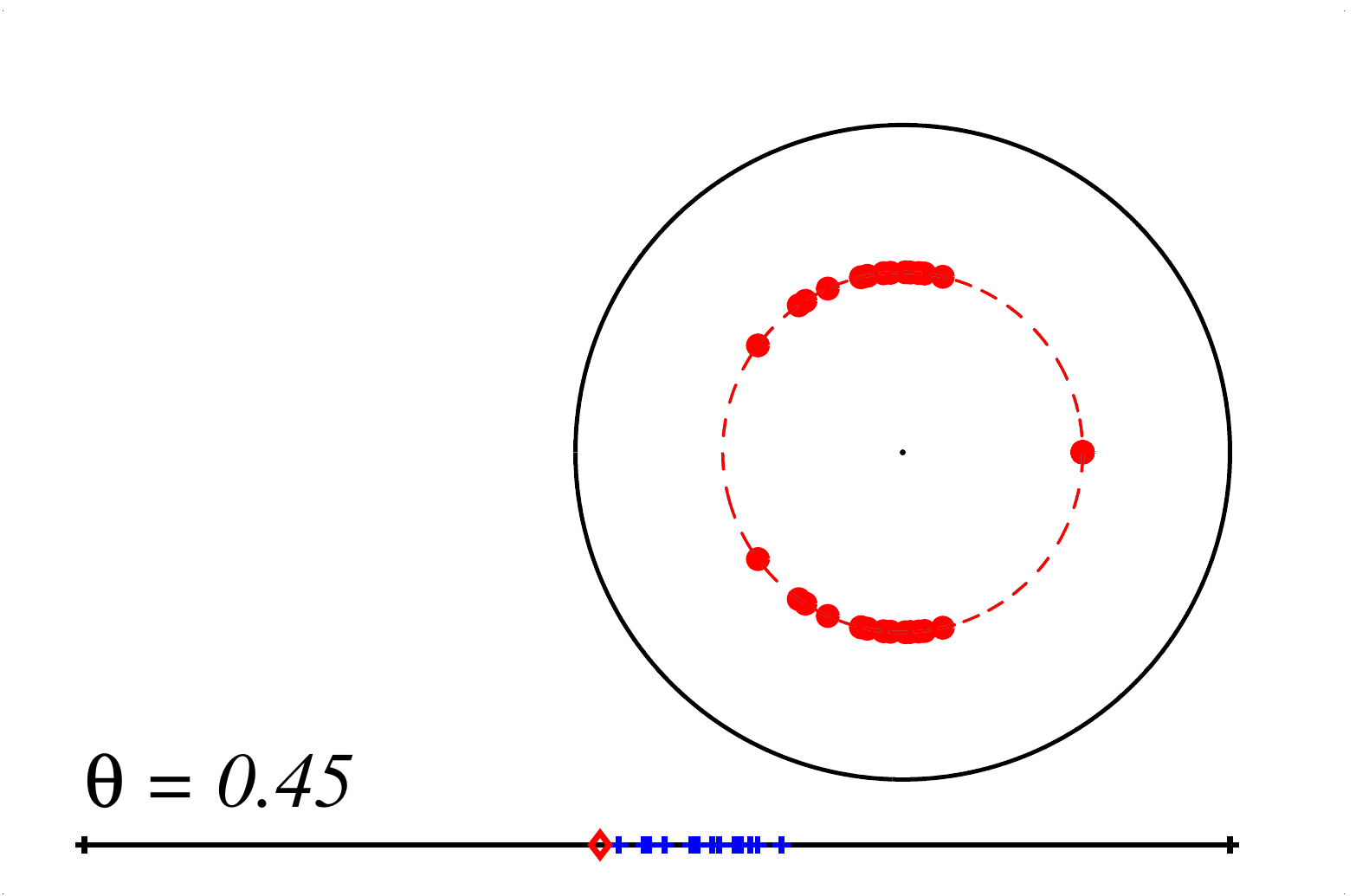}
\includegraphics[scale=0.248]{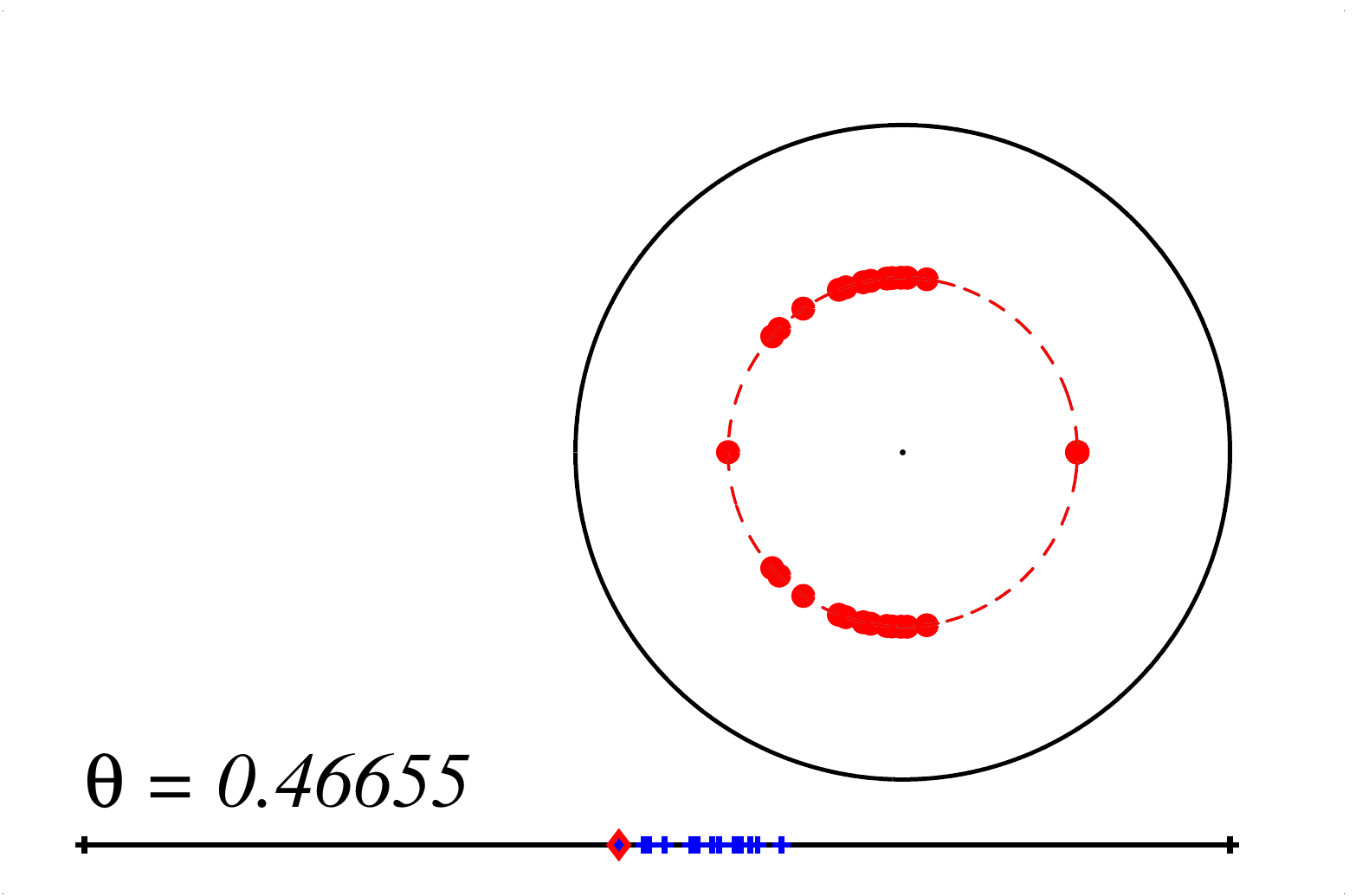}
\includegraphics[scale=0.248]{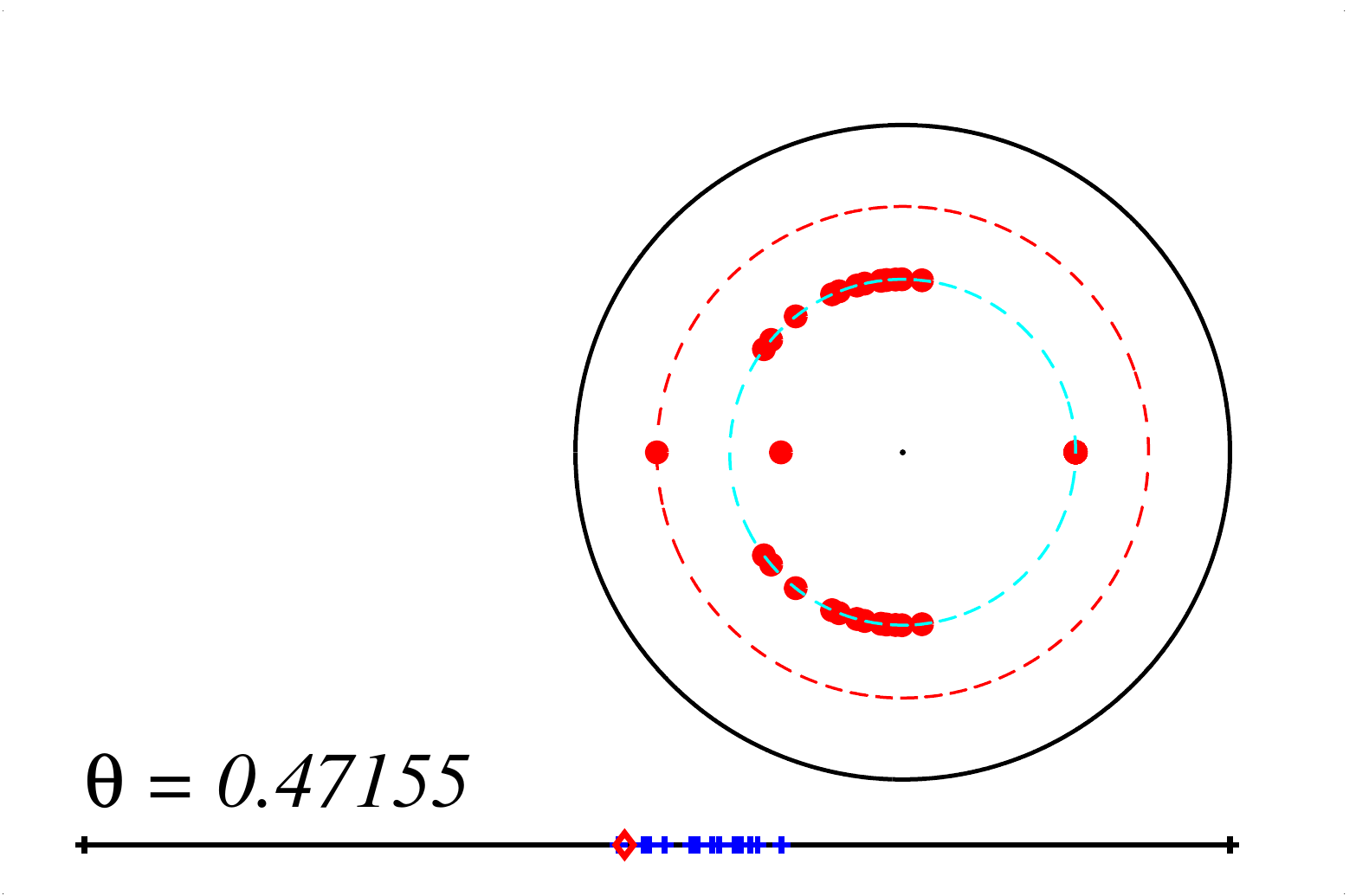}
\includegraphics[scale=0.248]{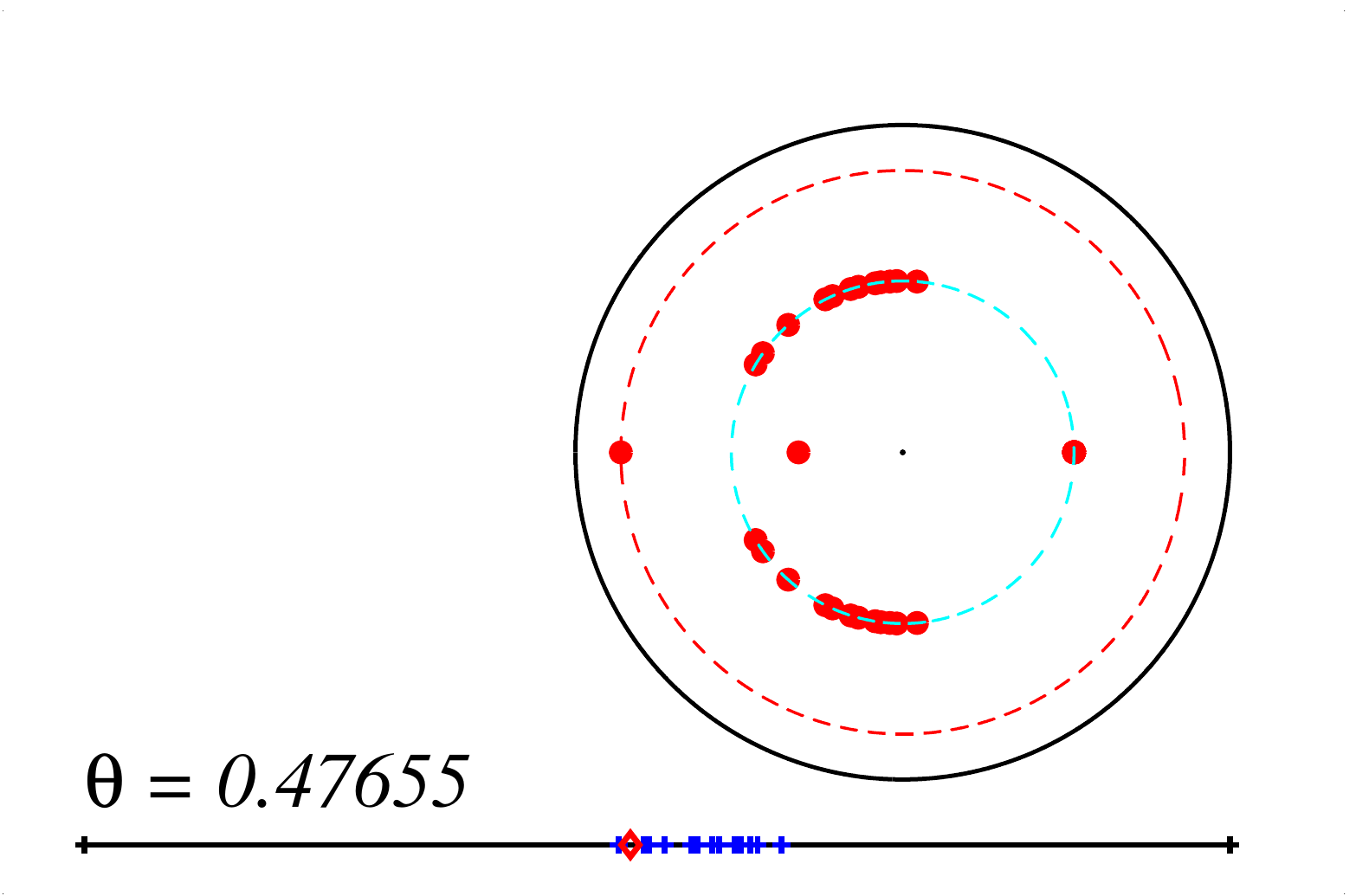}
\includegraphics[scale=0.248]{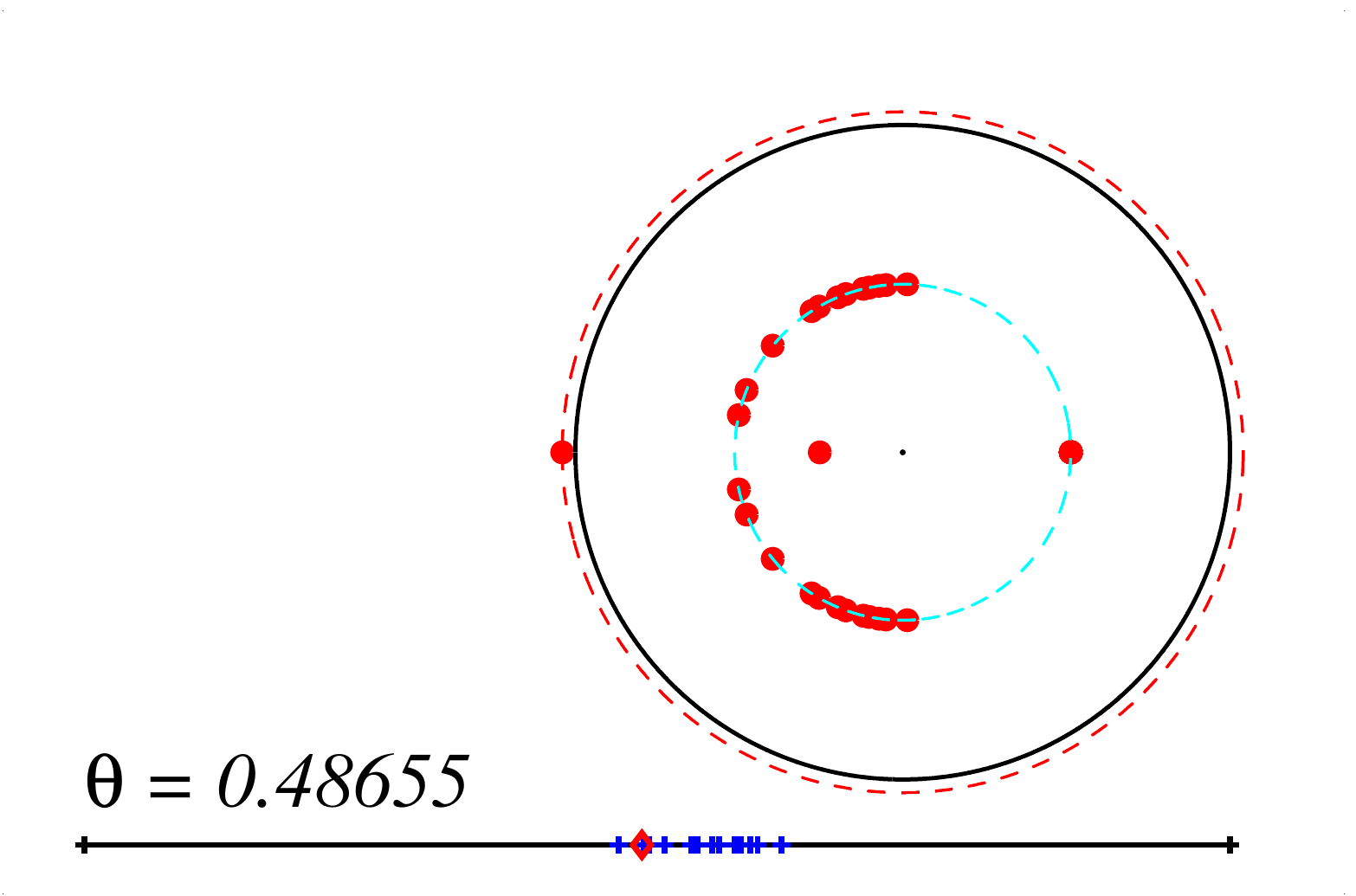}
\includegraphics[scale=0.248]{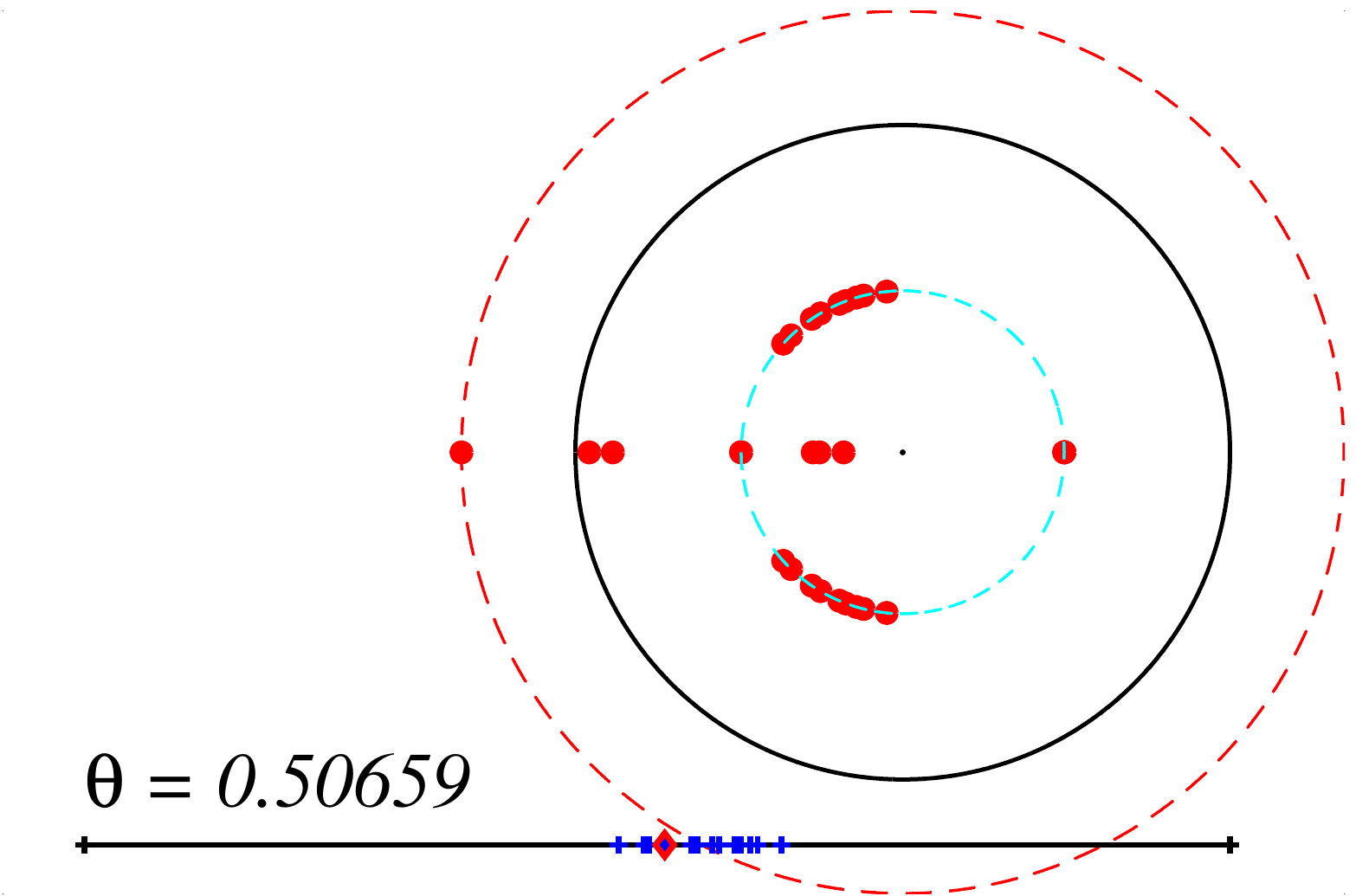}
\includegraphics[scale=0.248]{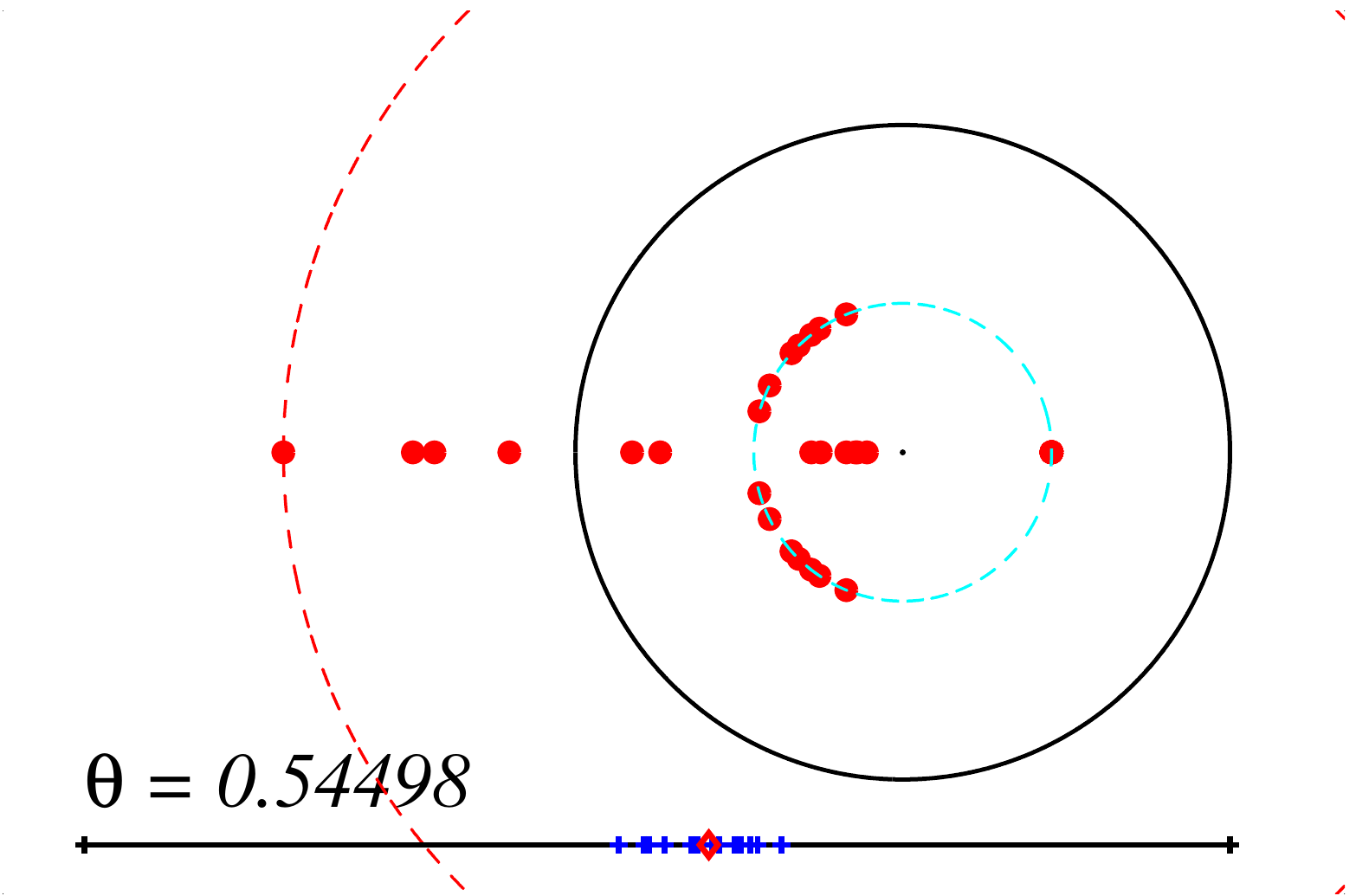}
\includegraphics[scale=0.248]{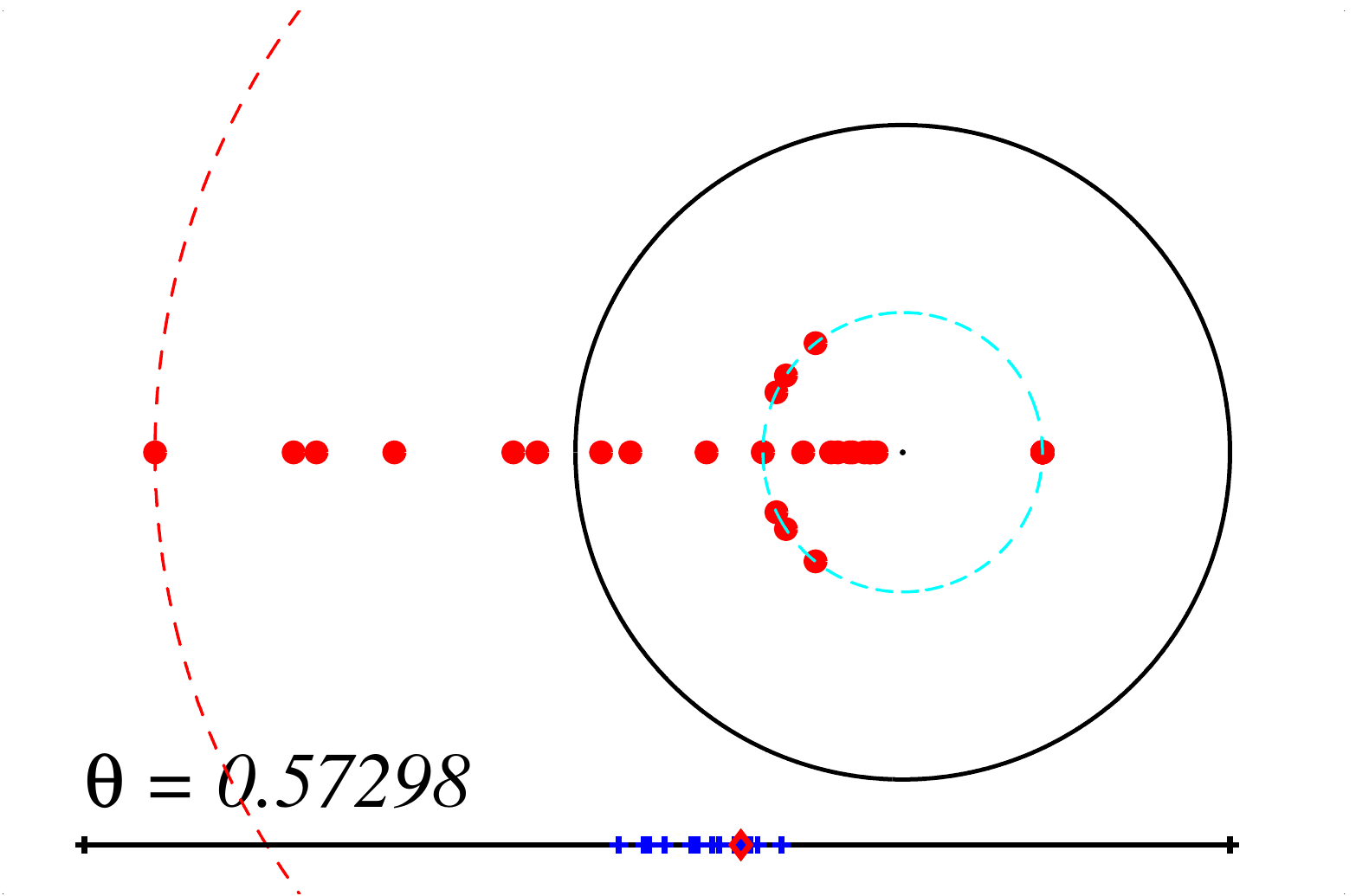}
\includegraphics[scale=0.248]{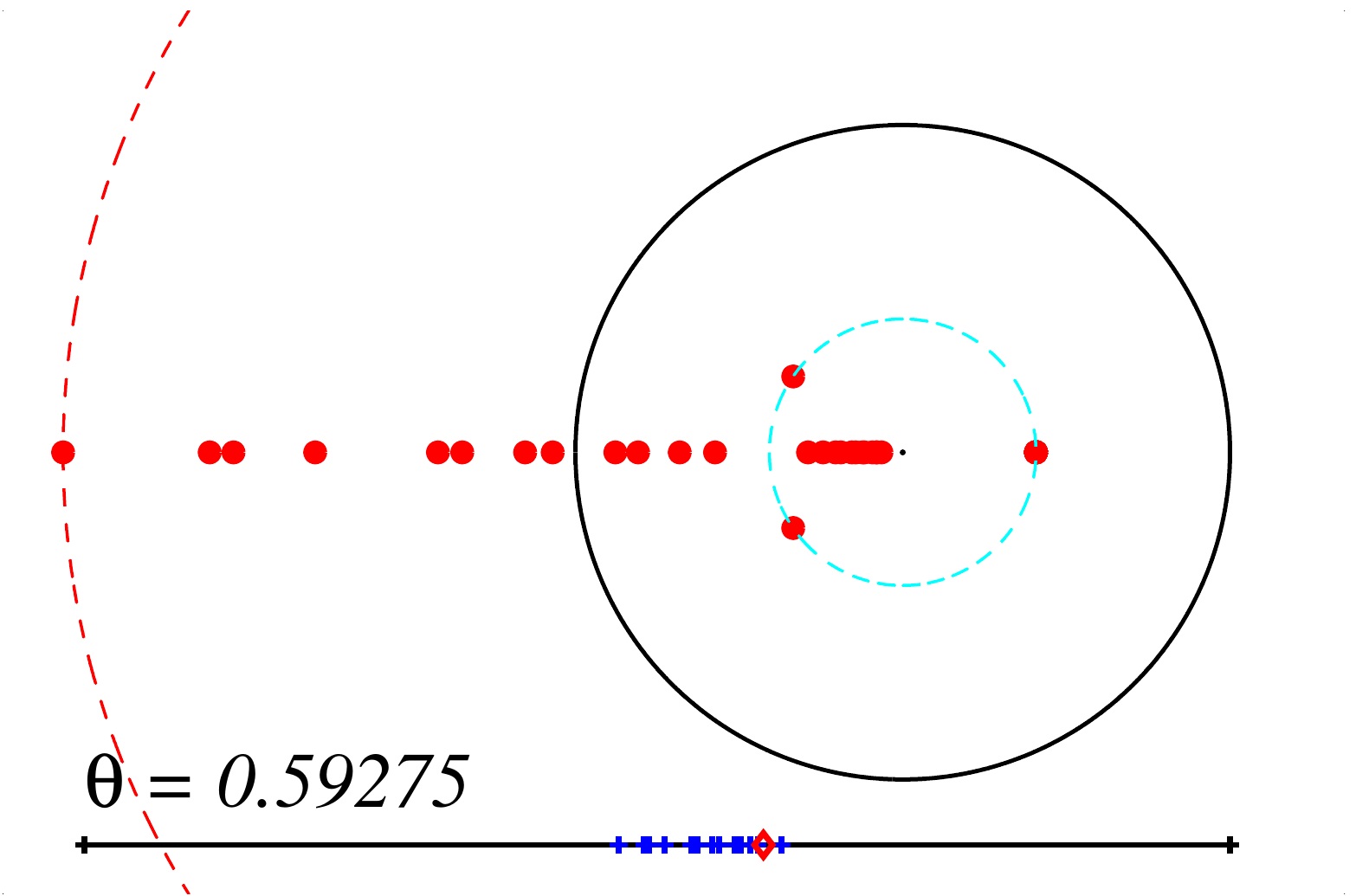}
\includegraphics[scale=0.248]{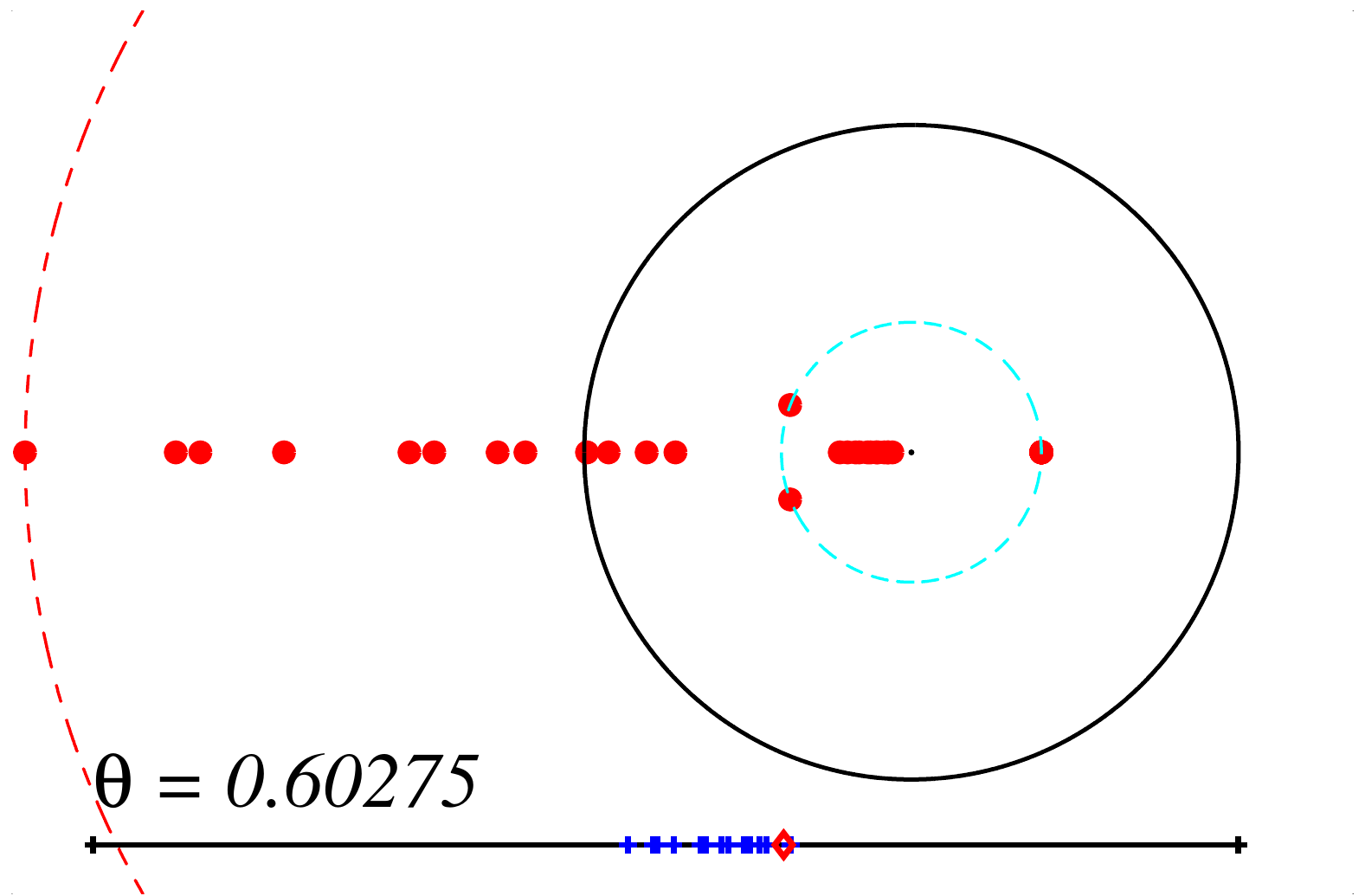}
\includegraphics[scale=0.248]{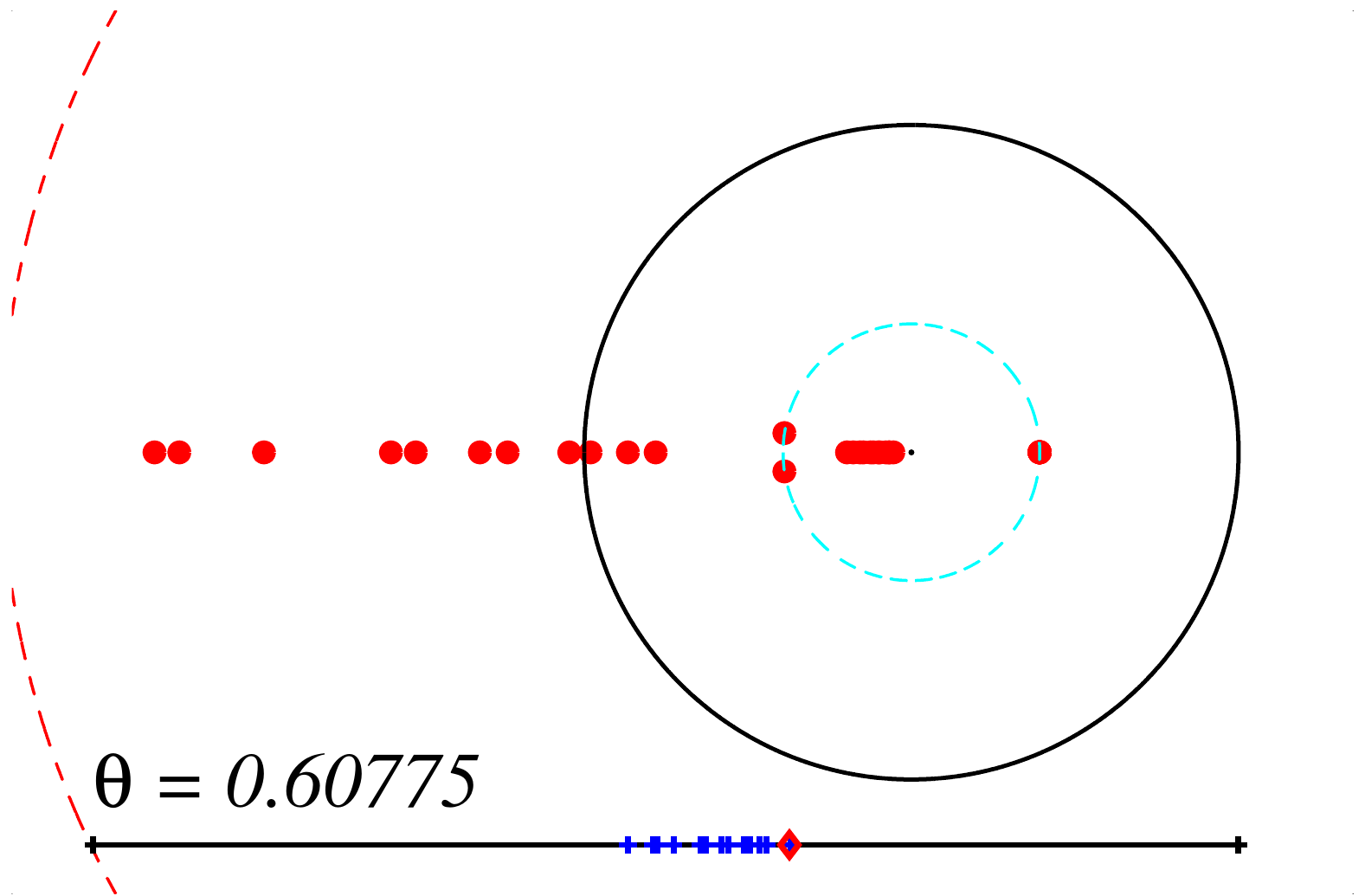}
\includegraphics[scale=0.248]{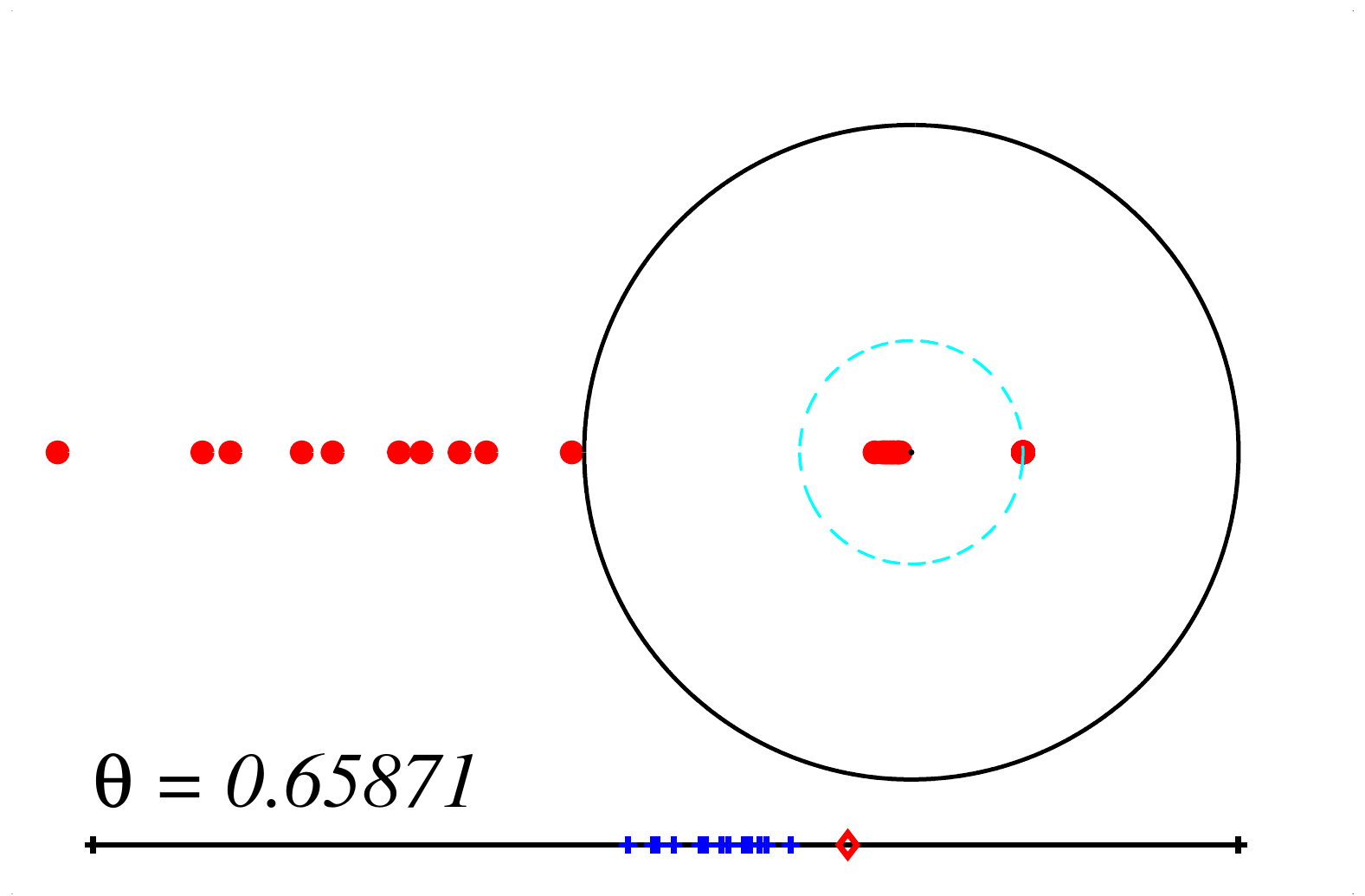}
\caption{The spectrum of $G_3(\theta)$ for many different values of $\theta$. The first 4 
pictures satisfying the condition $\theta^2\sigma_1^2-4(1-\theta)\lambda n<0$; in the 
fifth picture we have $\theta^2\sigma_1^2-4(1-\theta)\lambda n=0$ and the remaining ones 
represent the case where $\theta^2\sigma_1^2-4(1-\theta)\lambda n>0$.
\redcolor{The straight line represents (in a different scale) the interval $[0,1]$ on which are ploted 
some specific values of $\theta$ (blue marks), defined in \eqref{thetas}. The red diamond 
corresponds to the current value of $\theta$.}}
\label{fig_eigqtz1}
\end{figure}

In what follows, let us consider the functions $\delta_j:[0,1]\to\mathbb{R}$ defined by 
\begin{equation}
\label{deltas}
\delta_j(\theta)=\theta^2\sigma_j^2-4(1-\theta)\lambda n.
\end{equation}

The following straightforward result brings some basic properties of them, illustrated in 
Figure~\ref{fig_deltas}. 
\begin{lemma}
\label{lm_deltas}
Each function $\delta_j$, $j=1,\ldots,p$, is strictly increasing, from $-4\lambda n$ 
to $\sigma_j^2$ as $\theta$ goes from zero to $1$. Furthermore, these functions are 
sorted in decreasing order, $\delta_1\geq\delta_2\geq\cdots\geq\delta_p$, and their zeros, 
\begin{equation}
\label{thetas}
\bar\theta_j\stackrel{\rm def}{=}
\dfrac{-2\lambda n+2\textstyle\sqrt{\lambda n(\lambda n+\sigma_j^2)}}{\sigma_j^2},  
\end{equation}
are sorted in increasing order: 
$0<\bar\theta_1\leq\bar\theta_2\leq\cdots\leq\bar\theta_p<1$. 
\end{lemma}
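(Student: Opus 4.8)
The plan is to treat each $\delta_j$ as the quadratic $\delta_j(\theta)=\sigma_j^2\theta^2+4\lambda n\,\theta-4\lambda n$, obtained by expanding the definition, and then to read off all the claimed properties directly from its coefficients. First I would establish strict monotonicity on $[0,1]$ by differentiating: $\delta_j'(\theta)=2\sigma_j^2\theta+4\lambda n$, which is strictly positive for every $\theta\geq 0$ since $\sigma_j,\lambda,n>0$. Evaluating at the endpoints gives $\delta_j(0)=-4\lambda n$ and $\delta_j(1)=\sigma_j^2$, so together with continuity this yields the claim that $\delta_j$ sweeps strictly from $-4\lambda n$ up to $\sigma_j^2$ as $\theta$ runs from $0$ to $1$.

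For the decreasing ordering $\delta_1\geq\cdots\geq\delta_p$, I would fix $\theta\in[0,1]$ and observe that in $\sigma_j^2\theta^2+4\lambda n\,\theta-4\lambda n$ only the leading term depends on $j$; since $\theta^2\geq 0$ and $\sigma_1^2\geq\sigma_2^2\geq\cdots\geq\sigma_p^2$ by the ordering of the singular values in \eqref{Sigma_A}, the values $\delta_j(\theta)$ are nonincreasing in $j$, which is exactly the asserted pointwise inequality.

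For the zeros I would apply the quadratic formula to $\sigma_j^2\theta^2+4\lambda n\,\theta-4\lambda n=0$; the two roots are $(-2\lambda n\pm 2\sqrt{\lambda n(\lambda n+\sigma_j^2)})/\sigma_j^2$, and the one with the plus sign is precisely the stated $\bar\theta_j$. Positivity $\bar\theta_j>0$ reduces to $\sqrt{\lambda n(\lambda n+\sigma_j^2)}>\lambda n$, i.e. to $\lambda n\,\sigma_j^2>0$, which holds. That $\bar\theta_j<1$ then follows with no further computation: since $\delta_j$ is continuous and strictly increasing with $\delta_j(0)<0<\delta_j(1)$, it has a unique root, that root lies in the open interval $(0,1)$, and by uniqueness it must coincide with $\bar\theta_j$.

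The only part that needs a small idea rather than pure computation is the increasing ordering of the zeros, and here I would avoid differentiating the explicit root formula in $\sigma_j^2$ (which is messy) and instead exploit the pointwise ordering of the functions already established. Since $\delta_{j+1}(\theta)\leq\delta_j(\theta)$ for all $\theta$, evaluating at $\theta=\bar\theta_j$ gives $\delta_{j+1}(\bar\theta_j)\leq\delta_j(\bar\theta_j)=0$. Because $\delta_{j+1}$ is strictly increasing with unique zero $\bar\theta_{j+1}$, the inequality $\delta_{j+1}(\bar\theta_j)\leq 0$ forces $\bar\theta_j\leq\bar\theta_{j+1}$; chaining this over $j$ yields $0<\bar\theta_1\leq\cdots\leq\bar\theta_p<1$. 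I expect this monotonicity-transfer step to be the only place requiring care, as everything else is direct verification from the coefficients of the quadratic.
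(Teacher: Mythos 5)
Your proof is correct, and it is complete at every point the lemma asserts: the expansion $\delta_j(\theta)=\sigma_j^2\theta^2+4\lambda n\,\theta-4\lambda n$, the positive derivative, the endpoint values, the pointwise ordering via $\delta_j(\theta)-\delta_{j+1}(\theta)=(\sigma_j^2-\sigma_{j+1}^2)\theta^2\geq 0$, the identification of $\bar\theta_j$ as the unique positive root, and the monotonicity-transfer argument $\delta_{j+1}(\bar\theta_j)\leq\delta_j(\bar\theta_j)=0\Rightarrow\bar\theta_j\leq\bar\theta_{j+1}$ are all sound. For comparison: the paper gives no proof at all, simply calling the result ``straightforward'' and pointing to a figure, so your write-up fills a genuine gap rather than paralleling an existing argument. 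One remark on economy: the ordering and the bounds on the zeros can also be read off at once by rationalizing the root formula, since with $u_j=\sigma_j^2/(\lambda n)$ one has
\begin{equation*}
\bar\theta_j=\frac{2\left(\sqrt{1+u_j}-1\right)}{u_j}=\frac{2}{1+\sqrt{1+u_j}},
\end{equation*}
which is manifestly in $(0,1)$ and decreasing in $u_j$, hence increasing in $j$; but your transfer argument is equally rigorous and avoids any algebraic manipulation of the roots, which is arguably the cleaner route.
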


\begin{figure}[htbp]
\centering
\includegraphics[scale=0.35]{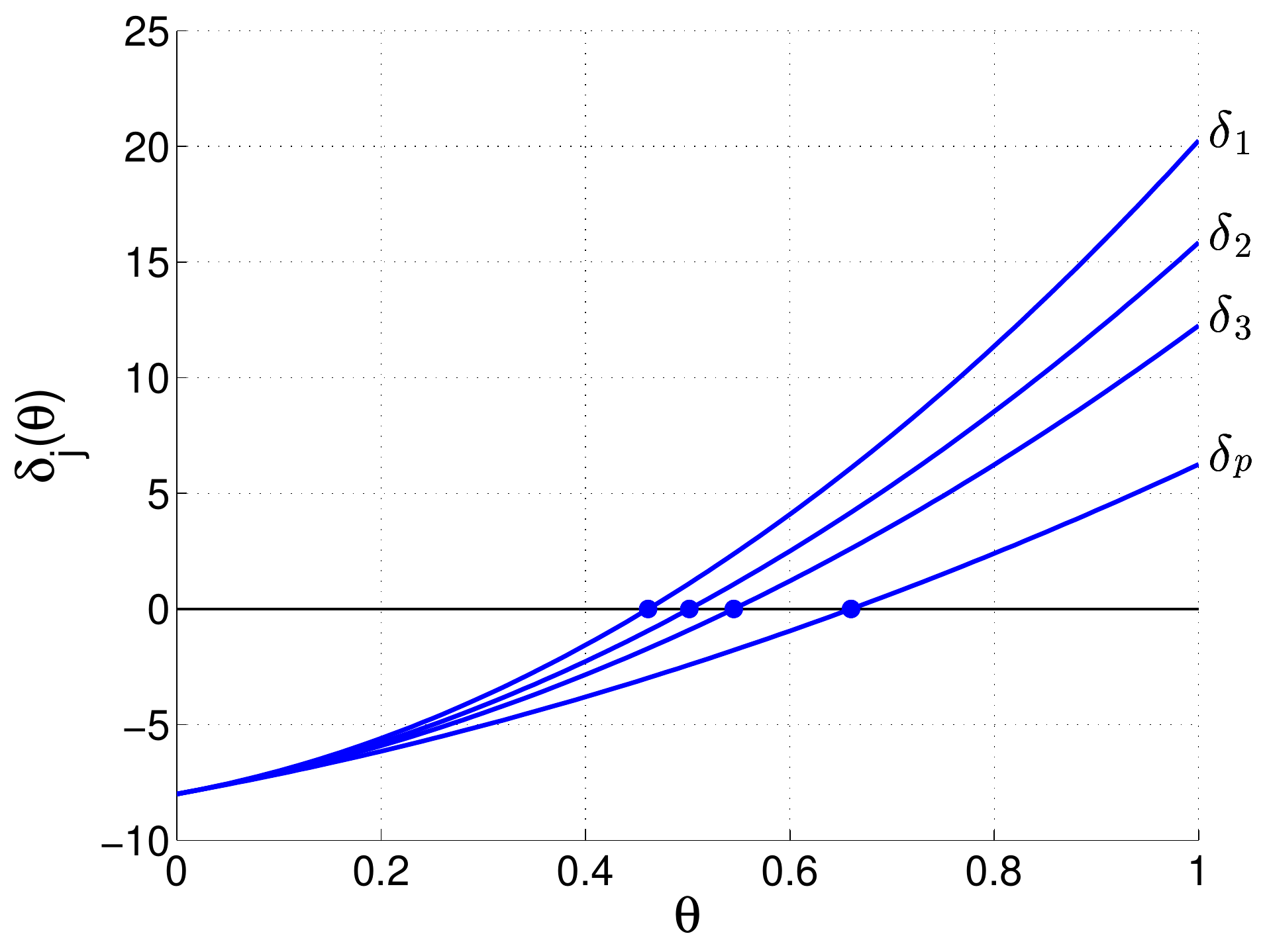}
\caption{The functions $\delta_j$, $j=1,\ldots,p$ and the properties stated in 
Lemma \ref{lm_deltas}.}
\label{fig_deltas}
\end{figure}

Now we shall study the spectrum of $G_3(\theta)$, given in Lemma \ref{lm_eigGqtz}. For this, 
let us denote 
\begin{equation}
\label{eig_G0}
\lambda_0(\theta)\stackrel{\rm def}{=}(1-\theta)
\end{equation}
and, for $j=1,\ldots,p$, 
\begin{equation}
\label{eig_G-}
\lambda_j^-(\theta)\stackrel{\rm def}{=}
\dfrac{1}{2\lambda n}\left(2(1-\theta)\lambda n-\theta^2\sigma_j^2-
\theta\sigma_j\textstyle\sqrt{\delta_j(\theta)}\right), 
\end{equation}
\begin{equation}
\label{eig_G+}
\lambda_j^+(\theta)\stackrel{\rm def}{=}
\dfrac{1}{2\lambda n}\left(2(1-\theta)\lambda n-\theta^2\sigma_j^2+
\theta\sigma_j\textstyle\sqrt{\delta_j(\theta)}\right)
\end{equation}
where $\delta_j$ is defined in \eqref{deltas}. 

\begin{lemma}
\label{eig_circle}
Consider $\bar\theta_1$ as defined in \eqref{thetas}. If $\theta\in[0,\bar\theta_1]$, then 
$$
|\lambda_j^+(\theta)|=|\lambda_j^-(\theta)|=1-\theta
$$
for all $j=1,\ldots,p$, which in turn implies that the spectral radius of $G_3(\theta)$ is 
$1-\theta$. 
\end{lemma}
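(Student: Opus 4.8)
The plan is to exploit the sign of the discriminants $\delta_j$ on the interval $[0,\bar\theta_1]$ together with a direct computation of the modulus of the resulting complex-conjugate eigenvalue pairs. First I would invoke Lemma~\ref{lm_deltas}: since the functions are sorted as $\delta_1\geq\delta_2\geq\cdots\geq\delta_p$ and $\delta_1$ is strictly increasing with its unique zero at $\bar\theta_1$, for every $\theta\in[0,\bar\theta_1]$ we have $\delta_j(\theta)\leq\delta_1(\theta)\leq 0$ for all $j=1,\ldots,p$. Consequently the square root $\sqrt{\delta_j(\theta)}$ appearing in \eqref{eig_G-} and \eqref{eig_G+} is purely imaginary, so $\lambda_j^-(\theta)$ and $\lambda_j^+(\theta)$ form a complex-conjugate pair (degenerating to a single real value when $\delta_j(\theta)=0$).

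Next I would compute $|\lambda_j^{\pm}(\theta)|^2$ directly. Writing $\sqrt{\delta_j(\theta)}=i\sqrt{-\delta_j(\theta)}$ with $-\delta_j(\theta)=4(1-\theta)\lambda n-\theta^2\sigma_j^2\geq 0$, the real part of $2\lambda n\,\lambda_j^{\pm}(\theta)$ is $2(1-\theta)\lambda n-\theta^2\sigma_j^2$ and the imaginary part is $\pm\theta\sigma_j\sqrt{-\delta_j(\theta)}$. Therefore
$$(2\lambda n)^2|\lambda_j^{\pm}(\theta)|^2=\left(2(1-\theta)\lambda n-\theta^2\sigma_j^2\right)^2+\theta^2\sigma_j^2\left(4(1-\theta)\lambda n-\theta^2\sigma_j^2\right).$$
Expanding both terms, the $\theta^4\sigma_j^4$ contributions cancel, and the two middle terms $\pm 4(1-\theta)\lambda n\,\theta^2\sigma_j^2$ of opposite sign also cancel, leaving exactly $4(1-\theta)^2(\lambda n)^2$. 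Dividing by $(2\lambda n)^2$ yields $|\lambda_j^{\pm}(\theta)|^2=(1-\theta)^2$, and since $\theta\in[0,\bar\theta_1]\subset[0,1)$ gives $1-\theta>0$, we obtain $|\lambda_j^+(\theta)|=|\lambda_j^-(\theta)|=1-\theta$.

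Finally, to pass from the eigenvalue moduli to the spectral radius, I would recall from Lemma~\ref{lm_eigGqtz} that the full spectrum of $G_3(\theta)$ is $\{\lambda_j^{\pm}(\theta):j=1,\ldots,p\}\cup\{\lambda_0(\theta)\}$ with $\lambda_0(\theta)=1-\theta$. Since every listed eigenvalue has modulus exactly $1-\theta$, the spectral radius equals $1-\theta$, as claimed.

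There is no serious obstacle here: the result is essentially an algebraic identity once the sign of the discriminant is settled by Lemma~\ref{lm_deltas}. The only points requiring care are the cancellation in the modulus computation and the mild degenerate case $\delta_j(\theta)=0$ (occurring at $\theta=\bar\theta_1$ for $j=1$), where $\lambda_j^+$ and $\lambda_j^-$ collapse to the single real value $-(1-\theta)$, whose modulus is still $1-\theta$, so the conclusion is unaffected.
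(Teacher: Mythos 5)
Your proof is correct and follows essentially the same route as the paper's: both use the nonpositivity of $\delta_j(\theta)$ on $[0,\bar\theta_1]$ to identify $\lambda_j^{\pm}(\theta)$ as a conjugate pair and then compute $|\lambda_j^{\pm}(\theta)|^2=\frac{1}{4\lambda^2n^2}\bigl(\bigl(2(1-\theta)\lambda n-\theta^2\sigma_j^2\bigr)^2-\theta^2\sigma_j^2\delta_j(\theta)\bigr)=(1-\theta)^2$. Your additional remarks (the explicit appeal to Lemma~\ref{lm_deltas} for the ordering of the $\delta_j$ and the degenerate case $\delta_j(\theta)=0$) are correct details that the paper leaves implicit.
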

\begin{proof}
Note that in this case we have $\delta_j(\theta)\leq 0$ for all $j=1,\ldots,p$. So, 
$$
\begin{array}{rcl}
|\lambda_j^+(\theta)|^2 & = & |\lambda_j^-(\theta)|^2 \vspace{.2cm} \\
& = & \dfrac{1}{4\lambda^2 n^2}
\left(\Big(2(1-\theta)\lambda n-\theta^2\sigma_j^2\Big)^2-
\theta^2\sigma_j^2\delta_j(\theta)\right) 
\vspace{.2cm} \\
& = & (1-\theta)^2,
\end{array}
$$
yielding the desired result since $\theta\leq 1$.
\end{proof}

\medskip

It can be shown that the parameter $\theta=\theta_1^*$, defined in Theorem \ref{th_mfp1}, 
satisfies the conditions of Lemma \ref{eig_circle}. So, the spectral radius of 
$G_3(\theta_1^*)$ is $1-\theta_1^*$, exactly the same spectral radius of 
$G_1(\theta_1^*)=(1-\theta_1^*)I+\theta_1^* M_1$. This is shown in Figure~\ref{fig_eigqtz2}, 
together with the spectrum of $G_2(\theta_2^*)=(1-\theta_2^*)I+\theta_2^* M_2$. We also show 
in this figure (the right picture) the spectrum of $G_3(\theta_3^*)$, where $\theta_3^*$ is 
the optimal parameter. This parameter will be determined later, in Theorem \ref{conv_qtz}. 

\begin{figure}[htbp]
\centering
\includegraphics[scale=0.342]{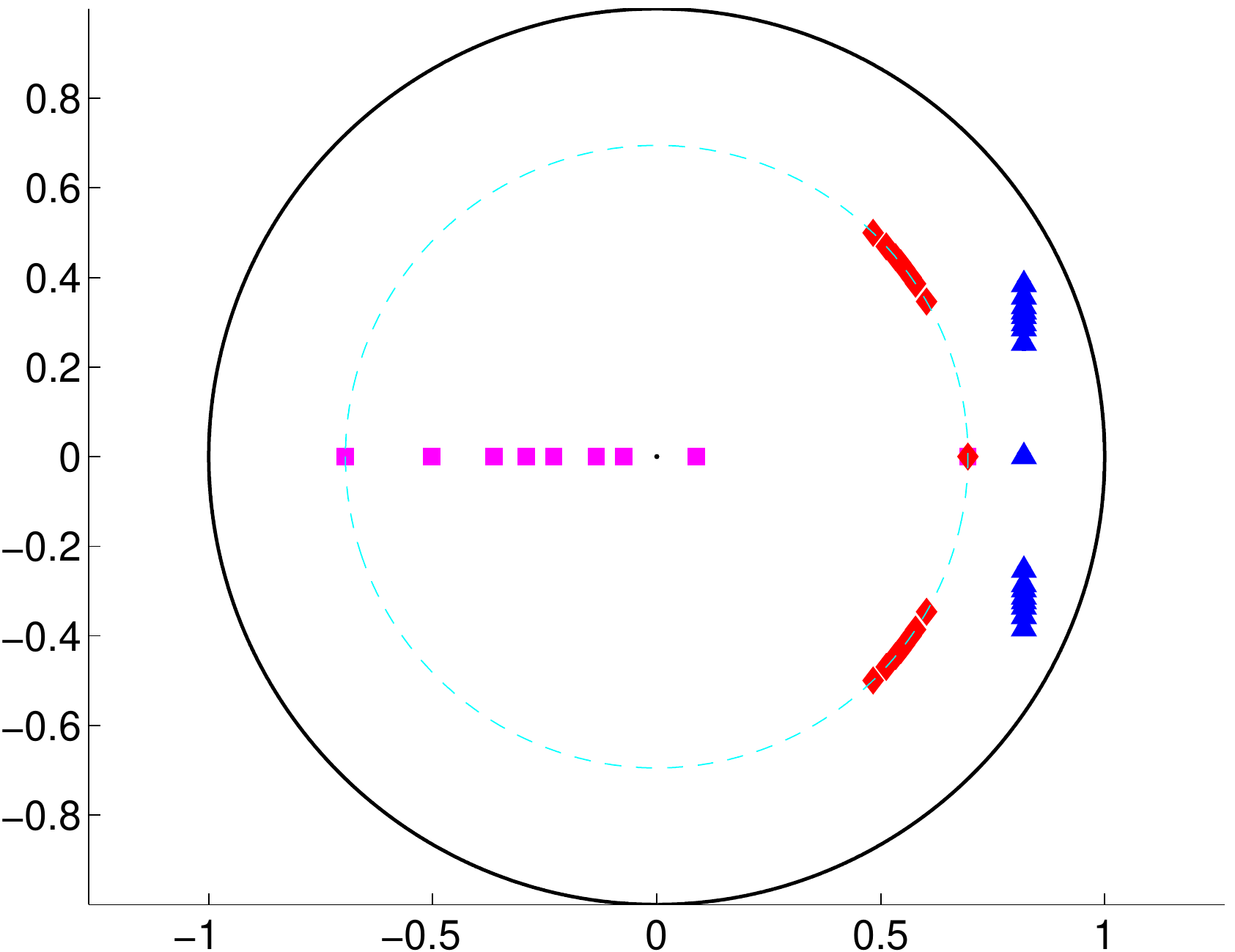}
\includegraphics[scale=0.342]{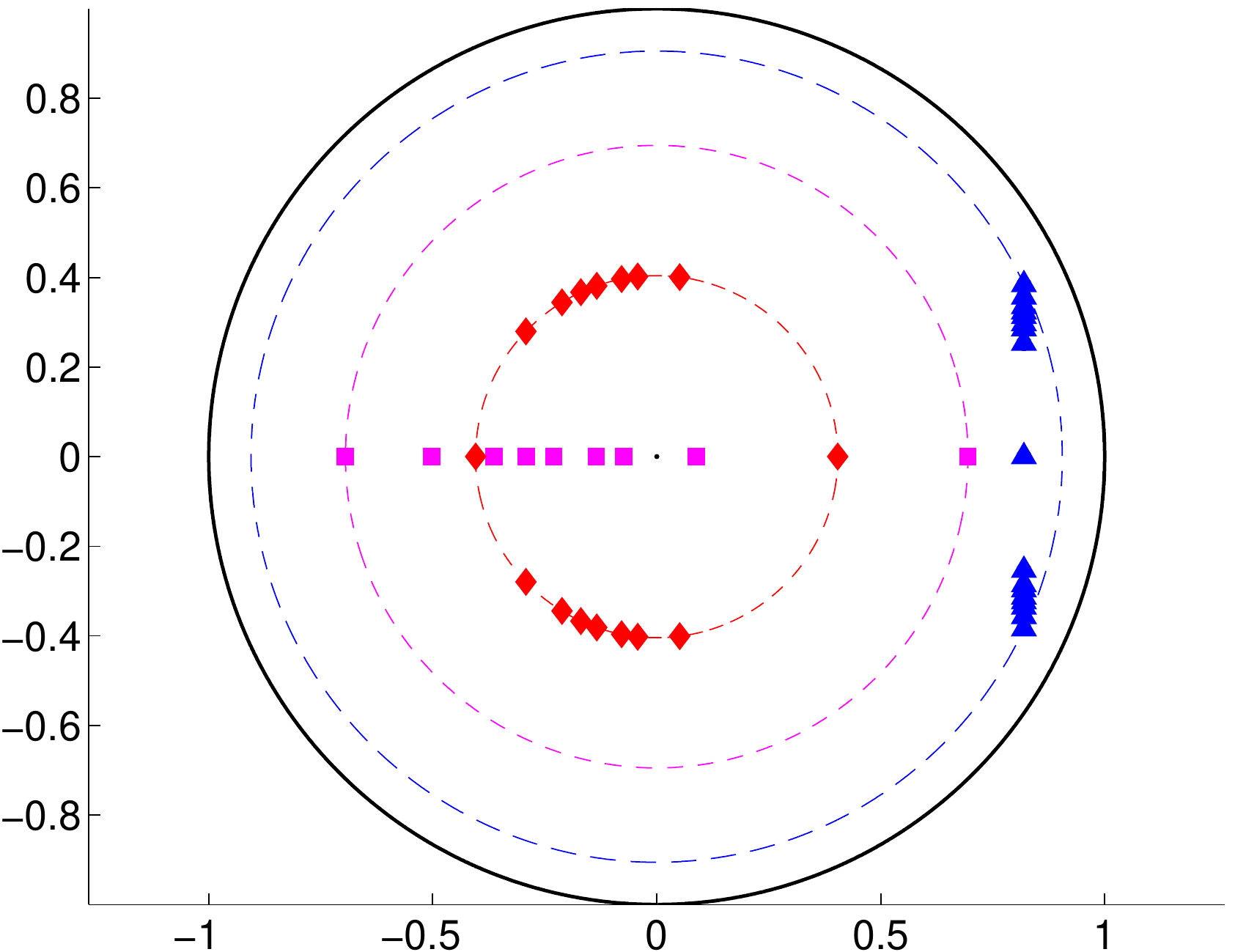}
\caption{On the left, the spectrum of $G_1(\theta_1^*)$ (magenta squares), $G_2(\theta_2^*)$ 
(blue triangles) and $G_3(\theta_1^*)$ (red diamonds). On the right, the spectrum of $G_3(\theta_3^*)$, where 
$\theta_3^*$ is the optimal parameter.}
\label{fig_eigqtz2}
\end{figure}

\begin{lemma}
\label{eig_real}
Consider $\bar\theta_j$, $j=1,\ldots,p$, as defined in \eqref{thetas}. 
If $\theta\in[\bar\theta_l,\bar\theta_{l+1}]$, then the eigenvalues 
$\lambda_j^+(\theta)$ and $\lambda_j^-(\theta)$, $j=1,\ldots,l$, are real 
numbers satisfying 
$$
\lambda_1^-(\theta)\leq\cdots\leq\lambda_l^-(\theta)\leq\theta-1
\leq\lambda_l^+(\theta)\leq\cdots\leq\lambda_1^+(\theta)\leq 0.
$$
On the other hand, for $j=l+1,\ldots,p$ we have 
$$
|\lambda_j^+(\theta)|=|\lambda_j^-(\theta)|=1-\theta
$$
Thus, the spectral radius of $G_3(\theta)$ is $-\lambda_1^-(\theta)$. 
\end{lemma}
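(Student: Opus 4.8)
The plan is to separate the eigenvalues according to the sign of $\delta_j(\theta)$, which is entirely controlled by Lemma \ref{lm_deltas}. Since the $\bar\theta_j$ are the increasingly ordered zeros of the strictly increasing functions $\delta_j$, the hypothesis $\theta\in[\bar\theta_l,\bar\theta_{l+1}]$ forces $\delta_j(\theta)\geq 0$ for $j=1,\ldots,l$ and $\delta_j(\theta)\leq 0$ for $j=l+1,\ldots,p$. For the indices $j\geq l+1$ the radicand is nonpositive, so $\lambda_j^+$ and $\lambda_j^-$ form a complex-conjugate pair, and I would invoke verbatim the computation already carried out in Lemma \ref{eig_circle}, which gives $|\lambda_j^+(\theta)|=|\lambda_j^-(\theta)|=1-\theta$ and never uses the sign of $\delta_j$.

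For the real indices $j=1,\ldots,l$ the two governing quantities are the product and the sum of the roots. The same algebraic identity behind Lemma \ref{eig_circle} yields
$$
\lambda_j^+(\theta)\,\lambda_j^-(\theta)=(1-\theta)^2,
$$
while from \eqref{eig_G-} and \eqref{eig_G+} the sum is
$$
\lambda_j^+(\theta)+\lambda_j^-(\theta)=2(1-\theta)-\frac{\theta^2\sigma_j^2}{\lambda n}.
$$
When $\delta_j(\theta)\geq 0$ one has $\theta^2\sigma_j^2\geq 4(1-\theta)\lambda n$, so the sum is at most $-2(1-\theta)<0$; together with the positive product this shows both roots are real and negative. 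Writing $a=-\lambda_j^+(\theta)$ and $b=-\lambda_j^-(\theta)$, we have $a,b>0$, $ab=(1-\theta)^2$, and $a\leq b$ (because of the $\pm$), so $a\leq\sqrt{ab}=1-\theta\leq b$. Translating back gives $\lambda_j^-(\theta)\leq\theta-1\leq\lambda_j^+(\theta)\leq 0$ for every $j\leq l$.

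It remains to order the roots across $j$. Here the key observation is that the product $(1-\theta)^2$ is \emph{independent} of $j$, whereas the sum $2(1-\theta)-\theta^2\sigma_j^2/(\lambda n)$ is strictly decreasing in $\sigma_j$, hence increasing in $j$ (recall $\sigma_1\geq\cdots\geq\sigma_p$). For a pair of negative numbers with a fixed product, lowering the sum pulls the larger root toward $0$ and pushes the smaller one away: formally, on $a\in(0,1-\theta]$ the map $a\mapsto a+(1-\theta)^2/a$ is decreasing, so a larger value of $a+b$ forces a smaller $a$ and a larger $b$. This produces $\lambda_1^+\geq\cdots\geq\lambda_l^+$ and $\lambda_1^-\leq\cdots\leq\lambda_l^-$, hence the full chain
$$
\lambda_1^-(\theta)\leq\cdots\leq\lambda_l^-(\theta)\leq\theta-1\leq\lambda_l^+(\theta)\leq\cdots\leq\lambda_1^+(\theta)\leq 0.
$$

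Finally, for the spectral radius I would simply compare moduli: the eigenvalue $1-\theta$ and every conjugate pair with $j\geq l+1$ have modulus $1-\theta$; each real $\lambda_j^+\in[\theta-1,0]$ has modulus at most $1-\theta$; and each real $\lambda_j^-\leq\theta-1$ has modulus at least $1-\theta$, the largest being $-\lambda_1^-(\theta)$. Therefore the spectral radius equals $-\lambda_1^-(\theta)$. I expect the only delicate point to be the cross-$j$ monotonicity argument; everything else is either a direct appeal to Lemma \ref{eig_circle} or an elementary manipulation of the sum and product of the two roots.
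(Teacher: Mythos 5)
Your proof is correct, and its skeleton matches the paper's: split the indices by the sign of $\delta_j(\theta)$ using Lemma \ref{lm_deltas}, reuse the computation of Lemma \ref{eig_circle} for the complex-conjugate pairs with $j\geq l+1$, then order the real eigenvalues. Where you genuinely differ is in the treatment of the real indices $j\leq l$. The paper works directly on the explicit root formulas \eqref{eig_G-}--\eqref{eig_G+}: it verifies $\lambda_j^+(\theta)-(\theta-1)=\frac{\sqrt{\delta_j(\theta)}}{2\lambda n}\left(\theta\sigma_j-\sqrt{\delta_j(\theta)}\right)\geq 0$, proves $\lambda_j^+(\theta)\leq 0$ by a squaring argument, shows $\lambda_j^-(\theta)\leq\theta-1$ by another direct computation, and gets the cross-$j$ ordering of the $\lambda_j^+$ from monotonicity of $s\mapsto -s^2+s\sqrt{s^2-a}$ on $[\sqrt{a},\infty)$, with a separate argument for the $\lambda_j^-$. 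You instead package everything through Vieta: the product $\lambda_j^+(\theta)\lambda_j^-(\theta)=(1-\theta)^2$ (the same polynomial identity that drives Lemma \ref{eig_circle}) and the sum $2(1-\theta)-\theta^2\sigma_j^2/(\lambda n)\leq -2(1-\theta)$ give at once that both roots are negative and straddle $\theta-1$ (via $a\leq\sqrt{ab}\leq b$), and the orderings of \emph{both} families across $j$ drop out of a single monotonicity fact about $a\mapsto a+(1-\theta)^2/a$ on $(0,1-\theta]$, since the product is $j$-independent while the sum is monotone in $\sigma_j$. This is arguably tidier: one identity plus one monotone map replace four separate algebraic verifications, and your explicit modulus comparison at the end is slightly more complete than the paper's, which asserts the spectral radius claim without detail. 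Two small points worth making explicit: your appeal to Lemma \ref{eig_circle} for $j\geq l+1$ is legitimate precisely because $\delta_j(\theta)\leq 0$ there (the underlying identity is sign-independent, but reading it as a squared modulus is not); and the strict inequality in ``sum $\leq -2(1-\theta)<0$'' uses $\theta<1$, which holds because $\theta\leq\bar\theta_{l+1}\leq\bar\theta_p<1$ by Lemma \ref{lm_deltas}.
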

\begin{proof}
We have $\delta_j(\theta)\geq 0$ for all $j=1,\ldots,l$. So, 
$$
\begin{array}{rcl}
\lambda_j^+(\theta)-(\theta-1) & = & 
\dfrac{1}{2\lambda n}\left(2(1-\theta)\lambda n-\theta^2\sigma_j^2+
\theta\sigma_j\sqrt{\delta_j(\theta)}\right)+1-\theta \vspace{.2cm} \\
& = & \dfrac{1}{2\lambda n}\left(4(1-\theta)\lambda n-\theta^2\sigma_j^2+
\theta\sigma_j\sqrt{\delta_j(\theta)}\right) \vspace{.2cm} \\
& = & \dfrac{1}{2\lambda n}\left(-\delta_j(\theta)+
\theta\sigma_j\sqrt{\delta_j(\theta)}\right) \vspace{.2cm} \\
& = & \dfrac{\sqrt{\delta_j(\theta)}}{2\lambda n}
\left(\theta\sigma_j-\sqrt{\delta_j(\theta)}\right)\geq 0.
\end{array}
$$
Furthermore, 
$$
\Big(\theta\sigma_j\textstyle\sqrt{\delta_j(\theta)}\Big)^2 = 
\theta^2\sigma_j^2\Big(\theta^2\sigma_j^2-4(1-\theta)\lambda n\Big) 
\leq \Big(\theta^2\sigma_j^2-2(1-\theta)\lambda n\Big)^2.
$$
Since $\theta^2\sigma_j^2-2(1-\theta)\lambda n=\delta_j(\theta)+2(1-\theta)\lambda n\geq 0$, 
$$
\lambda_j^+(\theta)=\dfrac{1}{2\lambda n}\left(2(1-\theta)\lambda n-\theta^2\sigma_j^2+
\theta\sigma_j\textstyle\sqrt{\delta_j(\theta)}\right)\leq 0.
$$
Now, note that 
$$
\begin{array}{rcl}
\lambda_j^-(\theta)-(\theta-1) & = & 
\dfrac{1}{2\lambda n}\left(2(1-\theta)\lambda n-\theta^2\sigma_j^2-
\theta\sigma_j\sqrt{\delta_j(\theta)}\right)+1-\theta \vspace{.2cm} \\
& = & \dfrac{1}{2\lambda n}\Big(-\delta_j(\theta)-
\theta\sigma_j\sqrt{\delta_j(\theta)}\Big) \leq 0.
\end{array}
$$
Moreover, from Lemma \ref{lm_deltas} and the definition of $\sigma_j$, we have 
$\delta_1(\theta)\geq\cdots\geq\delta_l(\theta)$ and 
$\sigma_1\geq\cdots\geq\sigma_l$, which imply that 
$\lambda_1^-(\theta)\leq\cdots\leq\lambda_l^-(\theta)$. 
The inequality $\lambda_l^+(\theta)\leq\cdots\leq\lambda_1^+(\theta)$ 
follows from the fact that the function $$[\sqrt{a},\infty)\ni s\mapsto -s^2+s\sqrt{s^2-a}$$ 
is increasing. 
Finally, for $j=l+1,\ldots,p$ we have $\delta_j(\theta)\leq 0$ and, by the same 
argument used in Lemma \ref{eig_circle}, we conclude that 
$
|\lambda_j^+(\theta)|=|\lambda_j^-(\theta)|=1-\theta.
$
\end{proof}

\medskip

From Lemmas \ref{eig_circle} and \ref{eig_real} we can conclude that $\bar\theta_1$ is 
the threshold value for $\theta$ after which the eigenvalues of $G_3(\theta)$ start departing 
the circle of radius $1-\theta$. The next result presents the threshold after which the 
eigenvalues are all real.

\begin{lemma}
\label{eig_real2}
Consider $\bar\theta_p$ as defined in \eqref{thetas}. 
If $\theta\geq\bar\theta_p$, then the eigenvalues 
$\lambda_j^+(\theta)$ and $\lambda_j^-(\theta)$, $j=1,\ldots,p$, are real 
numbers satisfying 
$$
\lambda_1^-(\theta)\leq\cdots\leq\lambda_p^-(\theta)\leq\theta-1
\leq\lambda_p^+(\theta)\leq\cdots\leq\lambda_1^+(\theta)\leq 0.
$$
Thus, the spectral radius of $G_3(\theta)$ is $-\lambda_1^-(\theta)$. 
\end{lemma}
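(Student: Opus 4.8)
The plan is to recognize this statement as the ``fully real'' special case of Lemma \ref{eig_real}, namely the situation $l=p$, and to reuse essentially verbatim the estimates established there. The only genuinely new ingredient is the opening observation, so I would begin by invoking Lemma \ref{lm_deltas}: since the zeros $\bar\theta_1\leq\cdots\leq\bar\theta_p$ of the increasing functions $\delta_1\geq\cdots\geq\delta_p$ are ordered and $\bar\theta_p$ is the largest of them, the hypothesis $\theta\geq\bar\theta_p$ forces $\delta_j(\theta)\geq 0$ for every $j=1,\ldots,p$. Consequently the quantity under each square root in \eqref{eig_G-}--\eqref{eig_G+} is nonnegative, so all eigenvalues $\lambda_j^\pm(\theta)$ are real. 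Throughout I would keep in mind that the admissible range of the accelerated method is $\theta\in(0,1]$, so that $1-\theta\geq 0$; this sign is precisely what the inequalities below rely on.

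With reality secured, I would establish the four inequalities exactly as in the proof of Lemma \ref{eig_real}, now for the full index range $j=1,\ldots,p$. First, the identity
$$
\lambda_j^+(\theta)-(\theta-1)=\frac{\sqrt{\delta_j(\theta)}}{2\lambda n}\left(\theta\sigma_j-\sqrt{\delta_j(\theta)}\right)
$$
is nonnegative because $\theta\leq 1$ gives $\theta^2\sigma_j^2\geq\delta_j(\theta)$, hence $\theta\sigma_j\geq\sqrt{\delta_j(\theta)}$; this yields $\lambda_j^+(\theta)\geq\theta-1$. Second, squaring and expanding shows $\theta^2\sigma_j^2\delta_j(\theta)\leq\left(\theta^2\sigma_j^2-2(1-\theta)\lambda n\right)^2$, and since $\theta^2\sigma_j^2-2(1-\theta)\lambda n=\delta_j(\theta)+2(1-\theta)\lambda n\geq 0$, one concludes $\theta\sigma_j\sqrt{\delta_j(\theta)}\leq\theta^2\sigma_j^2-2(1-\theta)\lambda n$, whence $\lambda_j^+(\theta)\leq 0$. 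Third,
$$
\lambda_j^-(\theta)-(\theta-1)=\frac{1}{2\lambda n}\left(-\delta_j(\theta)-\theta\sigma_j\sqrt{\delta_j(\theta)}\right)\leq 0,
$$
giving $\lambda_j^-(\theta)\leq\theta-1$.

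For the orderings I would argue via monotonicity in $s=\theta\sigma_j$. Writing $a=4(1-\theta)\lambda n\geq 0$, each eigenvalue has the form $\frac{1}{2\lambda n}\left(\frac{a}{2}-s^2\pm s\sqrt{s^2-a}\right)$. The map $s\mapsto -s^2+s\sqrt{s^2-a}$ is increasing on $[\sqrt{a},\infty)$ (as already used in Lemma \ref{eig_real}), while $s\mapsto -s^2-s\sqrt{s^2-a}$ is decreasing there; combined with $\sigma_1\geq\cdots\geq\sigma_p$ (so that $s$ decreases as $j$ increases), these give $\lambda_p^+(\theta)\leq\cdots\leq\lambda_1^+(\theta)$ and $\lambda_1^-(\theta)\leq\cdots\leq\lambda_p^-(\theta)$, completing the displayed chain of inequalities. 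Finally, since every $\lambda_j^\pm(\theta)$ lies in $[\lambda_1^-(\theta),0]$ and the remaining eigenvalue $\lambda_0(\theta)=1-\theta$ satisfies $0\leq 1-\theta\leq-\lambda_1^-(\theta)$ (using $\lambda_1^-(\theta)\leq\theta-1$), the eigenvalue of largest modulus is $\lambda_1^-(\theta)$, so the spectral radius equals $-\lambda_1^-(\theta)$.

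I do not anticipate a serious obstacle, as the content is a direct transcription of Lemma \ref{eig_real} with $l=p$. The only points requiring care are (i) correctly deducing $\delta_j(\theta)\geq 0$ for all $j$ from $\theta\geq\bar\theta_p$ together with the decreasing ordering of the $\delta_j$, and (ii) being explicit that the sign of $1-\theta$ (hence the restriction $\theta\leq 1$) is what makes both $\theta\sigma_j\geq\sqrt{\delta_j(\theta)}$ and $\lambda_j^+(\theta)\leq 0$ hold.
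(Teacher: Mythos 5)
Your proposal is correct and takes exactly the paper's approach: the paper's own proof of this lemma consists solely of the remark ``The same presented for Lemma~\ref{eig_real}'', and your argument is precisely that proof carried out with $l=p$, using $\theta\geq\bar\theta_p$ together with Lemma~\ref{lm_deltas} to get $\delta_j(\theta)\geq 0$ for all $j$. The extra details you supply (the monotonicity argument for the ordering of the $\lambda_j^-$, the explicit reliance on $\theta\leq 1$, and the final comparison with $\lambda_0(\theta)=1-\theta$) are exactly the steps the paper leaves implicit, and they are all sound.
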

\begin{proof}
The same presented for Lemma \ref{eig_real}. 
\end{proof}

\medskip

Using the previous results, we can finally establish the convergence of 
Algorithm \ref{alg_qtz} (Deterministic Quartz). 

\begin{theorem}
\label{conv_qtz}
Let $w^0\in\mathbb{R}^d$ and $\alpha^0\in\mathbb{R}^N$ be arbitrary and consider the sequence 
$(w^k,\alpha^k)_{k\in\mathbb{N}}$ generated by Algorithm \ref{alg_qtz} with 
$\theta\in\left(0,\dfrac{2\sqrt{\lambda n}}{\sqrt{\lambda n}+\sigma_1}\right)$. 
Then the sequence $(w^k,\alpha^k)$ converges to the (unique) solution of the 
problem \eqref{pd_prob} at an asymptotic linear rate of 
$$
\rho_3(\theta)\stackrel{\rm def}{=}\left\{\begin{array}{l}
1-\theta , \, \mbox{if } \theta\in(0,\bar\theta_1] \vspace{.2cm} \\
\dfrac{1}{2\lambda n}\left(\theta\sigma_1\sqrt{\delta_1(\theta)}+
\theta^2\sigma_1^2-2(1-\theta)\lambda n\right),\, \mbox{if } \theta\geq\bar\theta_1,
\end{array}\right.
$$
where 
$
\bar\theta_1=
\dfrac{-2\lambda n+2\displaystyle\sqrt{\lambda n(\lambda n+\sigma_1^2)}}{\sigma_1^2}.
$
Furthermore, if we choose $\theta_3^*\stackrel{\rm def}{=}\bar\theta_1$, 
then the (theoretical) convergence rate is optimal and it is equal to 
$
\rho_3^*\stackrel{\rm def}{=}1-\theta_3^*.
$ 
\end{theorem}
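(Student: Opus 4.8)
The plan is to reduce the convergence question entirely to a computation of the spectral radius of the iteration matrix $G_3(\theta)$, exactly as was done for Algorithm \ref{alg_mfp2} in Theorem \ref{th_mfp2}. Since \eqref{qtzfixp} shows the iterates satisfy $x^{k+1}=G_3(\theta)x^k+f$ with fixed point $x^*$, the error obeys $\|x^k-x^*\|\le\|G_3(\theta)^k\|\,\|x^0-x^*\|$, and the identity $\|G_3(\theta)^k\|^{1/k}\to\rho_3(\theta)$ (used already in the proof of Theorem \ref{th_mfp2}) will deliver the asymptotic linear rate once $\rho_3(\theta)<1$ is established. So the first step is to identify the spectral radius as a function of $\theta$, piecing together the three regimes already analyzed: Lemma \ref{eig_circle} gives spectral radius $1-\theta$ on $(0,\bar\theta_1]$, while Lemmas \ref{eig_real} and \ref{eig_real2} identify it as $-\lambda_1^-(\theta)$ for $\theta\ge\bar\theta_1$. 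Substituting the definition \eqref{eig_G-} of $\lambda_1^-(\theta)$ then reproduces precisely the two branches of $\rho_3(\theta)$ in the statement.

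The second step, which I expect to be the technical heart of the proof, is to determine the admissible range, i.e. the values of $\theta$ for which $\rho_3(\theta)<1$. On $(0,\bar\theta_1]$ this is immediate since $1-\theta<1$. For $\theta\ge\bar\theta_1$ I must solve $-\lambda_1^-(\theta)<1$, equivalently $\theta\sigma_1\sqrt{\delta_1(\theta)}<2\lambda n(2-\theta)-\theta^2\sigma_1^2$. The main obstacle is the sign bookkeeping: one checks that the right-hand side is positive on the relevant range and then squares, using $\delta_1(\theta)=\theta^2\sigma_1^2-4(1-\theta)\lambda n$. After squaring, the quartic terms $\theta^4\sigma_1^4$ cancel and the inequality collapses, after dividing by $4\lambda n$, to the clean condition $\theta^2\sigma_1^2<\lambda n(2-\theta)^2$. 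Taking square roots of both (positive) sides gives $\theta\sigma_1<\sqrt{\lambda n}\,(2-\theta)$, i.e. $\theta(\sigma_1+\sqrt{\lambda n})<2\sqrt{\lambda n}$, which is exactly $\theta<\dfrac{2\sqrt{\lambda n}}{\sqrt{\lambda n}+\sigma_1}$. To glue the two regimes I would also record that, after rationalizing, $\bar\theta_1=\dfrac{2\sqrt{\lambda n}}{\sqrt{\lambda n}+\sqrt{\lambda n+\sigma_1^2}}<\dfrac{2\sqrt{\lambda n}}{\sqrt{\lambda n}+\sigma_1}$ (since $\sqrt{\lambda n+\sigma_1^2}>\sigma_1$), so the threshold lies inside the second regime and the full admissible interval is $\left(0,\dfrac{2\sqrt{\lambda n}}{\sqrt{\lambda n}+\sigma_1}\right)$.

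The third step is to locate the optimal parameter by studying the monotonicity of $\rho_3$. On $(0,\bar\theta_1]$ the rate equals $1-\theta$ and is strictly decreasing. For $\theta\ge\bar\theta_1$ I would argue that $-\lambda_1^-(\theta)$ is strictly increasing: written as $\dfrac{1}{2\lambda n}\left(\theta^2\sigma_1^2+\theta\sigma_1\sqrt{\delta_1(\theta)}-2(1-\theta)\lambda n\right)$, each of the three summands is nondecreasing on this range, since $\theta^2\sigma_1^2$ and $2(\theta-1)\lambda n$ plainly increase and, by Lemma \ref{lm_deltas}, $\delta_1(\theta)$ increases from zero so that $\theta\sigma_1\sqrt{\delta_1(\theta)}$ increases as well. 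The two branches agree at $\theta=\bar\theta_1$ (both equal $1-\bar\theta_1$, because $\delta_1(\bar\theta_1)=0$ makes the second-branch formula coincide with Lemma \ref{eig_circle}), so $\rho_3$ is continuous with a unique minimum at $\theta_3^*=\bar\theta_1$, of value $\rho_3^*=1-\theta_3^*$.

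Finally, combining the bound $\rho_3(\theta)<1$ with the spectral radius identity recalled in the first paragraph completes the argument: given any $\gamma>0$ there is $k_0$ with $\|x^k-x^*\|\le(\rho_3(\theta)+\gamma)^k\|x^0-x^*\|$ for all $k\ge k_0$, which is precisely the claimed asymptotic linear rate, and the limit point is $x^*$ since $x^*=G_3(\theta)x^*+f$. Apart from the squaring step of the second paragraph, every part of the proof is either a direct appeal to the preceding lemmas or an elementary monotonicity check.
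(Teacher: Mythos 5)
Your proposal is correct and follows essentially the same route as the paper: reduce everything to the spectral radius of $G_3(\theta)$ via Lemmas \ref{eig_circle}, \ref{eig_real} and \ref{eig_real2}, handle the range of $\theta$ by the squaring manipulation with $\delta_1(\theta)$, and get optimality of $\theta_3^*=\bar\theta_1$ from monotonicity of the two branches of $\rho_3$. The only (immaterial) difference is that you solve the strict inequality $\rho_3(\theta)<1$ directly, whereas the paper characterizes the boundary case $\rho_3(\theta)=1$ and combines it with the same monotonicity argument; your version also makes explicit two details the paper leaves implicit, namely the continuity of $\rho_3$ at $\bar\theta_1$ and the fact that $\bar\theta_1<\frac{2\sqrt{\lambda n}}{\sqrt{\lambda n}+\sigma_1}$.
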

\begin{proof}
Since Algorithm~\ref{alg_qtz} can be represented by \eqref{qtzfixp}, we need to show 
that $\rho(G_3(\theta))$, the spectral radius of $G_3(\theta)$, is less than $1$. 
First, note that by Lemmas \ref{eig_circle}, \ref{eig_real} and \ref{eig_real2}, we have 
$\rho(G_3(\theta))=\rho_3(\theta)$. 
Using Lemma \ref{lm_deltas} we conclude that the function 
$\theta\mapsto\rho_3(\theta)$ is increasing on the interval $[\bar\theta_1,\infty)$, which 
means that its minimum is attained at $\bar\theta_1$.
To finish the proof, it is enough to prove that $\rho_3(\theta)=1$ if and only if 
$$
\theta=\dfrac{2\sqrt{\lambda n}}{\sqrt{\lambda n}+\sigma_1}.
$$
Note that 
$$
\begin{array}{rcl}
\rho_3(\theta)=1 & \Leftrightarrow & 
\theta\sigma_1\sqrt{\delta_1(\theta)}+\theta^2\sigma_1^2-2(1-\theta)\lambda n=2\lambda n
\vspace{.2cm} \\
& \Rightarrow & 
\theta^2\sigma_1^2\delta_1(\theta)=\Big(2(2-\theta)\lambda n-\theta^2\sigma_1^2\Big)^2
\vspace{.2cm} \\
& \Leftrightarrow & 
\dfrac{2-\theta}{\theta}=\dfrac{\sigma_1}{\sqrt{\lambda n}}
\vspace{.2cm} \\
& \Leftrightarrow & 
\theta=\dfrac{2\sqrt{\lambda n}}{\sqrt{\lambda n}+\sigma_1}
\end{array}
$$
and 
$$
\begin{array}{rcl}
\theta=\dfrac{2\sqrt{\lambda n}}{\sqrt{\lambda n}+\sigma_1} & \Leftrightarrow & 
\theta^2\sigma_1^2=\lambda n(2-\theta)^2
\vspace{.2cm} \\
& \Leftrightarrow & 
\theta\sigma_1\sqrt{\delta_1(\theta)}+\theta^2\sigma_1^2-2(1-\theta)\lambda n=2\lambda n,
\end{array}
$$
completing the proof. 
\end{proof}

\medskip

It is worth noting that if the spectral radius of $M_1$ is less than $1$, that is, if 
$\sigma_1^2<\lambda n$, then Algorithms \ref{alg_mfp1}, \ref{alg_mfp2} and 
\ref{alg_qtz} converge for any choice of $\theta\in(0,1]$. Indeed, 
in this case we have
$$
\dfrac{2\lambda n}{\lambda n+\sigma_1^2}>
\dfrac{2\sqrt{\lambda n}}{\sqrt{\lambda n}+\sigma_1}>1,
$$
which implies that the set of admissible values for $\theta$ established in 
Theorems \ref{th_mfp1}, \ref{th_mfp2} and \ref{conv_qtz} contains the whole 
interval $(0,1]$. 

On the other hand, if $\sigma_1^2\geq\lambda n$, the convergence of these algorithms 
is more restrictive. Moreover, in this case we have 
$$
\dfrac{2\lambda n}{\lambda n+\sigma_1^2}\leq
\dfrac{2\sqrt{\lambda n}}{\sqrt{\lambda n}+\sigma_1}\leq 1,
$$
which means that Algorithm \ref{alg_qtz} has a broader range for $\theta$ than 
Algorithms \ref{alg_mfp1} and \ref{alg_mfp2}.

\subsection{Complexity results}
\label{sec_cpxqtz}
Taking into account \eqref{cpx0}, \eqref{cond_f}, the relation $\log(1-\theta)\approx-\theta$ 
and Theorem \ref{conv_qtz}, we 
conclude that the complexity of our Accelerated Fixed Point Method, 
Algorithm \ref{alg_qtz}, is proportional to 
\begin{equation}
\label{cpxqtz1}
\dfrac{1}{2\theta_3^*}=
\dfrac{\sigma_1^2}{-4\lambda n+4\displaystyle\sqrt{\lambda n(\lambda n+\sigma_1^2)}} \overset{\eqref{cond_f}}{=}
\dfrac{\kappa - \lambda n }{4(\displaystyle\sqrt{\lambda n\kappa}-\lambda n)}.
\end{equation}

Note that in the case when $\lambda=1/n$, as is typical in machine learning applications, we can write
\begin{equation}
\label{cpxqtz}
\dfrac{1}{2\theta_3^*}\overset{\eqref{cpxqtz1}}{=}\frac{\kappa -1}{4(\sqrt{\kappa}-1)}=\frac{\sqrt{\kappa}+1}{4}.
\end{equation}

{\bf This is very surprising as it means that we are achieving the optimal accelerated Nesterov 
rate $\tilde{O}(\sqrt{\kappa})$.}

\section{Extensions}
\label{sec_qtze}
In this section we discuss some variants of Algorithm \ref{alg_qtz}. The first one 
 consists of switching the order of the computations, 
updating the dual variable first and then the primal one. 

The second approach updates the primal 
variable enforcing the first relation of the optimality conditions given by \eqref{optM2w} and 
using the relaxation parameter $\theta$ only to update the dual variable. 

\subsection{Switching the update order}
This approach updates the dual variable $\alpha$ first and then updates the primal 
variable $w$ using the new information about $\alpha$. This is summarized in the following scheme. 
\begin{equation}
\label{qtznew}
\left\{\begin{array}{l}
\alpha^{k+1}=(1-\theta)\alpha^k+\theta(y-A^Tw^k) \vspace{.15cm} \\ 
w^{k+1}=(1-\theta)w^k+\theta \dfrac{1}{\lambda n}A\alpha^{k+1}.
\end{array}
\right.
\end{equation}

As we shall see now, this scheme provides the same complexity results as Algorithm \ref{alg_qtz}. 
To see this, note that the iteration \eqref{qtznew} is equivalent to
$$
\left(\begin{array}{cc} I & -\frac{\theta}{\lambda n}A \\ 0 & I \end{array}\right)
\left(\begin{array}{c} w^{k+1} \\ \alpha^{k+1}\end{array}\right)=
\left(\begin{array}{cc} (1-\theta)I & 0  \vspace{.15cm}\\ 
-\theta A^T & (1-\theta)I \end{array}\right)
\left(\begin{array}{c} w^k \\ \alpha^k\end{array}\right)+
\left(\begin{array}{c} 0 \\ \theta y \end{array}\right)
$$
or in a compact way, 
$$
x^{k+1}=G(\theta)x^k+f
$$
with 
$$
G(\theta)=(1-\theta)I+\theta\left(
\begin{array}{cc} -\frac{\theta}{\lambda n}AA^T & \frac{1-\theta}{\lambda n}A \vspace{.15cm}\\ 
-A^T & 0 \end{array}\right).
$$

It can be shown that the matrix $G(\theta)$ has exactly the same spectrum of $G_3(\theta)$, 
defined in \eqref{Gqtz}. So, the convergence result is also the same, which we state again 
for convenience.

\begin{theorem}
\label{conv_qtznew}
Let $w^0\in\mathbb{R}^d$ and $\alpha^0\in\mathbb{R}^N$ be arbitrary and consider the sequence 
$(w^k,\alpha^k)_{k\in\mathbb{N}}$ defined by \eqref{qtznew} with 
$\theta\in\left(0,\dfrac{2\sqrt{\lambda n}}{\sqrt{\lambda n}+\sigma_1}\right)$. 
Then the sequence $(w^k,\alpha^k)$ converges to the (unique) solution of the 
problem \eqref{pd_prob} at an asymptotic linear rate of 
$$
\rho_3(\theta)=\left\{\begin{array}{l}
1-\theta , \, \mbox{if } \theta\in(0,\bar\theta_1] \vspace{.2cm} \\
\dfrac{1}{2\lambda n}\left(\theta\sigma_1\sqrt{\delta_1(\theta)}+
\theta^2\sigma_1^2-2(1-\theta)\lambda n\right),\, \mbox{if } \theta\geq\bar\theta_1,
\end{array}\right.
$$
where 
$
\bar\theta_1=
\dfrac{-2\lambda n+2\displaystyle\sqrt{\lambda n(\lambda n+\sigma_1^2)}}{\sigma_1^2}.
$
Furthermore, if we choose $\theta_3^*=\bar\theta_1$, 
then the (theoretical) convergence rate is optimal and it is equal to 
$
\rho_3^*=1-\theta_3^*.
$ 
\end{theorem}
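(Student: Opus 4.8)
The plan is to reduce the entire statement to a single spectral identity: that the iteration matrix $G(\theta)$ governing the scheme \eqref{qtznew} has exactly the same eigenvalues as the matrix $G_3(\theta)$ defined in \eqref{Gqtz}. Once this is established, Theorem \ref{conv_qtznew} follows \emph{verbatim} from the spectral analysis already carried out for Algorithm \ref{alg_qtz}: Lemmas \ref{eig_circle}, \ref{eig_real} and \ref{eig_real2} compute the spectral radius purely in terms of the eigenvalues $\lambda_0,\lambda_j^{\pm}$, and Theorem \ref{conv_qtz} derives from them the rate $\rho_3(\theta)$, the admissible range, the optimal $\theta_3^*=\bar\theta_1$, and the optimal rate $\rho_3^*=1-\theta_3^*$. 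Since $G(\theta)=(1-\theta)I+\theta M(\theta)$ with
$$
M(\theta)=\left(\begin{array}{cc} -\frac{\theta}{\lambda n}AA^T & \frac{1-\theta}{\lambda n}A \vspace{.15cm}\\ -A^T & 0 \end{array}\right),
$$
and $G_3(\theta)=(1-\theta)I+\theta M_3(\theta)$, it suffices to show that $M(\theta)$ and $M_3(\theta)$ share a characteristic polynomial.

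First I would diagonalize the blocks using the singular value decomposition \eqref{svdA}, exactly as in the proof of Lemma \ref{lm_eigGqtz}. Conjugating by the orthogonal matrix $W=\mathrm{diag}(U,V)$ and using $U^TAA^TU=\Sigma\Sigma^T$, $U^TAV=\Sigma$, $V^TA^TU=\Sigma^T$, the matrix $M(\theta)$ is orthogonally similar to
$$
\left(\begin{array}{cc} -\frac{\theta}{\lambda n}\Sigma\Sigma^T & \frac{1-\theta}{\lambda n}\Sigma \vspace{.15cm}\\ -\Sigma^T & 0 \end{array}\right).
$$
Partitioning the coordinates according to the $p,\,d-p,\,p,\,N-p$ block structure of $\Sigma$ in \eqref{Sigma_A} and permuting rows and columns so that the index $j$ of each nonzero singular value is grouped with its dual counterpart, this matrix decomposes into $q=N+d-2p$ trivial (zero) coordinates together with $p$ independent $2\times 2$ blocks, the $j$-th being $\left(\begin{array}{cc}-\frac{\theta}{\lambda n}\sigma_j^2 & \frac{1-\theta}{\lambda n}\sigma_j\\ -\sigma_j & 0\end{array}\right)$.

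Next I would compute the characteristic polynomial of each $2\times 2$ block (using Proposition \ref{pr_det} for the bookkeeping of the trivial part), obtaining
$$
t\Big(t+\tfrac{\theta}{\lambda n}\sigma_j^2\Big)+\tfrac{1-\theta}{\lambda n}\sigma_j^2 = t^2+\dfrac{\theta\sigma_j^2}{\lambda n}\,t+\dfrac{(1-\theta)\sigma_j^2}{\lambda n},
$$
so that $M(\theta)$ has characteristic polynomial $t^{q}\prod_{j=1}^{p}\big(t^2+\frac{\theta\sigma_j^2}{\lambda n}t+\frac{(1-\theta)\sigma_j^2}{\lambda n}\big)$. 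This is identical to the polynomial $p_\theta(t)$ found for $M_3(\theta)$ in Lemma \ref{lm_eigGqtz}; hence $M(\theta)$ and $M_3(\theta)$, and therefore $G(\theta)$ and $G_3(\theta)$, have the same spectrum, given by $\lambda_0(\theta)$ and $\lambda_j^{\pm}(\theta)$ of \eqref{eig_G0}--\eqref{eig_G+}. Invoking Lemmas \ref{eig_circle}, \ref{eig_real}, \ref{eig_real2} and Theorem \ref{conv_qtz} then yields the claimed rate and optimal parameter, completing the proof.

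The one genuinely instructive point — the ``main obstacle'' in spirit, though not a deep one — is that $M(\theta)$ and $M_3(\theta)$ look structurally \emph{different}: the former places the quadratic term $-\frac{\theta}{\lambda n}AA^T$ in the primal block while the latter places $-\frac{\theta}{\lambda n}A^TA$ in the dual block. The SVD reduction is precisely what exposes that, componentwise in the singular values, the two produce the same $2\times2$ characteristic factor (the off-diagonal product $\frac{1-\theta}{\lambda n}\sigma_j^2$ and the diagonal trace $-\frac{\theta}{\lambda n}\sigma_j^2$ coincide), so the difference is invisible to the spectrum. Care is only needed in the row/column permutation and in matching the multiplicity $q=N+d-2p$ of the zero eigenvalue, both of which are routine.
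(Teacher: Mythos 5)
Your proposal is correct and follows exactly the route the paper takes: the paper's proof of Theorem \ref{conv_qtznew} consists of asserting that $G(\theta)$ has the same spectrum as $G_3(\theta)$ and then reusing Theorem \ref{conv_qtz}, and your SVD-and-$2\times 2$-block computation (the same technique as in Lemma \ref{lm_eigGqtz}) is precisely the omitted verification of that assertion, yielding the common characteristic polynomial $t^{q}\prod_{j=1}^{p}\bigl(t^2+\frac{\theta\sigma_j^2}{\lambda n}t+\frac{(1-\theta)\sigma_j^2}{\lambda n}\bigr)$. In fact your write-up is more complete than the paper's own one-line argument.
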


\subsection{Maintaining primal-dual relationship}
\label{secqtzmod}
The second approach updates the primal variable enforcing the first relation of the 
optimality conditions given by \eqref{optM2w} and uses the relaxation parameter 
$\theta$ only to update the dual variable, as described in the following scheme. 
\begin{equation}
\label{qtzmod}
\left\{\begin{array}{l}
w^{k+1}=\dfrac{1}{\lambda n}A\alpha^k  \vspace{.15cm}\\ 
\alpha^{k+1}=(1-\theta)\alpha^k+\theta(y-A^Tw^{k+1}).
\end{array}
\right.
\end{equation}
Differently from the previous case, this scheme cannot achieve accelerated convergence. 
Indeed, note first that the scheme \eqref{qtzmod} can be written as 
$$
\left(\begin{array}{cc} I & 0 \\ \theta A^T & I \end{array}\right)
\left(\begin{array}{c} w^{k+1} \\ \alpha^{k+1}\end{array}\right)=
\left(\begin{array}{cc} 0 & \frac{1}{\lambda n}A  \vspace{.15cm}\\ 
0 & (1-\theta)I \end{array}\right)
\left(\begin{array}{c} w^k \\ \alpha^k\end{array}\right)+
\left(\begin{array}{c} 0 \\ \theta y \end{array}\right)
$$
or in a compact way, 
$$
x^{k+1}=G(\theta)x^k+f
$$
with 
$$
G(\theta)=
\left(\begin{array}{cc} 0 & \frac{1}{\lambda n}A \vspace{.15cm}\\ 
0 & (1-\theta)I-\frac{\theta}{\lambda n}A^TA \end{array}\right).
$$
We can conclude that the eigenvalues of this matrix are  
$$
\left\{1-\theta-\dfrac{\theta\sigma_j^2}{\lambda n}\;,\; j=1,\ldots,p
\right\}\cup\{1-\theta\},
$$
exactly the same of the matrix $G_1(\theta)$, the iteration matrix of Algorithm \ref{alg_mfp} 
with employment of $M_1$. So, the complexity analysis here is the same as that one established in 
Theorem \ref{th_mfp1}. 

\subsection{Maintaining primal-dual relationship 2}
For the sake of completeness, we present next the method where we keep the second relationship 
intact and include $\theta$ in the first relationship. This leads to
\begin{equation}
\label{qtzmod2}
\left\{\begin{array}{l}
\alpha^{k+1}=y-A^Tw^{k}
 \vspace{.15cm}\\ 
w^{k+1}=(1-\theta)w^k+\dfrac{\theta}{\lambda n}A\alpha^{k+1}.
\end{array}
\right.
\end{equation}
Here we obtain the same convergence results as the ones described in Section \ref{secqtzmod}.
In fact, the relations above can be written as 
$$
\left(\begin{array}{cc} 0 & I \\ I & -\frac{\theta}{\lambda n}A \end{array}\right)
\left(\begin{array}{c} w^{k+1} \\ \alpha^{k+1}\end{array}\right)=
\left(\begin{array}{cc} -A^T & 0  \vspace{.15cm}\\ 
(1-\theta)I & 0 \end{array}\right)
\left(\begin{array}{c} w^k \\ \alpha^k\end{array}\right)+
\left(\begin{array}{c} y \\ 0 \end{array}\right)
$$
or in a compact way, 
$
x^{k+1}=G(\theta)x^k+f
$
with 
$$
G(\theta)=
\left(\begin{array}{cc} (1-\theta)I-\frac{\theta}{\lambda n}AA^T & 0 \vspace{.15cm}\\ 
-A^T & 0 \end{array}\right).
$$
We can conclude that the eigenvalues of this matrix are  
$$
\left\{1-\theta-\dfrac{\theta\sigma_j^2}{\lambda n}\;,\; j=1,\ldots,p
\right\}\cup\{1-\theta\},
$$
exactly the same of the matrix $G_1(\theta)$, the iteration matrix of Algorithm \ref{alg_mfp} 
with employment of $M_1$. So, the complexity analysis here is the same as that one established in 
Theorem \ref{th_mfp1}. 

Observe that in \eqref{qtzmod} we have 
$$
w^{k+1}=\phi_1(\alpha^k)\quad\mbox{and}\quad\alpha^{k+1}=\phi_2(\theta,\alpha^k,w^{k+1}).
$$
On the other hand, in \eqref{qtzmod2} we have 
$$
\alpha^{k+1}=\phi_3(w^k)\quad\mbox{and}\quad w^{k+1}=\phi_4(\theta,w^k,\alpha^{k+1}).
$$ 
It is worth noting that if we update the variables as 
$$
\alpha^{k+1}=\phi_2(\theta,\alpha^k,w^k)\quad\mbox{and}\quad w^{k+1}=\phi_1(\alpha^{k+1})
$$
or
$$
w^{k+1}=\phi_4(\theta,w^k,\alpha^{k})\quad\mbox{and}\quad\alpha^{k+1}=\phi_3(w^{k+1})
$$ 
we obtain 
$$
\left(\begin{array}{cc} -\frac{\theta}{\lambda n}AA^T & \frac{(1-\theta)}{\lambda n}A \vspace{.15cm}\\ 
-\theta A^T & (1-\theta)I \end{array}\right)
\quad\mbox{and}\quad
\left(\begin{array}{cc} (1-\theta)I & \frac{\theta}{\lambda n}A \vspace{.15cm}\\ 
- (1-\theta)A^T & -\frac{\theta}{\lambda n}A^TA \end{array}\right)
$$
as the associated iteration matrices, respectively. Moreover, we can conclude that they also 
have the same spectrum of $G_1(\theta)$. So, the complexity analysis is the same as that one established 
in Theorem \ref{th_mfp1}.

\section{Numerical Experiments}
\label{sec_num} 
In this section we present a comparison among the methods discussed in this work. Besides 
a table with the convergence rates and complexity bounds, we show here some numerical tests 
performed to illustrate the properties of Algorithms \ref{alg_mfp1}, \ref{alg_mfp1} and \ref{alg_qtz} as well 
as of the extensions \eqref{qtznew} and \eqref{qtzmod} applied to solve the primal-dual ridge 
regression problem stated in \eqref{pd_prob}. We refer to Algorithm~\ref{alg_qtz} as Quartz and 
the extensions \eqref{qtznew} and \eqref{qtzmod} as New Quartz and Modified Quartz, respectively.
The name Quartz is due to the fact that Algorithm \ref{alg_qtz} is a deterministic version of a 
randomized primal-dual algorithm proposed and analyzed by Qu, Richt\'arik and Zhang \cite{Quartz}.  

We summarize the main features of these methods in Table \ref{table2} which brings the range of 
the parameter to ensure convergence, the optimal convergence rates, the complexity and the cost 
per iteration of each method.  For instance, the two versions of Algorithm \ref{alg_mfp} have the 
same range for theta. The usage of $M_1$ provides best convergence rate compared with using $M_2$. 
However, it requires more calculations per iteration: the major computational tasks to be 
performed are computation of the matrix-vector products $AA^Tw$ and $A^TA\alpha$, while the 
use of $M_2$ needs the computation of $A\alpha$ and $A^Tw$. 

Surprisingly, Algorithm \ref{alg_qtz} has shown to be the best from both the theoretical point 
of view and the numerical experiments and with the same cost as the computation 
of $A\alpha$ and $A^Tw$. 

We also point out that the modified Quartz, \eqref{qtzmod}, did not have here the same 
performance as the randomized version studied in \cite{Quartz}. 

\begin{table}[htbp]
\begin{center}
\renewcommand{\arraystretch}{2.3}
{\small
\begin{tabular}{|c|c|c|c|c|}
\hline
& Range of $\theta$ & Optimal rate & Complexity & Cost/iteration \\
\cline{1-5}
PDFP1$(\theta)$ & $\left(0,\dfrac{2\lambda n}{\lambda n+\sigma_1^2}\right)$  & 
$\dfrac{\sigma_1^2}{2\lambda n+\sigma_1^2}$ & \eqref{cpx1} & $10dN+5d+9N$\\
\cline{1-5}
PDFP2$(\theta)$ & $\left(0,\dfrac{2\lambda n}{\lambda n+\sigma_1^2}\right)$ & 
$\sqrt{\dfrac{\sigma_1^2}{\lambda n+\sigma_1^2}}$ & \eqref{cpx2}  & $6dN+5d+9N$ \\
\cline{1-5}
QTZ$(\theta)$ &  $\left(0,\dfrac{2\sqrt{\lambda n}}{\sqrt{\lambda n}+\sigma_1}\right)$ & 
$1-\theta_3^*$ & \eqref{cpxqtz} & $6dN+5d+9N$ \\
\cline{1-5}
NQTZ$(\theta)$ &  $\left(0,\dfrac{2\sqrt{\lambda n}}{\sqrt{\lambda n}+\sigma_1}\right)$ & 
$1-\theta_3^*$ & \eqref{cpxqtz} & $6dN+5d+9N$ \\
\cline{1-5}
MQTZ$(\theta)$ & $\left(0,\dfrac{2\lambda n}{\lambda n+\sigma_1^2}\right)$ & 
$\dfrac{\sigma_1^2}{2\lambda n+\sigma_1^2}$ &\eqref{cpx1} & $6dN+3d+9N$ \\
\hline
\end{tabular}
}
\end{center}
\caption{\redcolor{Comparison between the ranges of $\theta$ to ensure convergence, optimal 
convergence rates, complexity and cost per iteration (\# of arithmetic operations) of the 
algorithms proposed in this paper: Algorithm~\ref{alg_mfp1}, indicated by PDFP1($\theta$); 
Algorithm \ref{alg_mfp2}, denoted by PDFP2($\theta$); Algorithm~\ref{alg_qtz}, 
QTZ($\theta$) and the extensions \eqref{qtznew} (New Quartz) and \eqref{qtzmod} 
(Modified Quartz), indicated by NQTZ($\theta$) and MQTZ($\theta$), respectively.}}
\label{table2}
\end{table}

Figure \ref{fig_fxpqtz} illustrates these features, showing the 
primal-dual objective values against the number of iterations. The dimensions considered were 
$d=10$, $m=1$ and $n=500$. \redcolor{We adopted the optimal parameters associated with each method, namely, 
$\theta_1^*$, $\theta_2^*$ and $\theta_3^*$ for Algorithms~\ref{alg_mfp1}, \ref{alg_mfp2} and 
\ref{alg_qtz}, respectively, $\theta_3^*$ for the algorithm given by \eqref{qtznew} and 
$\theta_1^*$ for the algorithm given by \eqref{qtzmod}. These parameters are defined in 
Theorems \ref{th_mfp1}, \ref{th_mfp2} and \ref{conv_qtz} and the computational cost for computing them is 
the same as the cost for computing $\sigma_1$, the largest singular value of $A$.}

The left picture of Figure \ref{fig_fxpqtz} compares Algorithms~\ref{alg_mfp1}, \ref{alg_mfp2} and 
\ref{alg_qtz}, while the right one shows the performance of Algorithm~\ref{alg_mfp1} and the 
three variants of Quartz. We can see the equivalence between Quartz and New 
Quartz and also the equivalence between Modified Quartz and Algorithm~\ref{alg_mfp1}. 
\redcolor{Note that besides the advantage of QTZ* in terms of number of iterations, it does not need 
more arithmetic operations per iteration as we have seen in Table \ref{table2}.}
\begin{figure}[htbp]
\centering
\includegraphics[scale=0.315]{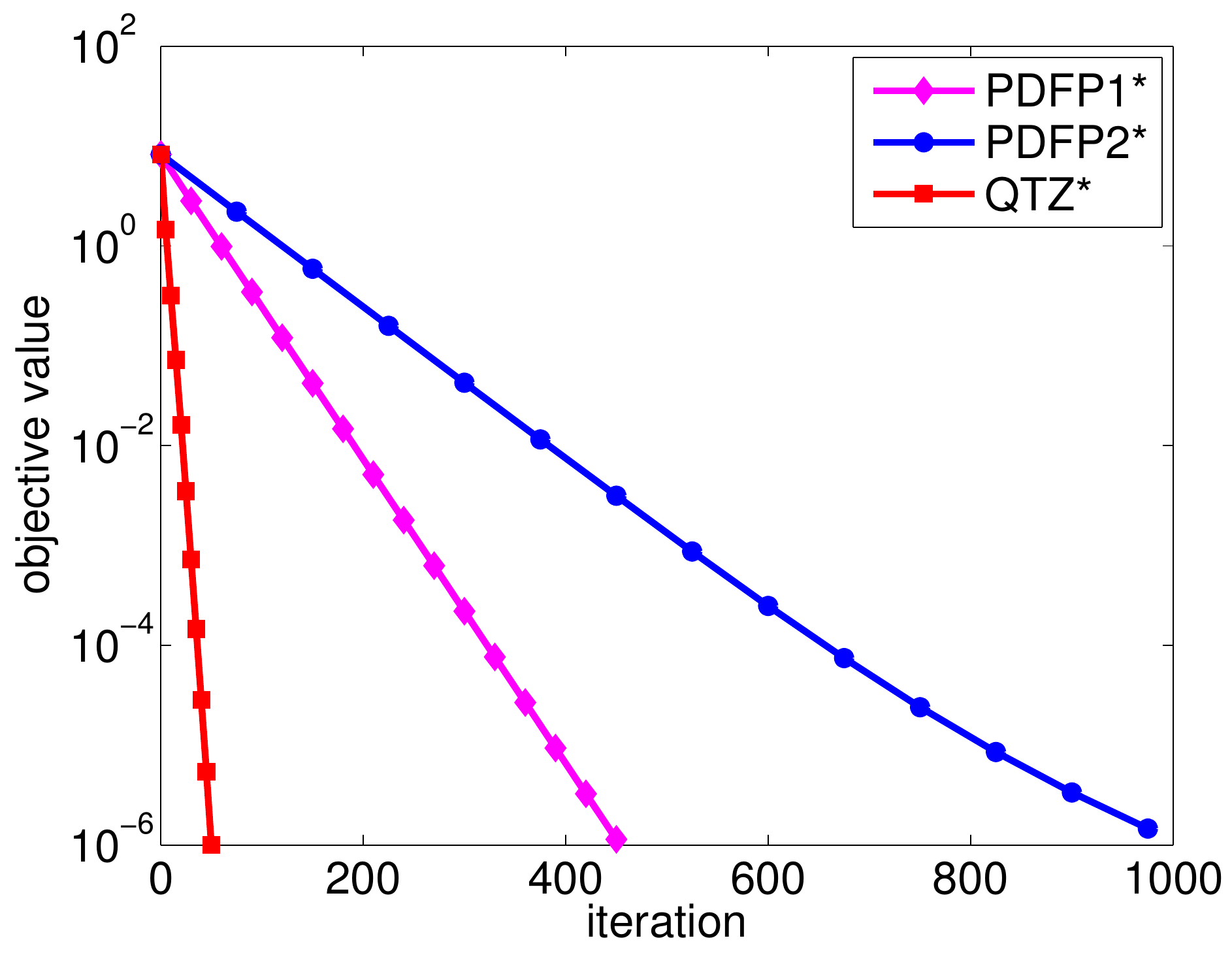}
\includegraphics[scale=0.315]{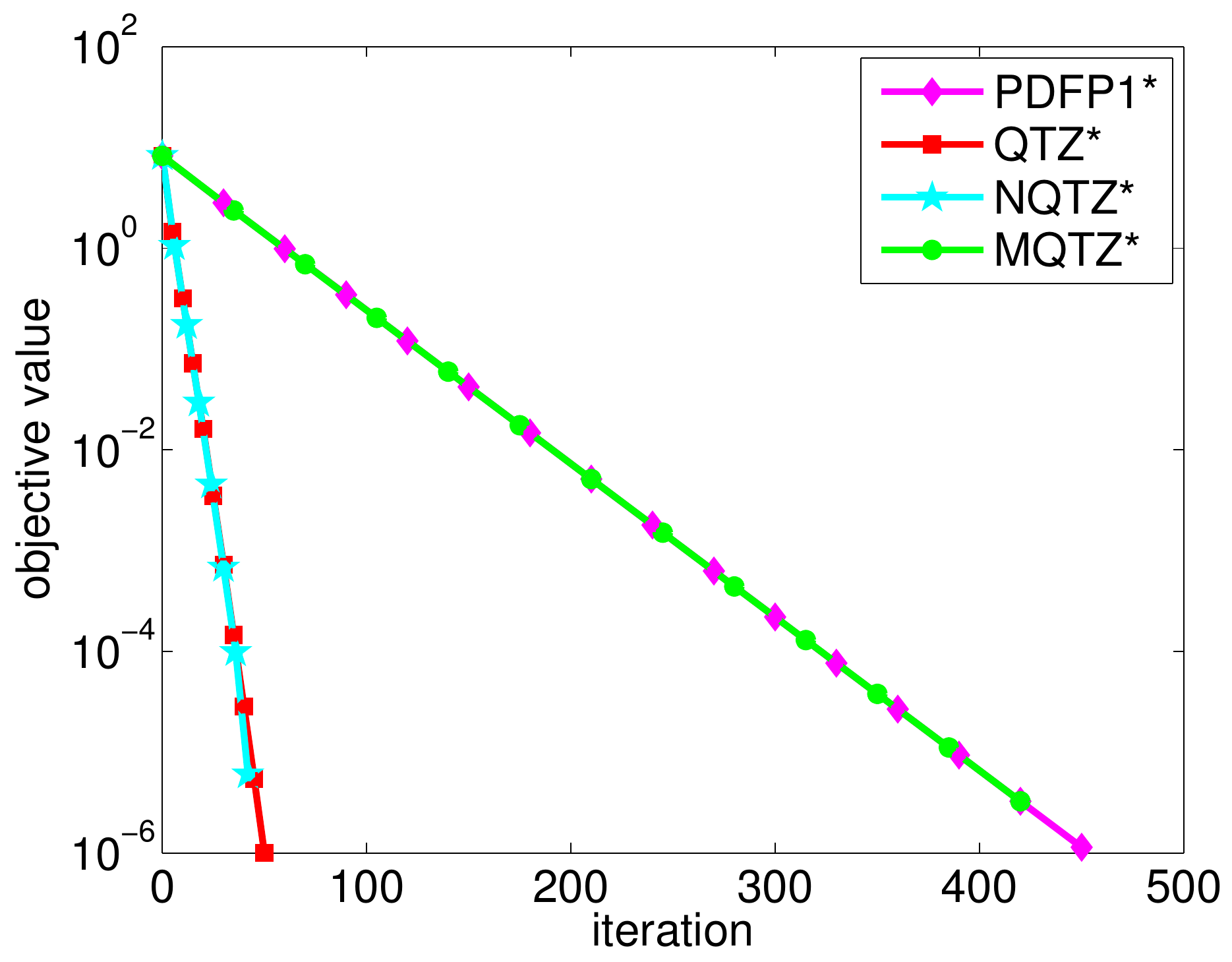}
\caption{\redcolor{Performance of the optimal versions of the algorithms
proposed in this paper applied to solve the problem \eqref{pd_prob}. 
The pictures show the objective values against the number of iterations. The dimensions considered were 
$d=10$, $m=1$ and $n=500$. The matrix $A\in\mathbb{R}^{d\times N}$ and the vector $y\in\mathbb{R}^{N}$ 
were randomly generated. For simplicity of notation we have denoted Algorithm~\ref{alg_mfp1} by 
PDFP1*, Algorithm \ref{alg_mfp2} by PDFP2*, Algorithm~\ref{alg_qtz} by QTZ* and the extensions \eqref{qtznew} 
(New Quartz) and \eqref{qtzmod} (Modified Quartz) by NQTZ* and MQTZ*, respectively.}}
\label{fig_fxpqtz}
\end{figure}

\redcolor{Despite the main goal of this work being  a theoretical study about convergence 
and complexity of various fixed point type methods, for the sake of completeness, we present here 
a comparison of our methods with the classical one for solving quadratic optimization problems: 
the conjugate gradient algorithm (CG).
Figure \ref{fig_fxpqtzcg} shows the performance of the optimal versions of the algorithms proposed 
in this paper compared with CG, applied to solve the problem \eqref{pd_prob}. On the top we have 
plotted the objective values against the number of iterations, while the bottom pictures 
show the objective values against the cpu time. The numerical experiments indicate that Quartz is 
competitive with CG. While Quartz needs more iterations than CG to converge, it is faster in runtime. This is due to the big difference between 
the effort per iteration of these two algorithms: 
$6dN+5d+9N$ arithmetic operations per iteration for Quartz compared to $4d^2+4N^2+4dN+14d+17N$ for CG.}

\begin{figure}[htbp]
\centering
\includegraphics[scale=0.312]{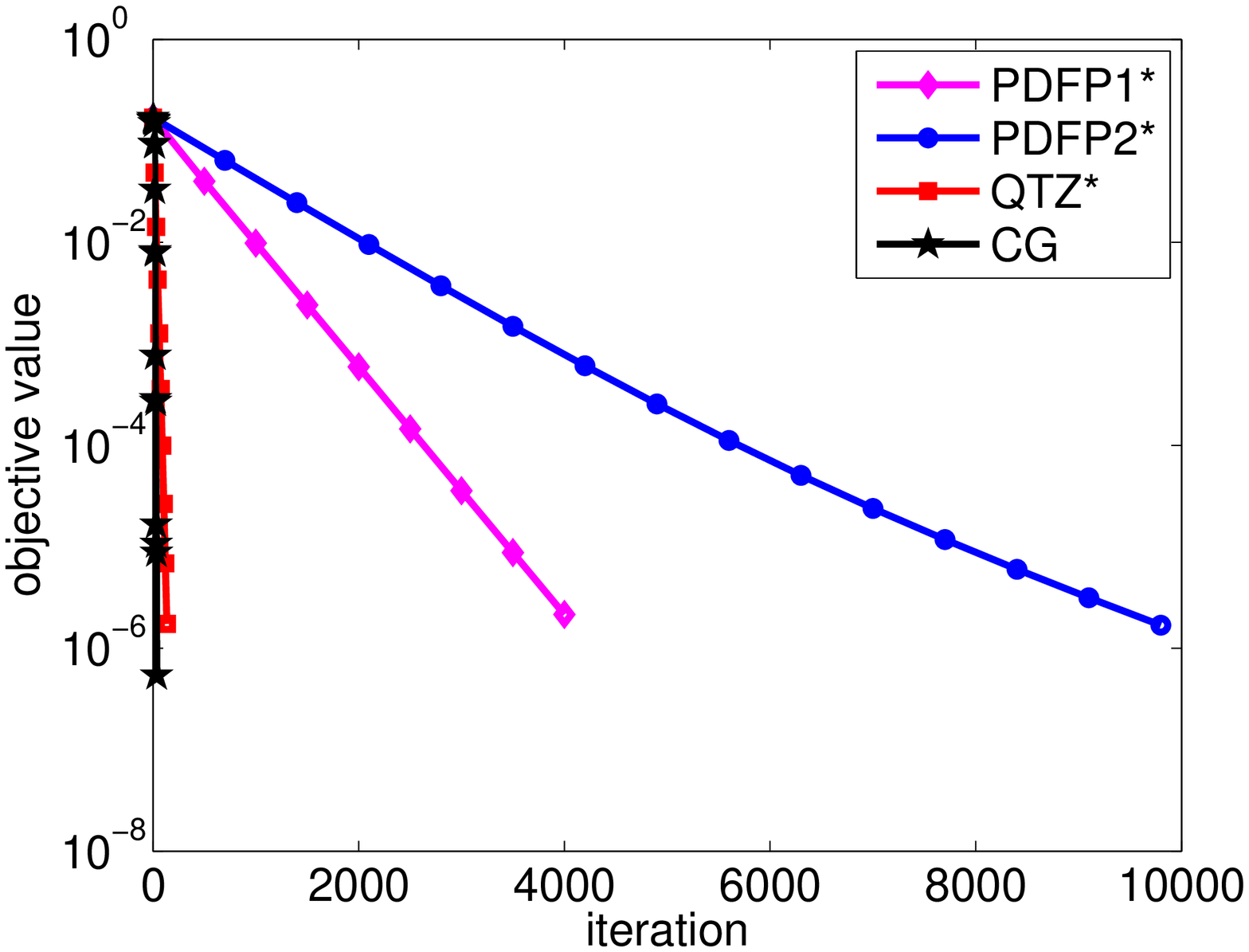}
\includegraphics[scale=0.312]{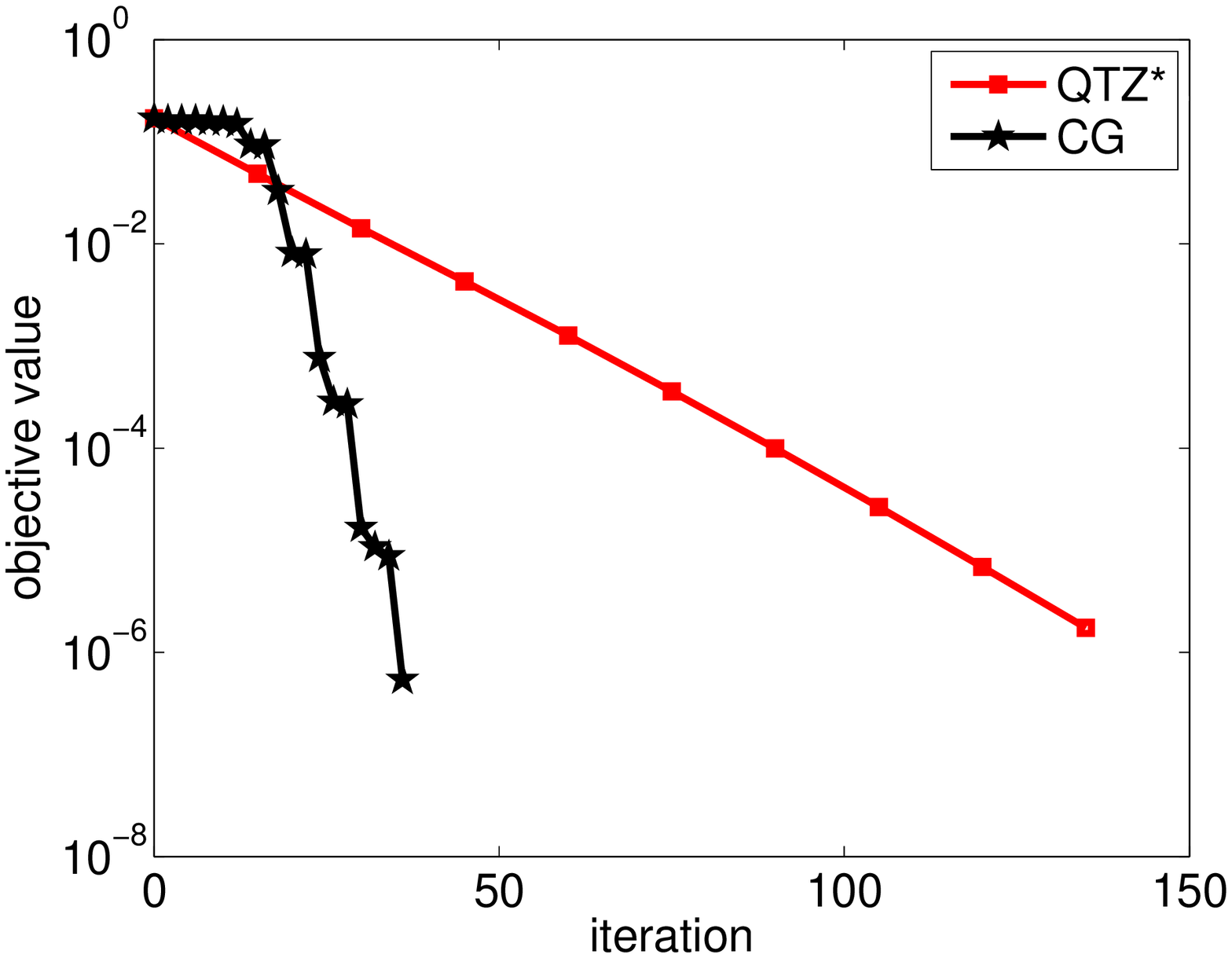}
\includegraphics[scale=0.312]{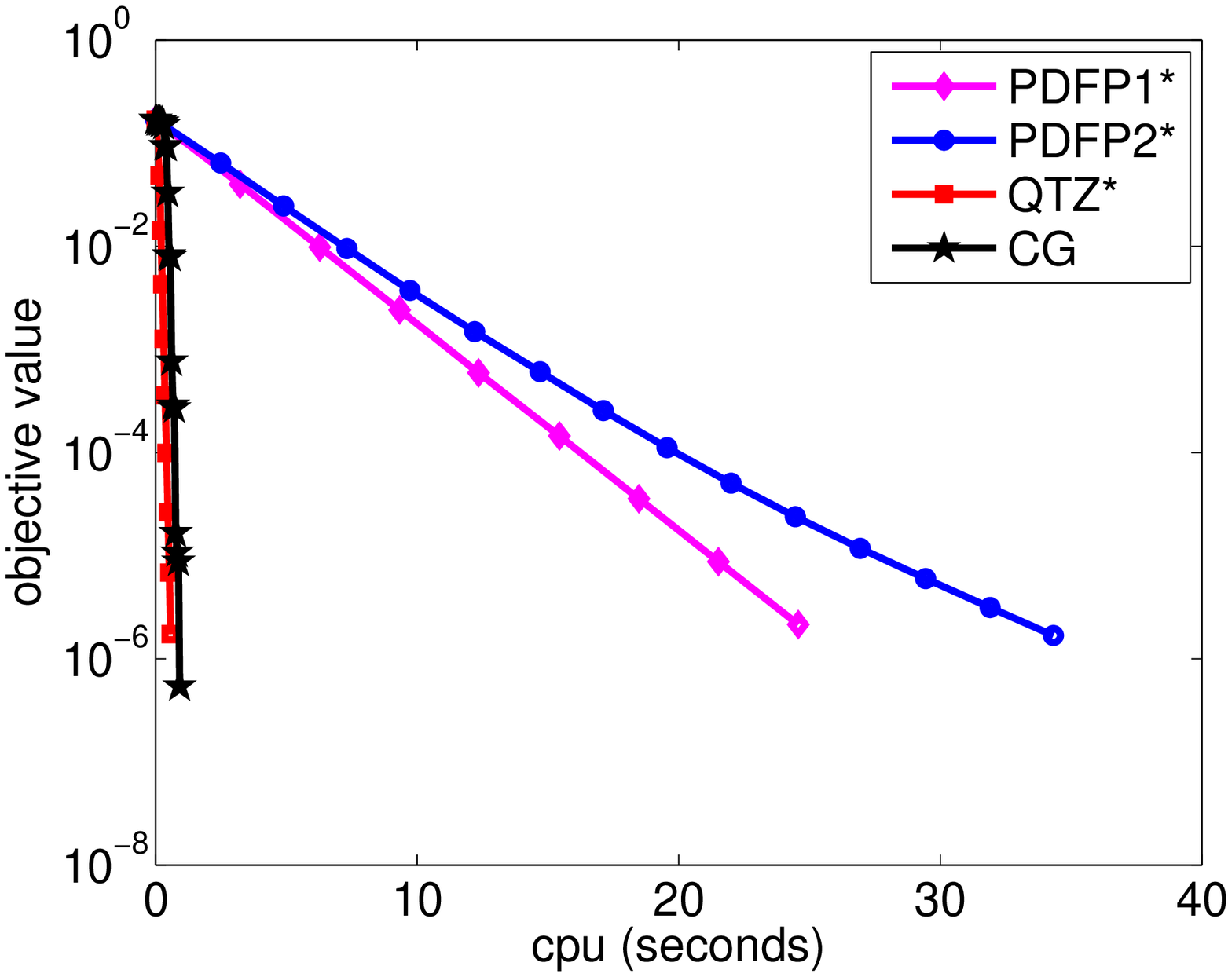}
\includegraphics[scale=0.312]{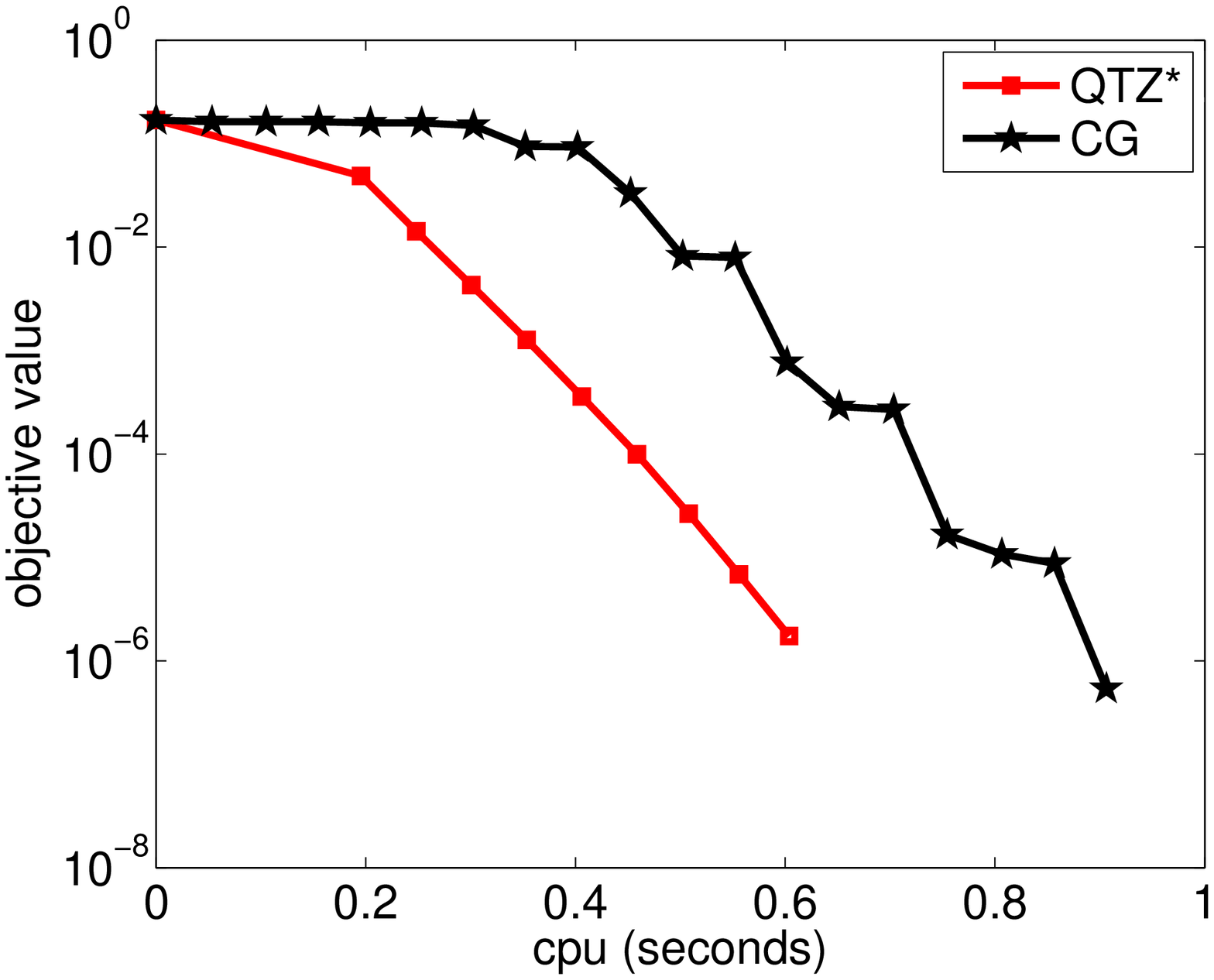}
\caption{Performance of the optimal versions of the algorithms proposed in this paper compared with 
the conjugate gradient algorithm, applied to solve the problem \eqref{pd_prob}. 
The dimensions considered were $d=200$, $m=1$ and $n=5000$. The matrix $A\in\mathbb{R}^{d\times N}$ 
and the vector $y\in\mathbb{R}^{N}$ were randomly generated. For simplicity of notation we have denoted 
Algorithm~\ref{alg_mfp1} by PDFP1*, Algorithm~\ref{alg_mfp2} by PDFP2*, Algorithm~\ref{alg_qtz} by QTZ* 
and conjugate gradient by CG. The pictures on the top show the objective values against the number 
of iterations, while the bottom ones show the objective values against the cpu time. 
The right pictures present the results of QTZ* and CG of the left ones with the horizontal axis rescaled. 
Note that despite QTZ* spent more iterations than CG, the computational time for solving 
the problem was less than that for CG.}
\label{fig_fxpqtzcg}
\end{figure}

\section{Conclusion}
\label{sec_concl} 
In this paper we have proposed and analyzed several algorithms for solving the ridge regression problem and its dual.  We have developed a (parameterized)  family  of 
fixed point methods applied to various equivalent reformulations of the optimality conditions. We have performed a convergence 
analysis and obtained complexity results for these methods, revealing  interesting geometrical insights between convergence speed and spectral properties of iteration matrices. Our main method achieves the optimal 
accelerated  rate of Nesterov. We have performed some numerical experiments to illustrate the properties of our 
algorithms \redcolor{as well as a comparison with the conjugate gradient algorithm. The numerical experiments 
indicate that our main algorithm is competitive with the conjugate gradient algorithm.}







\end{document}